\crefname{hypothesis}{Hypothesis}{Hypotheses}
\crefname{fact}{Fact}{Facts}
\title{
\href{http://orion.math.uwaterloo.ca/~hwolkowi/henry/reports/ABSTRACTS.html}{
Optimal Diagonal Preconditioning Beyond Worst-Case Conditioning: Theory
and Practice of Omega Scaling
}
}
\author{
\href{https://uwaterloo.ca/management-science-engineering/contacts/saeed-ghadimi}{Saeed
Ghadimi}\thanks{Department of Management Science and Engineering, University of
Waterloo, ON, Canada (\email{sghadimi@uwaterloo.ca}).}
\and 
\href{https://uwaterloo.ca/combinatorics-and-optimization/about/people/group/50}{%
Woosuk L. Jung}\thanks{
\href{http://www.math.uwaterloo.ca/co/}{Department of Combinatorics and Optimization}, University of Waterloo,  ON, Canada (\email{w2jung@uwaterloo.ca}, \email{a3sujana@uwaterloo.ca}, {hwolkowicz@uwaterloo.ca}).}
 \and
 \href{https://asujanani6.github.io/}
 {Arnesh Sujanani}\footnotemark[2]
 \and
\href{https://cvnet.cpd.ua.es/curriculum-breve/es/torregrosa-belen-david/110216}{%
David Torregrosa-Bel\'en}\thanks{
\href{https://dmat.ua.es/en/}{Department of Mathematics}, University of Alicante, Alicante, Spain (\email{david.torregrosa@ua.es}).} \and
\href{https://www.math.uwaterloo.ca/~hwolkowi/}
{Henry Wolkowicz}\footnotemark[2]
}
\DeclareMathOperator{\diag}{diag}
\definecolor{OliveGreen}{rgb}{0,0.6,0}
\definecolor{tempblue}{RGB}{36, 56, 231 }
\definecolor{tablegray}{RGB}{215, 219, 221 }
\numberwithin{equation}{section}  
\numberwithin{table}{section}
\numberwithin{figure}{section}
\def\R{\mathbb{R}}
\def\Rpp{\R_{++}}
\def\Sc{\mathbb{S}}
\def\Sn{\Sc^n}
\def\Snp{\Sc_+^n}
\def\Snpp{\Sc_{++}^n}
\def\Rn{\mathbb{R}^n}
\def\Rnm{\mathbb{R}^{n-1}}
\def\Rnp{\mathbb{R}_+^n}
\def\Rnpp{\mathbb{R}_{++}^n}
\def\cref#1{{\normalfont(\ref{#1})}}
\def\cref#1{{\normalfont(\ref{#1})}}
\newtheorem{linesrch}[theorem]{LineSearch}
\newtheorem{prop}[theorem]{Proposition}
\newtheorem{thm}[theorem]{Theorem}
\newtheorem{cor}[theorem]{Corollary}
\crefname{thm}{Theorem}{Theorems}
\Crefname{thm}{Theorem}{Theorems}
\crefname{problem}{Problem}{Theorems}
\Crefname{problem}{Problem}{Theorems}
\Crefname{assump}{Assumption}{Theorems}
\crefname{assump}{Assumption}{Theorems}
\Crefname{linesrch}{LineSearch}{Theorems}
\crefname{linesrch}{LineSearch}{Theorems}
\crefname{conjecture}{Conjecture}{Theorems}
\Crefname{conjecture}{Conjecture}{Theorems}
\crefname{prop}{Proposition}{Propositions}
\Crefname{prop}{Proposition}{Propositions}
\crefname{cor}{Corollary}{Corollaries}
\Crefname{cor}{Corollary}{Corollaries}
\crefname{lem}{Lemma}{Lemmas}
\Crefname{lem}{Lemma}{Lemmas}
\crefname{lemma}{Lemma}{Lemmas}
\Crefname{lemma}{Lemma}{Lemmas}
\crefname{defn}{definition}{definitions}
\Crefname{defn}{Definition}{Definitions}
\crefname{conj}{Conjecture}{Conjectures}
\Crefname{conj}{Conjecture}{Conjectures}
\crefname{remark}{Remark}{Remarks}
\Crefname{remark}{Remark}{Remarks}
\crefname{rmk}{Remark}{Remarks}
\Crefname{rmk}{Remark}{Remarks}
\crefname{example}{Example}{Examples}
\Crefname{example}{Example}{Examples}
\crefname{algorithm}{Algorithm}{Algorithms}
\Crefname{algorithm}{Algorithm}{Algorithms}
\crefname{align}{}{}
\Crefname{align}{}{}
\crefname{equation}{}{}
\Crefname{equation}{}{}
\newcommand{\textdef}[1]{\textit{#1}\index{#1}}
\newcommand{\cI}{{\mathcal I} }
\newcommand{\cB}{{\mathcal B} }
\newcommand{\cE}{{\mathcal E} }
\newcommand{\cD}{{\mathcal D} }
\newcommand{\CV}{\textbf{CV}\,}
\newcommand{\CVp}{\textbf{CV}}
\newcommand{\Mn}{{\mathcal M}^n}
\newcommand{\A}{{\mathcal A}}
\newcommand{\bbm}{\begin{bmatrix}}
\newcommand{\ebm}{\end{bmatrix}}
\newcommand{\bem}{\begin{pmatrix}}
\newcommand{\eem}{\end{pmatrix}}
\newcommand{\beq}{\begin{equation}}
\newcommand{\beqs}{\begin{equation*}}
\newcommand{\bet}{\begin{table}}
\newcommand{\eeq}{\end{equation}}
\newcommand{\eeqs}{\end{equation*}}
\newcommand{\beqr}{\begin{eqnarray}}
\DeclareMathOperator{\cDo}{{\mathring{\mathcal D}} }
\DeclareMathOperator{\cVo}{{\mathring{\mathcal V}} }
\DeclareMathOperator{\nul}{null}
\DeclareMathOperator{\range}{range}
\DeclareMathOperator{\adj}{{adj}}
\DeclareMathOperator{\Blkdiag}{{Blkdiag}}
\DeclareMathOperator{\Diag}{{Diag}}
\DeclareMathOperator{\spanl}{{span}}
\DeclareMathOperator{\conv}{{conv}}
\newcommand{\nc}{\newcommand}
\nc{\arrow}{{\rm arrow\,}}
\nc{\Arrow}{{\rm Arrow\,}}
\nc{\BoDiag}{{\rm B^0Diag\,}}
\nc{\bodiag}{{\rm b^0diag\,}}
\nc{\Mm}{{\mathcal M}^{m} }
\nc{\Mmn}{{\mathcal M}^{mn} }
\nc{\Mnr}{{\mathcal M}_{nr} }
\nc{\Mnmr}{{\mathcal M}_{(n-1)r} }
\nc{\kwqqp}{Q{$^2$}P\,}
\nc{\kwqqps}{Q{$^2$}Ps}
\nc{\notinaho}{(X,S)\in \overline{AHO}(\A)}
\nc{\inaho}{(X,S)\in AHO(\A)}
\newcommand{\bea}{\begin{eqnarray}}%
\newcommand{\eea}{\end{eqnarray}}%
\newcommand{\beas}{\begin{eqnarray*}}%
\newcommand{\eeas}{\end{eqnarray*}}%
\newcommand{\Rnn}{\R^{n \times n}}%
\newcommand{\Int}{{\rm int\,}}
\newcommand{\clo}{{\rm cl\,}}
\newcommand{\Hnp}[1][]{\,\mathbb{H}_+^{\ifthenelse{\equal{#1}{}}{n}{#1}}}
\newcommand{\Hn}[1][]{\,\mathbb{H}^{\ifthenelse{\equal{#1}{}}{n}{#1}}}
\newcommand{\Hk}[1][]{\,\mathbb{H}^{\ifthenelse{\equal{#1}{}}{k}{#1}}}
\newcommand{\Dn}[1][]{\,\mathbb{D}^{\ifthenelse{\equal{#1}{}}{n}{#1}}}
\newcommand{\cQ}{{\mathcal Q} }
\newcommand{\cM}{{\mathcal M} }
\newcommand{\cR}{{\mathcal R} }
\DeclareMathOperator*{\argmin}{argmin}
\setlist{topsep=2pt,itemsep=1pt,parsep=0pt,partopsep=0pt,leftmargin=*}
\def\thm@space@setup{%
  \thm@preskip=4pt plus 1pt minus 1pt%
  \thm@postskip=4pt plus 1pt minus 1pt%
}
\title{
\href{http://orion.math.uwaterloo.ca/~hwolkowi/henry/reports/ABSTRACTS.html}{
Optimal Diagonal Preconditioning Beyond Worst-Case Conditioning: Theory
and Practice of Omega Scaling
}
   \footnote{
Emails resp.: sghadimi@uwaterloo.ca, w2jung@uwaterloo.ca,
a3sujana@uwaterloo.ca, david.torregrosa@ua.es,
hwolkowicz@uwaterloo.ca}
}
\author{
\href{https://uwaterloo.ca/management-science-engineering/contacts/saeed-ghadimi}{Saeed
Ghadimi}\thanks{Department of Management Science and Engineering, University of
Waterloo, ON, Canada}
\and 
\href{https://uwaterloo.ca/combinatorics-and-optimization/about/people/group/50}{%
Woosuk L. Jung}\thanks{
\href{http://www.math.uwaterloo.ca/co/}{Department of Combinatorics and Optimization}, University of Waterloo,  ON, Canada}
 \and
 \href{https://asujanani6.github.io/}
 {Arnesh Sujanani}\footnotemark[3]
 \and
\href{https://cvnet.cpd.ua.es/curriculum-breve/es/torregrosa-belen-david/110216}{%
David Torregrosa-Bel\'en}\thanks{
\href{https://dmat.ua.es/en/}{Department of Mathematics}, University of Alicante, Alicante, Spain} \and
\href{https://www.math.uwaterloo.ca/~hwolkowi/}
{Henry Wolkowicz}\footnotemark[3]
}
\date{
Revising as of \today, \currenttime
}
\bibsep\setlength{\bibsep}{0pt}\fi
\begin{document}
\nolinenumbers
\maketitle

{\bf Key words and phrases:}
$\kappa, \omega$-condition numbers,
diagonal preconditioning, iterative methods, PCG, linear systems,
eigenvalues


%
%

\begin{abstract}
Preconditioning is essential
in many areas of mathematics, and in particular
is a fundamental tool for accelerating
iterative methods for solving linear systems.
In this work, we study optimal diagonal preconditioning
under two distinct notions of conditioning:
the classical worst-case $\kappa$-condition number
and the averaging-based $\omega$-condition number.
We observe that $\omega$-optimal preconditioning generally
outperforms $\kappa$-optimal preconditioning.

For the $\kappa$-optimal preconditioning problem,
we derive an affine-based pseudoconvex
reformulation with three key advantages: all
stationary points are global minima, subgradients
are inexpensive to compute, and the optimization
variable is an $n$-dimensional vector rather
than an $n\times n$ matrix
as in semidefinite programming (SDP)
approaches. 
We develop a simple and highly efficient subgradient method, with convergence guarantees, 
for solving this pseudoconvex formulation. Numerical results indicate that our approach is substantially more 
scalable, efficient, and accurate than existing SDP-based methods, often 
achieving dramatic speedups.

For the $\omega$-condition number,
we provide 
explicit characterizations 
of optimal
diagonal and block diagonal preconditioners.
In particular, we show that
several classical preconditioners, including Jacobi
and row/column normalization, are $\omega$-optimal,
and that matrix balancing schemes 
monotonically
reduce $\omega$ and converge to stationary points
of the two-sided problem.
To the best of our knowledge, this is the first
unified and explicit characterization of 
optimality conditions for both $\kappa$ and $\omega$-based
preconditioning.

Our numerical experiments further reveal a striking 
phenomenon: although $\kappa$-optimal
preconditioners achieve stronger reductions
in the worst-case
condition number,
$\omega$-optimal preconditioners are substantially
cheaper to compute and yield
better performance for iterative
methods such as preconditioned conjugate gradient (PCG) and 
least squares method (LSQR).
Moreover, applying $\omega$-optimal scaling to linear 
systems that are already $\kappa$-optimally
preconditioned leads
to further improvements in PCG iterations.
These results suggest that the
$\omega$-condition number is
more predictive of practical 
solver performance 
and highlight 
the advantages of 
$\omega$-based preconditioning in large-scale settings.

\end{abstract}

\section{Introduction}
\label{sec:intro}

Preconditioning plays a central role in accelerating iterative methods
for large-scale linear systems, 
e.g.,~\cite{franceschini2018recent,gao2024gradientmethodsonlinescaling,gao2023scalable,doi:10.1287/opre.2022.0592}.
All of these works focus on studying the classical $\kappa$-condition number of a matrix 
$A$ (the ratio of largest to smallest singular values). On the other hand,
another line of research has focused on studying the $\omega$-condition number,
which is the ratio of the arithmetic and geometric mean of singular
values,~\cite{JungTorrWolkOmega2024,DeWo:90}.
Although $\kappa$-optimal preconditioners minimize the worst-case condition
number, our results show that $\omega$-optimal preconditioners are cheaper to
compute and are more predictive of PCG/LSQR performance.

Our work is divided into three parts. (i): First, motivated by the recent work 
in~\cite{gao2023scalable,doi:10.1287/opre.2022.0592} that evaluates
$\kappa$-optimal diagonal preconditioners using convex semidefinite programming
relaxations, we provide a different approach based on
using the nonconvex formulation with $\kappa$. 
In particular, we provide a modified model that
exploits the invariance of eigenvalues for a product of matrices and
solves an essentially unconstrained pseudoconvex minimization problem.
Our approach is significantly
more efficient and robust, often
achieving speedups of over
100x compared with the methods in \cite{gao2023scalable,doi:10.1287/opre.2022.0592}. (ii): Second, we consider properties of
the $\omega$-condition number.
In particular, we illustrate how to obtain explicit formulae for $\omega$-optimal 
diagonal and block-diagonal preconditioners.
In both settings, we obtain either known
or new preconditioners,
including the Jacobi preconditioner, column and row normalization preconditioners, and those obtained via matrix balancing algorithms such as Sinkhorn--Knopp.
Compared to $\kappa$-optimal preconditioners, the $\omega$-optimal counterparts 
are substantially cheaper 
to compute while retaining strong practical effectiveness.
(iii): Finally, in our computational experiments we demonstrate the robustness and efficiency
of our algorithms for finding $\kappa$-optimal preconditioners. We also
demonstrate major advantages of using $\omega$-optimal preconditioners for 
iteratively solving linear systems using preconditioned conjugate gradient (PCG) for positive definite systems
and least squares method (LSQR) for general invertible systems.  In general, we find 
that reductions in the $\omega$-condition number are more strongly correlated with improvements in LSQR and PCG iteration counts than reductions in the $\kappa$-condition number.

For simplicity, we restrict our comparisons to diagonal and block
diagonal preconditioning for large scale sparse
positive definite and general invertible linear systems.
The positive definite case arises from many diverse applications
including: finite element analysis, sparse regression, Newton type
optimization algorithms, and also from
e.g.,~in~\cite{geli:81,gao2023scalable}: (i) the 
so-called normal equations from interior point methods in solving linear
programs and (ii) the Hessians in minimizing logistic regression.

The $\kappa$-condition number and $\omega$-condition number for 
the positive definite case $M\succ 0$ are, respectively, 
given by ratios of eigenvalues 
\begin{equation}
\label{eq:kappaomegadef}
\textdef{$\kappa(M) = \frac {\lambda_{\max}(M)}{\lambda_{\min}(M)}$};\quad 
\textdef{$\omega(M) = \frac {\trace(M)/n}{\det(M)^{1/n}}$}= \frac
{\sum_i \lambda_i(M)/n}{\prod_i(\lambda_i(M))^{1/n}}. 
\end{equation}
For our purposes we extend these definitions in~\cref{eq:kappaomegadef} using
eigenvalues, rather than singular values, to nonsymmetric
matrices with real positive eigenvalues allowing for simplified models
for minimizing $\kappa$. Indeed,
$\kappa$ and $\omega$ can be considered as \emph{worst-case} and
\emph{average-case} condition numbers, respectively. In fact, it is known that 
the ratio of the arithmetic to geometric mean is approximately the \textdef{coefficient of variation, \CVp}; namely, the ratio between the standard deviation and the mean of a distribution; when
this quantity is close to zero.\footnote{The Young--Trent 
formula~\cite{d687d7d6-f7f0-360e-8186-5a92edcffeb7}
establishes $GM/AM \approx 1 -\frac 12\CVp^2$. Moreover, Taylor's approximation of $1/(1+x)$ around zero yields $AM/GM \approx 1 + \frac 12\CVp^2$.}
And, one of the big advantages of $\omega$ over $\kappa$ is 
that finding explicit formulae for optimal
preconditioners by minimizing $\omega$ with special structure is often
possible due to the analyticity and simplicity of differentiation
of trace and determinant.
\index{$\Snpp$, cone of positive definite matrices}
\index{\CVp, coefficient of variation}

\subsection{Main Contributions}

The main contributions of this paper consist of both theoretical and numerical analyses 
of the $\kappa$ and $\omega$-condition numbers.
First, we provide an affine based
pseudoconvex reformulation of the $\kappa$-optimal diagonal preconditioning problem that
allows for an elegant characterization of its optimality conditions. There
are three advantages of this reformulation: all its stationary points
are global minima, its subgradients are inexpensive to compute, and its optimization variable is 
just an $n\times 1$ vector rather than a $n\times n$ matrix 
as in \textdef{semidefinite programming, SDP}~\cite{doi:10.1287/opre.2022.0592}.
\index{SDP, semidefinite programming} Our extensive numerical experiments
show that subgradient methods based on our reformulation
can effectively converge to a $\kappa$-optimal diagonal preconditioner
more efficiently in time and accuracy than current SDP-based approaches in the
literature \cite{gao2023scalable,doi:10.1287/opre.2022.0592}. 

Second, we provide and exploit the ability to
find explicit formulae for $\omega$-optimal diagonal and block diagonal preconditioners. 
We demonstrate that many popular classic preconditioners,
such as the Jacobi preconditioner and row/column normalization
preconditioners are $\omega$-optimal
preconditioners. We
also display that matrix balancing schemes reduce $\omega$
at every iteration and converge to a stationary point
of the two-sided $\omega$-optimal diagonal preconditioning problem.
To the
best of our knowledge, this is the first time that such a comprehensive
characterization of optimality conditions for both condition numbers is
provided. 

Finally, we conduct additional extensive numerical experiments that
compare the efficiency of the $\kappa$- and $\omega$-condition numbers.
Our results show that since $\omega$-optimal preconditioners typically have
closed-form solutions, they are much
cheaper to construct than $\kappa$-optimal preconditioners. 
Our results further demonstrate that PCG iterations and LSQR
iterations for solving linear systems are, on average, more correlated with
the $\omega$-condition number than the $\kappa$-condition number.

We now continue with the organization of the paper and more details on
the main contributions.
In~\Cref{sect:otptheory}, we present several formulations for
minimizing $\kappa$ along
with the derivatives and optimality conditions. The formulations
are presented to take advantage of the affine approach and then avoid
the positive homogeneity of the problem in order to improve stability of
the problem. In particular, we include~\Cref{thm:optcondkappa} 
that presents a characterization of a 
$\kappa$-optimal diagonally preconditioned $A$ that is based on the
\emph{largest/smallest} eigenpairs. 
This leads to an efficient subgradient algorithm and we
include convergence results. 
In \Cref{sect:optomegadiagproc},
we characterize $\omega$-optimal 
preconditioners with special structures.
A characterization for a $\omega$-optimal
diagonal
preconditioner is
simple as it corresponds to the classical Jacobi 
preconditioner for symmetric positive definite matrices
and the row/column normalization preconditioner 
for general nonsingular matrices. We also show
that the matrix balancing algorithm, Sinkhorn--Knopp,
linearly converges to a stationary point of the two-sided $\omega$-optimal diagonal preconditioning problem.
Finally, we include results on 
extending from diagonal to block
diagonal preconditioning in~\Cref{sect:optblockdiag}. 
We show that the right-sided 
$\omega$-optimal block-diagonal preconditioner for
tall full-rank matrices $A$ comes from 
taking Q-less QR decompositions of the blocks of $A$.
We also give a characterization for the left-sided 
$\omega$-optimal block-diagonal preconditioner and 
show that when $A$ is square that it is related
to applying the right-sided optimal preconditioner
to $A^{T}$.


Extensive numerical tests are then conducted in~\Cref{sect:numerics}.
We demonstrate that our subgradient methods are more scalable and efficient 
at computing $\kappa$-optimal
diagonal preconditioners 
than existing SDP-based approaches in the 
literature \cite{gao2023scalable,doi:10.1287/opre.2022.0592}.
Moreover, the resulting preconditioners
yield more 
substantial improvements in PCG performance for
solving linear systems 
compared to those produced by \cite{gao2023scalable,doi:10.1287/opre.2022.0592}.

We further compare optimal
diagonal preconditioning strategies based on the condition numbers
$\kappa$ and $\omega$.
In particular, we observe that the
Sinkhorn--Knopp algorithm, which 
converges to a stationary point of the two-sided $\omega$-optimal preconditioning problem,
produces 
preconditioners that significantly outperform
the two-sided $\kappa$-optimal method 
of \cite{doi:10.1287/opre.2022.0592} in terms of LSQR iteration counts for 
minimizing $\|b-Ax\|$.
While Sinkhorn--Knopp
dramatically reduces $\omega$,
the method of \cite{doi:10.1287/opre.2022.0592} achieves a stronger
reduction in $\kappa$.
This is notable as despite this difference, Sinkhorn--Knopp yields better
LSQR performance on the majority of instances
while also being computationally cheaper.

Finally, we demonstrate
that applying $\omega$-optimal diagonal
scaling to positive definite linear systems
that are already $\kappa$-optimally scaled
leads to significant improvements in PCG 
performance. Overall,
our results highlight two 
key advantages of $\omega$-optimal preconditioners:
they are inexpensive to compute and
substantially reduce the iteration counts of iterative methods.

\subsection{Background; Preliminaries}
\index{$A\in \Mn$ nonsingular}
Given a linear system $Ax=b$ with $A\in \Mn, n\times n$ matrix, nonsingular, 
and $A \approx P_1P_2$, preconditioning effectively solves the given system
by solving the following system for $y$:
\begin{equation}
\label{eq:precondPinvAPinv}
P_1^{-1}A P_2^{-1}y=P_1^{-1}b \text{  for  } y, \,\, 
\,\, x=P_2^{-1}y.
\end{equation}
It is assumed that the solutions with $P_1, P_2$ are
inexpensive.\footnote{In some of our theoretical work 
for notational convenience, we use $D\leftarrow P^{-1}$, below.}
Surveys are given in
e.g.,~\cite{MR3349311,benzi2002preconditioning,ciaramella2022iterative,MR1990645}
and the references therein.
There is no matrix-matrix multiplication in preconditioning algorithms,
such as PCG,
as they do not form the products explicitly; and only matrix-vector
multiplications/divisions are performed. For example, the well known
\textdef{Jacobi preconditioner} uses the diagonal $d=\diag(A)$, if it is
not zero, and the diagonal matrices $P_2=\Diag(d), P_1=I$.

A discussion and references on the two condition numbers $\kappa,\omega$
is given recently in \cite{JungTorrWolkOmega2024}.
The matrix nearness problem $\min_P \| I - AP\|_F$ is often  used to
find preconditioners $P$. This also arises in the derivation of
quasi-Newton methods.
The $\omega$-condition number is introduced in \cite{DeWo:90} as a measure for
nearness to the identity using $\min_{P\succ 0} \omega(MP)$
for finding optimal
quasi-Newton updates; see also~\cite{qz,WoZh:93,DoanW:11}. 
It is related to the measure $\trace A -\log \det A$ used in the 
convergence proofs in~\cite{ByrNoc:89,ByrNocYua:87}.
Then,  Kaporin \cite{MR1277801} used $\omega$  to derive new conjugate
gradient convergence rate estimates and guarantees. More recently
\cite{bock2025connectingkaporinsconditionnumber} 
presents further relationships and convergence analyses.
As mentioned above, it is known that
the ratio of AM to GM is approximately the 
\textdef{coefficient of variation, \CV}, 
$AM/GM \approx 1+\frac 12 CV^2$ \cite{d687d7d6-f7f0-360e-8186-5a92edcffeb7}. 
When the data is log-normal, then we get the exact relation $AM/GM=\sqrt{1+CV^2}$.
Thus by reducing CV,
$\omega$ promotes clustering of eigenvalues/singular values which
is essential in convergence in PCG type methods,
e.g.,~\cite{knyazev2001toward,MR98j:65023}.

Note that for invertible $A\in \Mn$, \cite{doi:10.1287/opre.2022.0592} uses
the former in $\sqrt{\kappa(A^TA)}=\kappa(A)$ as both involve largest 
to smallest singular values. In fact, as emphasized 
in~\cite{JungTorrWolkOmega2024}, $\kappa(A) = \kappa(A^{-1})$ and it is
the latter that is the operation for solving linear systems.
However, these statements are are not true for $\omega$, as the
numerator being the sum of singular values, the nuclear norm, is not
equivalent to $\sqrt{\trace(A^TA)}$ and $\omega$ is not inverse
invariant. In this paper,
for $\omega$, we work with general invertible $A$ and then use
$\omega(A^TA)$, i.e.,~use the ratios of eigenvalues of $A^TA$.

For certain structures in $A$, in contrast to $\kappa$, one can exploit
the simple derivatives of $\trace,\det$ in  $\omega$ and obtain explicit
formulae for the optimal preconditioner that in fact coincide with
heuristics in the literature, see~\cite{JungTorrWolkOmega2024}. 
Thus no optimization algorithm is needed to obtain the optimal
preconditioner. For example, if we restrict to a diagonal
preconditioner $D=\diag(d)$, then the classic Jacobi 
preconditioner is a multiple of the $\omega$-optimal diagonal preconditioner. 
If we restrict to a diagonal structure along with a partial upper
triangular part size $k$, 
then the diagonal part again corresponds to the Jacobi
preconditioner, while the upper triangular part 
$D_{ij}, i\leq j\leq k$, yields exactly the
formula for the Cholesky factorization.
(see~\Cref{prop:blckdiaf} for more details).
Both these observations highlight the
close connections $\omega$-optimal preconditioners have 
with classical heuristic preconditioners.

\subsubsection{Notation} \label{sect:notation}
\index{$\Mn$, matrices order $n$}
\index{$\Sn$, symmetric matrices order $n$}
\index{$\Snp$, cone of positive semidefinite matrices}
\index{$\Snpp$, cone of positive definite matrices}
\index{matrices order $n$, $\Mn$}
\index{symmetric matrices order $n$$, \Sn$}
\index{cone of positive semidefinite matrices, $\Snp$}
\index{cone of positive definite matrices, $\Snpp$}
We work in \emph{real} Euclidean vector spaces.
We let $\Sn$ denote the space of symmetric matrices with the trace
inner product and corresponding Frobenius norm; $\Snp,\Snpp$ are the
cones of positive semidefinite, and definite, symmetric matrices of order $n$,
respectively; denoted $S\succeq 0, S\succ 0$, respectively.
We let $\Mn$ denote the
space of square matrices of order $n$, also equipped with the trace inner
product and Frobenius norm. We only work with real matrices in  this
paper. Throughout, we let $A \in \Mn$ be a nonsingular matrix. 
Hence, \textdef{$M=A^TA\succ 0$}.

\index{$\kappa(A)$, classical condition number}
In the literature, for $A\in\Mn$, the \textdef{classical condition number,
$\kappa(A)$}$ = \|A\|\|A^{-1}\| = \frac {\max \sigma_i(A)}{\min
\sigma_i(A)}$, i.e.,~the ratio of largest to smallest singular values.

For $B\in \Mn$ with real eigenvalues we let
\[
\lambda_i = \lambda_i(B): \quad 
\textdef{$\lambda_{\max}=\lambda_1$}\geq \lambda_2\geq \ldots \geq
\textdef{$\lambda_{\min} = \lambda_n$},
\]
be the eigenvalues in nonincreasing order, and we define the functions:
\textdef{$\lambda_1(B) = \max_i\lambda_i(B)$};
\textdef{$\lambda_n(B) = \min_i\lambda_i(B)$}. For our purposes, and with
abuse of notation,
we define the $\kappa$-condition number for \emph{matrices with real
positive eigenvalues}, $B$ \emph{not} necessarily symmetric, as
\textdef{$\kappa(B)= \frac {\lambda_{\max}(B)}{\lambda_{\min}(B)}$}. 
We note that the
product of two positive definite matrices has real positive eigenvalues,
though it is not necessarily normal and not necessarily diagonalizable.
We note
that in this case we also have
\textdef{$\omega(B)= \frac{\sum_{i=1}^n \lambda_i(B)/n}{\prod_{i=1}^n \lambda_i(B)^\frac{1}{n}}$}.
We let $I_{n}\in \Mn$ be the identity matrix and let
$e_n\in \Rn$ be the vector of ones. 
We often use
the following matrix for a basis for the orthogonal complement
$e_n^\perp$,
\begin{equation}\label{eq:defV}
\textdef{$V =
\frac 1{\sqrt 2}\begin{bmatrix} I_{n-1} \cr -e_{n-1}^T \end{bmatrix}$}, \quad
\range(V) = e_{n}^\perp.
\end{equation}
We use $X\bullet Y$ to denote the \emph{Hadamard
product} (elementwise product) of two (compatible)
matrices. For a vector $d\in \Rn$,
\textdef{$\Diag(d)\in \Sn$} denotes the diagonal matrix formed using $d$.
The adjoint linear transformation for $S\in \Sn$ is denoted
$\Diag^*(S)=\,$\textdef{$\diag(S)\in \Rn$}.
For two compatible functions $f$ and $g$, we let $f\circ g$ 
denote the composite function.  
\index{orthogonal complement, $e_n^\perp$}
\index{$e_n^\perp$, orthogonal complement}
\index{$X\bullet Y$, Hadamard product}
\index{Hadamard product, $X\bullet Y$} 
\index{$f\circ g$, composite function}
\index{composite function, $f\circ g$}

The
classical Fenchel subdifferential of a convex function $h$
is defined as
\begin{equation}\label{Fenchel subdifferential}
\textdef{$\partial h(x):= \left\{v:\langle v, y-x\rangle \leq h(y)-h(x), \quad \forall y\right\}$}.
\end{equation}
The Clarke subdifferential of a locally Lipschitz,
not necessarily convex, function $h$ is defined as 
\begin{equation}\label{Clarke subdifferential}
\partial_{C}h(x)=\conv\left\{s:\exists x^k \rightarrow x, \nabla h(x^k)
\textrm{ exists, and } \nabla h(x^k)\rightarrow s \right\},
\end{equation}
where $\conv$ denotes the convex hull of a set. It is well known that the Fenchel subdifferential
and the Clarke subdifferential coincide for convex functions. Further notation is introduced below as needed.
\index{$\conv$, convex hull}
\index{convex hull, $\conv$}

We now recall the well-known facts about the largest and smallest
eigenvalues of symmetric matrices and include a proof for completeness
as it is useful further below.
\begin{lemma}
\label{lem:lam1nconv}
The maximum and minimum eigenvalue functions on $\Sn$,
$\lambda_{\max},\lambda_{\min}: \Sn \to \R$,
are convex, concave, respectively.
\end{lemma}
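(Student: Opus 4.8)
The plan is to use the variational (Rayleigh--Ritz) characterization of the extreme eigenvalues. First I would recall that for $A\in\Sn$ the spectral theorem yields
\[
\lambda_{\max}(A)=\max_{\|x\|_2=1} x^TAx, \qquad \lambda_{\min}(A)=\min_{\|x\|_2=1} x^TAx,
\]
which follows by diagonalizing $A=Q\Diag(\lambda)Q^T$ with $Q$ orthogonal and observing that, writing $y=Q^Tx$, the quantity $x^TAx=\sum_i \lambda_i y_i^2$ is a convex combination of the eigenvalues as $y$ ranges over the unit sphere, hence its maximum/minimum over $\|y\|_2=1$ is $\lambda_{\max}/\lambda_{\min}$.

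The key observation is then that for each \emph{fixed} unit vector $x$, the map $A\mapsto x^TAx=\langle xx^T, A\rangle$ is linear in $A$ (with respect to the trace inner product on $\Sn$). Consequently $\lambda_{\max}$ is a pointwise supremum of a family of linear, in particular convex, functions of $A$ indexed by the unit sphere; since a pointwise supremum of convex functions is convex, $\lambda_{\max}$ is convex on $\Sn$. For $\lambda_{\min}$ I would argue symmetrically: it is the pointwise infimum of the same family of linear (hence concave) functions, so it is concave. Alternatively, and more compactly, one can use the identity $\lambda_{\min}(A)=-\lambda_{\max}(-A)$ together with linearity of $A\mapsto -A$ to deduce concavity of $\lambda_{\min}$ directly from convexity of $\lambda_{\max}$.

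There is essentially no serious obstacle in this argument; it is a textbook fact. The only points deserving a sentence of justification are the Rayleigh--Ritz formulas (handled by the orthogonal diagonalization reduction above) and the elementary principle that a supremum of convex functions is convex. I would include the short diagonalization step explicitly for completeness, since, as the excerpt already anticipates, the same reasoning is reused later in the paper.
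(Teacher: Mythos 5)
Your proposal is correct and follows essentially the same route as the paper: both invoke the Rayleigh--Ritz characterization and observe that $\lambda_{\max}$ (resp.\ $\lambda_{\min}$) is a pointwise maximum (resp.\ minimum) of functions linear in the matrix argument, hence convex (resp.\ concave). The extra detail you supply (the diagonalization justification of the Rayleigh quotient and the identity $\lambda_{\min}(A)=-\lambda_{\max}(-A)$) is fine but not needed beyond what the paper records.
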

\begin{proof}
Letting $B \in \Sn$ and using the Rayleigh quotient, we get
$\lambda_{\max}(B) = \max \{ x^T Bx : \|x\|=1\}$,  which is a maximum of linear (so convex) functions in $B$,  and therefore
is convex. We get the concavity part similarly from
$\lambda_{\min}(B) = \min \{ x^T Bx : \|x\|=1\}$.
\end{proof}

Now let 
\[
M\in  \Snpp, d\in \Rpp^n, D=\Diag(d). 
\]
We note the following useful relations that follow by the commutativity 
property for eigenvalues, the fact that similarity transforms preserve the spectrum, and our definitions of $\kappa, \omega$ that
use eigenvalues for products of positive definite matrices:
\label{page:DADequivADD}
\begin{equation}
\label{eq:DADequivADD}
\lambda_i(DMD) = \lambda_i(MDD), \forall i, \quad
\kappa(DMD) = \kappa(MDD),\quad \omega(DMD) = \omega(MDD).
\end{equation}

\section{$\kappa$-Optimal Diagonal Preconditioning}
\phantomsection
\label{sect:otptheory}
As mentioned above, preconditioning (scaling) is important in many areas
of mathematics such as optimization and numerical linear algebra. 
(See e.g.,~the 
surveys~\cite{MR3349311,benzi2002preconditioning,ciaramella2022iterative}). 
In this section we concentrate on 
finding $\kappa$-optimal diagonal preconditioners $D$ for positive definite
linear systems $Mx=b$, where $M=A^{T}A$ and $A\in \Mn$ is nonsingular.
More specificially, we aim to 
find $D$ that minimizes 
$\kappa((AD^{1/2})^TAD^{1/2})=
\kappa(D^{1/2}MD^{1/2})$.\footnote{Note that we are using 
$\kappa(D^{1/2}MD^{1/2})$ and not the inverse 
$D^{-1/2}$ or $D^{-1}$ as is customary, see e.g.,~\cref{eq:precondPinvAPinv}. 
This simplifies the derivatives in the optimization problems.
Moreover, the equivalent expression for $\kappa(D^{1/2}MD^{1/2})$ 
follows from the
invariance of the spectrum after commuting matrices.}

We begin in \Cref{sect:optkappadiag} with the eigenvalue relation in 
\Cref{eq:DADequivADD}. This allows us to work with
the affine mapping $MD$ in order to minimize the classical
nonlinear in $d$ formulation $\kappa(D^{1/2}MD^{1/2}$). Moreover, we
provide explicit gradients and
optimality conditions in \Cref{thm:optcondkappa}.
Note that \cite{gao2023scalable} also considers minimizing $\kappa(D^{1/2}MD^{1/2})$, although they aim to find a diagonal preconditioner that is a convex combination of a small basis or list of diagonal preconditioners. 
Working with this affine mapping allows us to get expressions for derivatives for the
composite functions which are used to design very efficient and scalable algorithms.
In addition, our composite
functions avoid the positive homogeneity in $\kappa,\omega$ thus
improving stability.

\subsection{Optimality Conditions for $\kappa$ Diagonal Preconditioning}
\label{sect:optkappadiag}
Let $M\in\Snpp$. We denote the quadratic type scaling as follows:
\[
\cD: \Rnpp \to \Sn, \quad
\textdef{$\cD(d) = \Diag(d)^{1/2}M\Diag(d)^{1/2}$}.
\]
With abuse of notation, we let the argument determine the function,
\[
\textdef{$\kappa(d) = \kappa(\cD(d)) = (\kappa\circ \cD)(d)$}.
\]
Similarly,
\[
\textdef{$\lambda_{\max}(d) = \lambda_{\max}(\cD(d)) =
(\lambda_{\max}\circ \cD)(d)$}, \quad
\textdef{$\lambda_{\min}(d) = \lambda_{\min}(\cD(d)) = (\lambda_{\min}\circ \cD)(d)$}.
\]
We use the form that is most appropriate/useful depending on the context.
In the literature, e.g.,~\cite{gao2024gradientmethodsonlinescaling,gao2023scalable,doi:10.1287/opre.2022.0592},
$\kappa$-optimal diagonal preconditioning refers to solving
\begin{equation}
\label{eq:optkappaprob}
d^* \in \argmin \left\{ \kappa(\cD(d))=\frac {\lambda_{\max}(\cD(d))} {\lambda_{\min}(\cD(d))} :
        d\in \Rnpp \right\}.
\end{equation}
Here we restrict to $d\in \Rnpp$ as we are taking square roots. 

We show below, that for our purposes, we can use the
equivalent \emph{linear in $d$} transformation
\begin{equation}\label{linear transform}
\cDo : \Rnpp \to \Sn, \,\, \textdef{$\cDo(d) = M\Diag(d)$}.
\end{equation}

With \textdef{$D = \Diag(d)$},
we first note the relationships in the eigenpairs of 
$\cD(d)=D^{1/2}MD^{1/2}$ and the nonsymmetric but similar $\cDo(d)=MD$, and
$\cDo(d)^T=DM$; as well we have the convexity and concavity of the maximum and
minimum eigenvalues of these nonsymmetric matrices $MD$.\footnote{These
are a special case of so-called $K$-pd matrices in the literature,
i.e.,~a product of two symmetric matrices where at least one is positive
definite. The product of two positive definite matrices $P=AB$ arises in
the optimality conditions in semidefinite programming. One of the
difficulties is that the symmetrization of $P$ is \emph{not} necessarily
positive definite, see e.g.,~\cite{ScTuWolk:03}.
}

\begin{lemma}
\label{lem:eigsADconvconc}
Let $M, D=\Diag(d)\in \Snpp$, and $(u_i,\lambda_i),  i=1,\ldots ,n$, be
orthogonal eigenpairs of $\cD(d)$, i.e.,~$\langle u_i,u_j\rangle = 0, \forall
i\neq j$. Define 
\[
x_i=D^{-1/2}u_i, \forall i,\,
 X=\begin{bmatrix}x_1&\ldots&x_n\end{bmatrix}, 
Y= X^{-T}, \, \Lambda =
\Diag(\lambda).
\]
Then $X$ and $Y$ are matrices of right and left eigenvectors of $\cDo(d)$, respectively, with corresponding matrix of eigenvalues $\Lambda$; and $Y$ is given by $Y=DX (X^TDX)^{-1}$.
\\Moreover, $\lambda_{\max}(d)=\lambda_1(d)$ 
(respectively, $\lambda_{\min}(d)=\lambda_n(d)$) is
a convex (respectively, concave) function of $d\in \Rnpp$.
\end{lemma}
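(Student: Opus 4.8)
The plan is to establish the eigenpair correspondence by a direct similarity-type computation, and then to deduce the convexity/concavity claims by transferring them from the symmetric matrix $\cD(d)=D^{1/2}AD^{1/2}$, for which \Cref{lem:lam1nconv} applies. First I would verify the right-eigenvector claim: given $\cD(d)u_i=\lambda_i u_i$, i.e.\ $D^{1/2}AD^{1/2}u_i=\lambda_i u_i$, I left-multiply by $D^{-1/2}$ to get $D^{-1/2}D^{1/2}AD^{1/2}u_i = AD^{1/2}u_i = \lambda_i D^{-1/2}u_i$. Writing $x_i=D^{-1/2}u_i$ this reads $A D x_i = \lambda_i x_i$ (since $D^{1/2}u_i = D^{1/2}D^{1/2}x_i = Dx_i$), so each $x_i$ is a right eigenvector of $\cDo(d)=AD$ with eigenvalue $\lambda_i$. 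Because the $u_i$ are orthonormal and $D^{-1/2}$ is invertible, $X=[x_1\ \cdots\ x_n]$ is invertible, hence $\cDo(d)=X\Lambda X^{-1}$ and the full set of eigenvalues of $AD$ is exactly $\{\lambda_i\}$. The left eigenvectors are then the rows of $X^{-1}$, i.e.\ the columns of $Y=X^{-T}$, which satisfy $Y^T\cDo(d)=\Lambda Y^T$ by construction.

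Next I would derive the stated closed form $Y = DX(X^TDX)^{-1}$. The cleanest route is to note that, since $\{u_i\}$ is orthonormal, $U=[u_1\ \cdots\ u_n]$ satisfies $U^TU=I$, i.e.\ $U=D^{1/2}X$ gives $X^TDX = U^TU = I$; wait — more carefully, $X^TDX = X^T D^{1/2}D^{1/2}X = (D^{1/2}X)^T(D^{1/2}X) = U^TU = I$. So in fact $X^TDX=I$, and the claimed formula reduces to $Y=DX$. One checks directly that $Y^TX = X^TDX = I$, so $Y=X^{-T}$, consistent with the definition; and I would present the formula in the stated form $DX(X^TDX)^{-1}$ since that identity (with the inverse written out) is the form that survives when one does not normalize, and it is also manifestly independent of the choice of orthonormal eigenbasis. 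A short remark that $X^TDX=I$ makes the two forms agree will suffice.

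For the convexity/concavity, the key observation is the similarity $\cDo(d)=AD = D^{-1/2}\big(D^{1/2}AD^{1/2}\big)D^{1/2} = D^{-1/2}\cD(d)D^{1/2}$, which is exactly the commutativity-of-eigenvalues fact already recorded in \Cref{eq:DADequivADD}: $\lambda_i(AD)=\lambda_i(D^{1/2}AD^{1/2})$ for all $i$. Hence $\lambda_{\max}(d)$, viewed as a function of $d\in\Rnpp$ through $\cDo$, equals $\lambda_{\max}\circ\cD$, the composition of the convex function $\lambda_{\max}$ on $\Sn$ (\Cref{lem:lam1nconv}) with the map $d\mapsto D^{1/2}AD^{1/2}$ — but that inner map is \emph{not} affine, so a naive composition argument does not close the gap. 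This is the one genuine obstacle. The fix is to go back to the Rayleigh-quotient representation used in the proof of \Cref{lem:lam1nconv} but applied to $AD$ directly through its symmetrization: for any fixed unit vector $z$, the map $d\mapsto z^T (D^{1/2}AD^{1/2}) z = \sum_{i,j} z_i z_j \sqrt{d_i}\sqrt{d_j} A_{ij}$ is still not linear in $d$ either. Instead I would substitute $x=D^{1/2}z$, so $\lambda_{\max}(D^{1/2}AD^{1/2}) = \max_{x\neq 0} \frac{x^TAx}{x^TD^{-1}x}$; this is a supremum over $x$ of the maps $d\mapsto x^TAx / \sum_i x_i^2/d_i$, and since $t\mapsto 1/t$ is convex on $\Rpp$ the denominator $\sum_i x_i^2/d_i$ is convex in $d$, so each such quotient (a positive constant over a positive convex function) is \emph{not} automatically convex — so even this needs care. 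The clean resolution, which I would adopt, is the well-known reciprocal trick: $\lambda_{\max}(AD) = 1/\lambda_{\min}\big((AD)^{-1}\big) = 1/\lambda_{\min}(D^{-1}A^{-1})$, and by the same substitution $\lambda_{\min}(D^{-1}A^{-1}) = \min_{x\neq0} \frac{x^TA^{-1}x}{x^TDx}$; here $d\mapsto x^TDx=\sum_i d_i x_i^2$ is linear, so $d\mapsto x^TA^{-1}x/(x^TDx)$ is of the form (positive constant)$/$(positive linear), hence convex on $\Rnpp$, and $\lambda_{\min}(D^{-1}A^{-1})$ is a pointwise infimum of convex functions — still not obviously convex. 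I would therefore instead argue concavity of $d\mapsto\lambda_{\min}(AD)=\min_{x\neq0}\frac{x^TAx}{x^TD^{-1}x}$: each map $d\mapsto x^TAx/\sum_i x_i^2/d_i$ is a positive constant divided by a convex function, which is concave precisely when the denominator is convex \emph{and} we are on a region where the ratio is concave — equivalently $d\mapsto \big(\sum_i x_i^2/d_i\big)^{-1}$ is concave, and this is the standard fact that the harmonic-type function $(\sum a_i/d_i)^{-1}$ is concave in $d>0$ (it is the reciprocal of a convex function that is itself a positive combination of convex decreasing-to-reciprocal terms; concavity follows e.g.\ from the formula $(\sum a_i/d_i)^{-1} = \inf\{\ldots\}$ or from a direct Hessian computation). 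Taking a pointwise infimum over $x$ preserves concavity, giving concavity of $\lambda_{\min}(d)$; then $\lambda_{\max}(d)=\lambda_{\max}(AD)=\min_x (x^TAx)/(x^TD^{-1}x)$ is wrong in sign — rather $\lambda_{\max}(AD)=\max_x (x^TAx)/(x^TD^{-1}x)$, a pointwise supremum of the \emph{same} concave-in-$d$ maps, hence... not convex. So the honest statement of the obstacle is: convexity of $\lambda_{\max}(d)$ does not follow from either composition or Rayleigh quotients directly, and I expect the paper's actual argument to invoke a genuinely different tool — most likely the identity $\lambda_{\max}(AD)=\lambda_{\max}(D^{1/2}AD^{1/2})$ combined with the fact that $\lambda_{\max}$ of a symmetric matrix is convex and the map $d\mapsto D^{1/2}AD^{1/2}$, while not affine, is \emph{operator-convex}-compatible in the relevant sense, or (more plausibly) a reduction via $\lambda_{\max}(D^{1/2}AD^{1/2}) = \lambda_{\max}\big((D^{-1}\bullet (\text{rank-one}))\big)$-type manipulations. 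In the write-up I would state the eigenpair part in full and then, for the convexity/concavity, reduce cleanly to \Cref{lem:lam1nconv} via $\lambda_i(AD)=\lambda_i(D^{1/2}AD^{1/2})$ and the substitution $x=D^{-1/2}u$ turning the Rayleigh quotient into $\max/\min_{u}\ (u^TD^{1/2}AD^{1/2}u)/(u^Tu)$ with the numerator affine in $d$ after the change of variables $D^{1/2}=\cdot$ — acknowledging that the affine-in-$d$ representation is the crux and is exactly what \Cref{eq:DADequivADD} plus the orthonormal-eigenbasis bookkeeping buys us.
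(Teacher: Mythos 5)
Your treatment of the eigenpair correspondence is correct and essentially matches the paper's: conjugating $D^{1/2}AD^{1/2}U = U\Lambda$ by $D^{-1/2}$ gives $ADX = X\Lambda$, and the formula $Y = DX(X^TDX)^{-1}$ follows from $U^TU = X^TDX$. One small caution: the lemma assumes the $u_i$ are \emph{orthogonal}, not orthonormal, so $X^TDX = U^TU$ is a diagonal matrix with entries $\|u_i\|^2$ rather than the identity; your reduction to $Y=DX$ only holds after normalization, but since you present the general formula as the stated one this is cosmetic.

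The convexity/concavity part has a genuine gap, which you yourself flag: you never produce an argument that $\lambda_{\max}(d)$ is convex, and your candidate routes (composition through the non-affine map $d \mapsto D^{1/2}AD^{1/2}$, the Rayleigh quotient in the form $\max_x x^TAx/(x^TD^{-1}x)$, the reciprocal trick) all fail for the reasons you give. The missing idea is to symmetrize with $A^{1/2}$ rather than with $D^{1/2}$: since $A \succ 0$ has a positive definite square root, $\lambda_i(A\Diag(d)) = \lambda_i(A^{1/2}\Diag(d)A^{1/2})$ for all $i$, and the matrix $A^{1/2}\Diag(d)A^{1/2}$ is \emph{linear} in $d$. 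Hence
\[
\lambda_{\max}(\cDo(d)) = \max_{\|x\|=1} x^T A^{1/2}\Diag(d)A^{1/2}x
\]
is a pointwise maximum of linear functions of $d$ and therefore convex, exactly as in \Cref{lem:lam1nconv}; concavity of $\lambda_{\min}$ follows from the corresponding minimum. (Your harmonic-mean argument does give concavity of $\lambda_{\min}$ as an infimum of concave functions, but it cannot be mirrored for $\lambda_{\max}$, which is why the $A^{1/2}$-symmetrization is the right tool.) With that one substitution your proof closes.
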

\begin{proof}
Let $U:=[u_1 \, \ldots \, u_n]$ be the corresponding matrix with
\emph{orthogonal}, not necessarily orthonormal,
eigenvectors. We have $X=D^{-1/2}U$ and thus $U^TU = X^TDX$. This brings us to
\[
U^{-T} = U(X^TDX)^{-1}.
\]
\label{page:prev165}Notice that $X^TDX$ is a diagonal matrix with its diagonal entries given by $\|u_i\|^2$, which are strictly positive as eigenvectors are nonzero. Thus the inverse is well-defined.
Now, observe that
\[
\begin{array}{rcl}
	Y &=& X^{-T} = D^{1/2}U^{-T}\cr
	&=& D^{1/2}U(X^TDX)^{-1}\cr
	&=& DX (X^TDX)^{-1}
\end{array}
\]
as desired. Now,
\[
\begin{array}{rcl}
D^{1/2} M D^{1/2} U = U\Lambda 
&\implies&
  M D D^{-1/2} U = D^{-1/2} U \Lambda  
\\&\implies&
MDX = X \Lambda
\\&\implies&
X^TDM = \Lambda X^T
\\&\implies&
DMX^{-T} = X^{-T}\Lambda
\\&\implies&
DMY =  Y\Lambda.
\end{array}
\]
The second and last equation show that $X$ and $Y$ are right, and left, eigenvectors of $MD$, respectively.
Moreover, we note that $M\succ 0$ has a positive definite square root
and  $\lambda_i(M\Diag(d)) = \lambda_i(M^{1/2}\Diag(d)M^{1/2})$. Therefore,
\[
\max \lambda_i( \cDo(d)) = 
  \max \lambda_i(M^{1/2}\Diag(d)M^{1/2}) = 
  \max_{\|x\|=1} x^TM^{1/2}\Diag(d)M^{1/2}x.
\]
As in the proof of~\Cref{lem:lam1nconv},
the latter is a maximum of linear functions in $d$ and therefore is
convex. The concavity result follows similarly.
\end{proof}

\begin{cor} Let $M\in \Snpp, d\in  \Rnpp$. Then
\[
\textdef{$\lambda_{\max}(d) = \lambda_{\max}(\cD(d)) =
\lambda_{\max}(\cDo(d))$},\quad
\textdef{$\lambda_{\min}(d) = \lambda_{\min}(\cD(d)) = \lambda_{\min}(\cDo(d))$};
\]
and
\[
\textdef{$\kappa(d) =
\frac{\lambda_{\max}(\cD(d))}{\lambda_{\min}(\cD(d))}=\frac{\lambda_{\max}(\cDo(d))}{\lambda_{\min}(\cDo(d))}$},
\]
where recall $\cDo(d)$ is as in \Cref{linear transform}.
\end{cor}
\begin{proof}
The proof follows immediately from \Cref{lem:eigsADconvconc}.
\end{proof}
This allows us to consider the equivalent simplified view
of finding a $\kappa$-optimal diagonal
preconditioner, i.e.,~we need to solve the fractional pseudoconvex\footnote{A function
$f:X\subseteq \mathbb R^{n}\rightarrow \mathbb R$ is said to be pseudoconvex 
if for 
all $x,y\in X$,
$\nabla f(x)\cdot(y-x)\geq 0 \implies f(y)\geq f(x)$.
A key property of pseudoconvex functions
is that every stationary point is a global minimizer.
}
minimization problem
\begin{equation}
\label{eq:probdDoforkappa}
\bar d \in \argmin \left\{\kappa(d)= \frac {\lambda_{\max}(\cDo(d))}
{\lambda_{\min}(\cDo(d))} :
        d\in \Rnpp \right\}.
\end{equation}

The following \Cref{lem:KposdefInertia} shows that for $M\succ 0$, 
the matrix $MD$ having all positive eigenvalues
is equivalent to the positive definiteness of
$D$. Combining
this observation
with  
\Cref{lem:eigsADconvconc}, we obtain 
that 
$\kappa(d)$ is the ratio of a convex function and a positive concave function on its domain, and is therefore pseudoconvex.

\begin{lemma}
\label{lem:KposdefInertia}
Let $M\in \Snpp,  d\in \Rn,  D=\Diag(d)$. Then 
\[
\lambda_i(MD)>0, \forall i\,\,  \iff \,\, d\in \Rnpp.
\]
\end{lemma}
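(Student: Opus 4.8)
The plan is to reduce this statement about the \emph{nonsymmetric} matrix $AD$ to a statement about a symmetric matrix via a similarity transformation, and then read off the equivalence from Sylvester's law of inertia.

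First I would use that $A\succ 0$ has an invertible positive definite square root $A^{1/2}$, so that
\[
AD \;=\; A^{1/2}\bigl(A^{1/2}DA^{1/2}\bigr)A^{-1/2},
\]
i.e., $AD$ is similar to the \emph{symmetric} matrix $M := A^{1/2}DA^{1/2}$ (this is the same observation already used in the proof of \Cref{lem:eigsADconvconc}). Consequently the eigenvalues of $AD$ are real and coincide with those of $M$, so the hypothesis ``$\lambda_i(AD)>0$ for all $i$'' is exactly the assertion that $M\succ 0$.

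Next I would note that $M = (A^{1/2})^{T} D\, A^{1/2}$ is a congruence of $D$ by the invertible matrix $A^{1/2}$. By Sylvester's law of inertia, $M$ and $D$ then have the same inertia; in particular $M\succ 0$ if and only if $D\succ 0$. Since $D=\Diag(d)$, positive definiteness of $D$ is precisely $d\in\Rnpp$. Chaining the equivalences gives $\lambda_i(AD)>0\ \forall i \iff M\succ 0 \iff D\succ 0 \iff d\in\Rnpp$, which is the claim.

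There is no real obstacle here beyond one point that must not be glossed over: $AD$ is generally not symmetric, so positivity of its spectrum is \emph{not} the same as positive definiteness of $AD$ itself; it is the passage to the symmetric \emph{similar} matrix $M$, together with the fact that $M$ is \emph{congruent} (not merely similar) to $D$, that makes ``inertia'' available and delivers the equivalence. For the ``only if'' direction it is worth keeping in mind that when some $d_i\le 0$ the matrix $D$ has either a zero or a negative eigenvalue, and the inertia identity transfers this defect to $AD$; so the stated condition genuinely fails in that case, confirming that the equivalence is two-sided.
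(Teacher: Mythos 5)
Your proposal is correct and follows essentially the same route as the paper: both pass to the symmetric matrix $A^{1/2}DA^{1/2}$, which is similar to $AD$ (so shares its spectrum) and congruent to $D$ (so shares its inertia by Sylvester's law), yielding the equivalence. Your write-up is a bit more explicit about distinguishing the similarity step from the congruence step, but the underlying argument is identical.
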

\begin{proof}
We note that the eigenvalues of $MD$ are the same as the
eigenvalues of $M^{1/2}DM^{1/2}$. Sylvester's Lemma of inertia 
implies that the number of negative eigenvalues of $M^{1/2}DM^{1/2}$ is
the same as that of $D$ and so it is the same as the number of negative elements in $d$.
\end{proof}

\begin{remark}
\label{Remark Homogeneity}
Note that the two equivalent problems~\Cref{eq:optkappaprob} 
and~\Cref{eq:probdDoforkappa} are both essentially \underline{unconstrained}, 
and the optima are attained and characterized as
stationary points in $\Rnpp$. This is not obvious but follows since we have
the ratios of convex and concave functions using $\lambda_{\max},\lambda_{\min}$
and this is over the open cone constraint $d\in\Rnpp$. If we add the
constraints $d^Te_n=n, d>0$, or equivalently that $d = e_n+Vv > 0$, with $V\in\R^{n\times(n-1)}$ a matrix whose columns contain a basis for $e_n^\perp$ as done above, 
 then we have a bounded problem in $v\in\R^{n-1}$ and $\lambda_{\max}$ is
 bounded above. Bounded below away from $0$ follows from applying the
 greedy solution to the knapsack problem with
 constraints $\sum_i d_i = n, d\geq 0$. We get
 \[
 \begin{array}{rcl}
 \lambda_{\max}(d) 
 &\geq & 
 \trace(MD)/n =\sum_i M_{ii}d_i/n \geq 
 \min_i M_{ii}>0.
 \end{array}
 \]
Therefore,
with the denominator going to $0$, we have $\kappa$ going to $\infty$
as $d = e_n+Vv$ approaches the boundary of the simplex.
That is, minimizing $\kappa$ provides a \emph{self-barrier function}  for this
boundary.

Moreover, $\alpha > 0 \implies \kappa(d)=\kappa(\alpha d)$, i.e.,~we
have positive homogeneity of degree zero. Therefore, the opimal $d$ is \emph{not}
unique. By pseudoconvexity, the optimal set is a convex set. 
This set can be large, see~\Cref{prop:nonuniqd}.
\end{remark}

\Cref{lem:eigderiv,lem:optkappacond} provide the derivative information 
needed for the optimality conditions.
\begin{lemma}[{\cite[(3.1)]{MR2349925}}]
\label{lem:eigderiv}
Let $B:\R\to \Mn$ be differentiable with derivative $\dot B=\dot B(t)$,
and, at $t=\bar t$, let $B(\bar t)$  be 
diagonalizable with real eigenpairs (ignoring the argument $\bar t$)
\[
BX=X\Lambda, \, B^TY=Y\Lambda, \quad \Lambda = \Diag(\lambda).
\]
And make the choice $Y = X^{-T}$.
Let $1\leq k\leq n$ and $\lambda_k$ be a singleton eigenvalue with
right and left eigenvector $x_k,y_k$, respectively, taken from the
corresponding columns of $X,Y$, respectively. Then the derivative of eigenvalues is given by 
\begin{equation}
\label{eq:derivlamk}
\dot \lambda_k(\bar t) = y_k^T(\bar t) \dot B(\bar t) x_k(\bar t) =
\trace \dot B x_ky_k^T.\footnote{If $Y$ is an invertible matrix of
right eigenvectors not chosen using $X^{-T}$, then the normalization
scaling needs to be added explicitly as 
$\dot \lambda_k(\bar t) = y_k^T(\bar t) \dot B(\bar t) x_k(\bar
t)/(y_k(\bar t)^Tx(\bar t))$.
}
\end{equation}
\end{lemma}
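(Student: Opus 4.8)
The plan is to use the classical first-order perturbation theory for a \emph{simple} eigenvalue. Since $\lambda_k$ is a singleton eigenvalue of $B(\bar t)$ and $B$ is differentiable at $\bar t$, standard results on the perturbation of simple eigenvalues (e.g., Kato, or applying the implicit function theorem to the eigen-system augmented by a normalization equation) guarantee that there exist differentiable maps $t\mapsto \lambda_k(t)$ and $t\mapsto x_k(t)$ on a neighborhood of $\bar t$ with $B(t)x_k(t)=\lambda_k(t)x_k(t)$ and $x_k(\bar t)=x_k$. The left eigenvector $y_k$ is handled the same way, or, more simply, taken as the $k$-th column of $X(t)^{-T}$, which is differentiable in $t$ because $X(t)$ is and $X(\bar t)$ is invertible (differentiability of matrix inversion).

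Next I would differentiate the identity $B(t)x_k(t)=\lambda_k(t)x_k(t)$ at $t=\bar t$, obtaining, with all quantities evaluated at $\bar t$,
\[
\dot B\, x_k + B\,\dot x_k = \dot\lambda_k\, x_k + \lambda_k\,\dot x_k .
\]
Left-multiplying by $y_k^T$ and using that $y_k$ is a left eigenvector, $y_k^T B = \lambda_k y_k^T$, the terms $y_k^T B\,\dot x_k$ and $\lambda_k y_k^T\dot x_k$ cancel, leaving
\[
y_k^T \dot B\, x_k = \dot\lambda_k\,(y_k^T x_k).
\]
The choice $Y=X^{-T}$ gives $Y^T X = I$, hence $y_k^T x_k = 1$, so $\dot\lambda_k(\bar t) = y_k^T(\bar t)\,\dot B(\bar t)\,x_k(\bar t)$. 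The trace form then follows immediately from cyclic invariance of the trace applied to the scalar $y_k^T\dot B\,x_k$, namely $y_k^T\dot B\,x_k = \trace\!\big(y_k^T\dot B\,x_k\big) = \trace\!\big(\dot B\,x_k y_k^T\big)$. For a general invertible matrix of right eigenvectors not normalized via $X^{-T}$, one has $y_k^T x_k\neq 1$ in general, and dividing by $y_k^T x_k$ (which is nonzero precisely because $\lambda_k$ is simple) recovers the formula in the footnote.

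I expect the only genuine obstacle to be the justification of differentiability of the eigenpair branch at a simple eigenvalue of a possibly nonsymmetric but diagonalizable matrix; once that is granted, the remainder is the two-line differentiation above. Since the statement is quoted verbatim from \cite[(3.1)]{MR2349925}, it is also legitimate to simply invoke that reference; the sketch above merely records the short self-contained argument.
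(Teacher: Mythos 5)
Your argument is correct and is the standard first-order perturbation derivation for a simple eigenvalue: differentiate $B(t)x_k(t)=\lambda_k(t)x_k(t)$, left-multiply by the left eigenvector to cancel the $\dot x_k$ terms, and use the normalization $y_k^Tx_k=1$ forced by $Y=X^{-T}$; the trace form and the footnote's rescaling by $y_k^Tx_k$ (nonzero for a simple eigenvalue) follow as you say. The paper itself offers no argument here — its ``proof'' is just the citation to \cite[Pg.~303]{MR2349925} — so your write-up supplies exactly the self-contained version of that reference's proof, with the only point requiring care being the one you flag: the differentiability of the eigenvalue/eigenvector branch at a simple eigenvalue, which is guaranteed by the implicit function theorem applied to the eigen-system plus a normalization constraint.
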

\begin{proof}
The proof is in~\cite[Pg 303]{MR2349925}.\footnote{The derivative of
eigenvectors is included in the reference along with a useful
normalization that allows for stability of the evaluations of the
eigenvectors.}
\end{proof}

\begin{lemma}
\label{lem:optkappacond}
The Fr\'echet derivative of the linear transformation $\cDo$ at $d$
acting on $\Delta d$ is (simply)
\[
\textdef{$\cDo^\prime(d)(\Delta d)$} = M\Diag(\Delta d).
\]
Now, let $\lambda$ be a singleton eigenvalue of
$\cDo(d)$ with a right eigenvector $x$.
Then the gradient of the composite function at $d$ is
\[
\nabla \lambda(  \cDo(d)) = \frac{\lambda}{x^TDx}x\bullet x.
\]
\end{lemma}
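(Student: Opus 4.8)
The plan is to establish the two claims of \Cref{lem:optkappacond} in sequence, the first being essentially immediate and the second following by combining \Cref{lem:eigderiv} with the chain rule.

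First I would verify the Fr\'echet derivative formula for $\cDo$. Since $\cDo(d) = A\Diag(d)$ is \emph{linear} in $d$, its Fr\'echet derivative at any point $d$ acting on a direction $\Delta d$ is just the map itself applied to $\Delta d$, namely $\cDo^\prime(d)(\Delta d) = A\Diag(\Delta d)$. This requires no real work beyond noting linearity.

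Next I would compute the gradient of the composite $\lambda \circ \cDo$. The idea is to apply \Cref{lem:eigderiv} with $B(t) = \cDo(d + t\,\Delta d) = A\Diag(d) + t\,A\Diag(\Delta d)$, so $\dot B = A\Diag(\Delta d)$. At $t=0$, the matrix $B(0) = AD$ is diagonalizable with the right/left eigenvector data supplied by \Cref{lem:eigsADconvconc}: right eigenvectors $X$, left eigenvectors $Y = X^{-T}$, with $Y = DX(X^TDX)^{-1}$. For the singleton eigenvalue $\lambda$ with right eigenvector $x$ (a column of $X$), the matching column of $Y$ is $y = Dx/(x^TDx)$, since $X^TDX$ is the diagonal matrix with entries $x_i^TDx_i$ as shown in the proof of \Cref{lem:eigsADconvconc}. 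Then \Cref{eq:derivlamk} gives the directional derivative
\[
\left.\frac{d}{dt}\right|_{t=0}\lambda(\cDo(d+t\,\Delta d)) = y^T\bigl(A\Diag(\Delta d)\bigr)x = \frac{1}{x^TDx}\,(Dx)^TA\Diag(\Delta d)\,x.
\]
Since $x$ is a right eigenvector of $AD$ with eigenvalue $\lambda$, and using symmetry of $A$, we have $(Dx)^TA = x^TDA = (ADx)^T{}$... more directly, $x^TDA\Diag(\Delta d)x$; but observe $ADx = \lambda D^{-1} \cdot {}$ — actually the cleanest route is: $y^TA\Diag(\Delta d)x$ where $A\Diag(\Delta d)x = A(x\bullet \Delta d)$, and $y^TA = (A y)^T = ({}$ using $A^TY$-relation, $Ay$ is not directly an eigenvector, so instead use $y^TA\Diag(\Delta d)x = \frac{1}{x^TDx}x^TDA\Diag(\Delta d)x = \frac{\lambda}{x^TDx}x^T\Diag(\Delta d)x$, where the last step uses $x^TDA = \lambda x^T$ from the third displayed implication in the proof of \Cref{lem:eigsADconvconc}). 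Finally $x^T\Diag(\Delta d)x = \sum_i x_i^2 (\Delta d)_i = (x\bullet x)^T\Delta d$, so the directional derivative equals $\frac{\lambda}{x^TDx}(x\bullet x)^T\Delta d$, which identifies the gradient as $\nabla\lambda(\cDo(d)) = \frac{\lambda}{x^TDx}\, x\bullet x$.

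The main obstacle, such as it is, is bookkeeping: making sure the left-eigenvector normalization from \Cref{lem:eigsADconvconc} is used consistently so that \Cref{eq:derivlamk} applies without the extra scaling factor flagged in its footnote, and correctly invoking the identity $x^TDA = \lambda x^T$ to collapse $x^TDA\Diag(\Delta d)x$ into $\lambda x^T\Diag(\Delta d)x$. One should also note that \Cref{lem:eigsADconvconc} is stated assuming $D\succ 0$, i.e., $d\in\Rnpp$, which is exactly the domain on which $\cDo$ and the composite $\lambda$ are considered here, and that $\lambda$ being a \emph{singleton} (simple) eigenvalue is precisely the hypothesis needed for differentiability of the eigenvalue and for \Cref{lem:eigderiv} to apply.
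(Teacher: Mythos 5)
Your proposal is correct and follows essentially the same route as the paper: linearity gives the Fr\'echet derivative for free, and the eigenvalue gradient is obtained by combining the eigenvalue-derivative formula of \Cref{lem:eigderiv} with the left eigenvector $y = Dx/(x^TDx)$ from \Cref{lem:eigsADconvconc} and the identity $x^TDA = \lambda x^T$ to collapse $y^TA\Diag(\Delta d)x$ into $\frac{\lambda}{x^TDx}\langle x\bullet x,\Delta d\rangle$. The only cosmetic remark is that the mid-paragraph false starts (the aborted $(Dx)^TA = (ADx)^T$ detour) should be cleaned up in a final write-up, but the argument you settle on is exactly the paper's.
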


\begin{proof}
The derivative of the linear transformation is clear.

Suppose as above that we have a linear transformation $\cDo(d)$ with
singleton eigenvalue $\lambda$. Let $y = Dx/x^TDx$ be the corresponding
left eigenvector given in \Cref{lem:eigsADconvconc}. Then the derivative of the composite
function at $d$ acting on $\Delta d$ is
\[
\begin{array}{rcl}
\langle \nabla(\lambda\circ\cDo)(d), \Delta d\rangle
&=&
\lambda^\prime \left( \cDo^\prime(d)(\Delta d)\right)= y^T\left(\cDo'(d)(\Delta d)\right)x
\\ &=&
 \frac{1}{x^TDx}x^T\left(DM\Diag(\Delta d)\right)x
\\ &=& \frac{\lambda}{x^TDx}x^T\Diag(\Delta d)x
=\frac{\lambda}{x^TDx}\langle x\bullet x,\Delta d\rangle.
\end{array}
\]
\end{proof}

\Cref{thm:optcondkappa} shows that
at optimal preconditioning, the extreme eigenvectors must ``spread mass
equally'' across coordinates. This balance condition is what
characterizes optimal diagonal preconditioners.
\begin{theorem}[Characterization of $\kappa$-optimal diagonal
preconditioning]
\label{thm:optcondkappa}
Let $M\in\Snpp, d\in \Rnpp$ be given; let $D=\Diag(d)$. 
Then the gradient of the composite function $\kappa(d)$ is
\[
\nabla \kappa(d) = \kappa (d)\left(
\frac 1{x_1^TDx_1} (x_1\bullet x_1)-\frac 1{x_n^TDx_n}(x_n\bullet x_n)
\right)
\]
where recall that $\kappa(d) := \frac{\lambda_{\max}(\cDo(d))}{\lambda_{\min}(\cDo(d))}$.
Hence, $M$ is $\kappa$-optimally diagonally preconditioned
if, and only if, the orthonormal eigenvector pair satisfies
\begin{equation}
\label{eq:optkappascaled}
x_1\bullet x_1 = x_n\bullet x_n.
\end{equation}
Equivalently, after a permutation of the elements to account for the
sign,
\begin{equation}
\label{eq:optkappascaledsigns}
x_1 = \begin{pmatrix} u \cr v \end{pmatrix},\,
x_n = \begin{pmatrix} u \cr -v \end{pmatrix},\,
\|u\| = \|v\|.
\end{equation}
In the nonsmooth case, we can choose the normalized
$x_1$, respectively, $x_n$, in the eigenspace of 
$\lambda_{\max}(\cDo (d))$, respectively $\lambda_{\min}( \cDo (d))$.

\end{theorem}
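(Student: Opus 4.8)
The plan is to combine the gradient formula from \Cref{lem:optkappacond} with the first-order optimality characterization for the pseudoconvex fractional problem \Cref{eq:probdDoforkappa}. First I would write $\kappa(d) = \lambda_{\max}(d)/\lambda_{\min}(d)$ and differentiate via the quotient rule: using \Cref{lem:optkappacond} applied separately to the simple eigenvalues $\lambda_1$ (with right eigenvector $x_1$) and $\lambda_n$ (with right eigenvector $x_n$), I get $\nabla\lambda_{\max}(d) = \frac{\lambda_1}{x_1^TDx_1}(x_1\bullet x_1)$ and $\nabla\lambda_{\min}(d) = \frac{\lambda_n}{x_n^TDx_n}(x_n\bullet x_n)$. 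Then
\[
\nabla\kappa(d) = \frac{\lambda_n\nabla\lambda_{\max}(d) - \lambda_1\nabla\lambda_{\min}(d)}{\lambda_n^2}
= \frac{\lambda_1}{\lambda_n}\left(\frac{1}{x_1^TDx_1}(x_1\bullet x_1) - \frac{1}{x_n^TDx_n}(x_n\bullet x_n)\right).
\]
Here I should be careful about the normalization: if $(u_i,\lambda_i)$ are \emph{orthonormal} eigenpairs of $\cD(d)$ and $x_i = D^{-1/2}u_i$ as in \Cref{lem:eigsADconvconc}, then $x_i^TDx_i = u_i^Tu_i = 1$, which is exactly why the displayed gradient in the theorem has the factor $x_n^TDx_n$ appearing in both terms simplify — I would flag that the stated formula uses the convention $x_1^TDx_1 = x_n^TDx_n = 1$ (so both denominators equal $x_n^TDx_n$), and reconcile the bookkeeping accordingly.

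Next I would invoke pseudoconvexity: by \Cref{Remark Homogeneity} the problem is essentially unconstrained over the open cone $\Rnpp$, $\kappa$ is pseudoconvex there, and $\kappa\to\infty$ at the boundary, so $d$ is optimal if and only if $\nabla\kappa(d) = 0$ (in the smooth case). Setting the bracketed expression to zero and using $x_1^TDx_1 = x_n^TDx_n = 1$ gives precisely $x_1\bullet x_1 = x_n\bullet x_n$, which is \Cref{eq:optkappascaled}. The equivalent signed form \Cref{eq:optkappascaledsigns} then follows by a coordinatewise argument: $x_1\bullet x_1 = x_n\bullet x_n$ means $|(x_1)_i| = |(x_n)_i|$ for every $i$, so after permuting coordinates we can split into the block where the signs agree (call the common subvector $u$) and the block where they disagree (subvector $v$ in $x_1$, $-v$ in $x_n$); the norm condition $\|u\| = \|v\|$ comes from $1 = \|x_1\|_?^2$ — more precisely from $x_1^TDx_1 = x_n^TDx_n$ combined with... actually I should note $\|u\|=\|v\|$ follows most cleanly by observing $x_1\perp x_n$ in the appropriate inner product is \emph{not} what we want; instead, under the normalization and the structure, $\|x_1\|^2 = \|u\|^2+\|v\|^2$ while the relation forcing $\|u\|=\|v\|$ needs one more input — the orthogonality $u_1^Tu_n = 0$, i.e.\ $x_1^TDx_n = 0$, which in the split form reads $u^TD_1u - v^TD_2v = 0$; combined with the eigenvalue equations this yields the stated balance. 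I would present this last step carefully rather than hand-wave it.

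Finally, for the nonsmooth case (when $\lambda_{\max}$ or $\lambda_{\min}$ is a repeated eigenvalue), I would replace gradients by Clarke subdifferentials: $\partial_C\lambda_{\max}(d)$ is the convex hull of the vectors $v\bullet v$ over unit eigenvectors $v$ of $\lambda_{\max}(\cDo(d))$ (suitably normalized), and similarly for $\lambda_{\min}$, using \Cref{lem:lam1nconv} and standard subdifferential calculus for $\lambda_{\max}$ of a symmetric matrix together with the similarity $\cDo(d)\sim A^{1/2}\Diag(d)A^{1/2}$ from \Cref{lem:eigsADconvconc}. Stationarity $0\in\partial_C\kappa(d)$ then becomes the statement that there exist normalized eigenvectors $x_1$ of $\lambda_{\max}$ and $x_n$ of $\lambda_{\min}$ with $x_1\bullet x_1 = x_n\bullet x_n$. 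The main obstacle I anticipate is the precise normalization bookkeeping between the $u_i$ (orthonormal, used for $\cD$) and the $x_i = D^{-1/2}u_i$ (used for $\cDo$, not orthonormal), and the clean derivation of $\|u\|=\|v\|$ in \Cref{eq:optkappascaledsigns}; everything else is a direct assembly of \Cref{lem:optkappacond}, \Cref{lem:eigsADconvconc}, and the pseudoconvexity noted in \Cref{Remark Homogeneity}.
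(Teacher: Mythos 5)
Your proof follows essentially the same route as the paper's: the quotient rule applied to $\lambda_{\max}/\lambda_{\min}$ together with the eigenvalue-gradient formula of \Cref{lem:optkappacond}, then setting the gradient to zero over the open cone where the problem is essentially unconstrained; you are also right to flag that the first denominator in the displayed gradient should be $x_1^TDx_1$ (the paper's own computation has it that way) and that with orthonormal $u_i$ both normalizing factors equal $1$. The one step you hesitated on, $\|u\|=\|v\|$, is immediate rather than delicate: optimality of $A$ itself means the stationary point is $d=e$, so $D=I$, the $x_i$ are the ordinary orthonormal eigenvectors of $A$, and $0=x_1^Tx_n=\|u\|^2-\|v\|^2$ gives the claim directly.
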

\begin{proof}
From \Cref{lem:optkappacond}, we can find the gradient as follows:
\[
\begin{array}{rcl}
\nabla \kappa(d) 
&=& 
\frac 1{\lambda_n(d)^2} \left(
\lambda_n(d) \dot\lambda_1(d) -  
            \lambda_1 (d)\dot\lambda_n(d)\right)=\frac 1{\lambda_n^2} \left(\frac{\lambda_n\lambda_1}{x_1^TDx_1}x_1\bullet x_1 - \frac{\lambda_1\lambda_n}{x_n^TDx_n}x_n\bullet x_n\right)
\\&=&
\kappa(d) \left(
(x_1\bullet x_1)/(x_1^TDx_1) -  (x_n\bullet x_n)/(x_n^TDx_n)\right).
\end{array}
\]
The characterization for $\kappa$-optimally diagonally preconditioned matrix $A$ follows from solving $\nabla \kappa(e_n)=0$.
\end{proof}

From \cref{eq:optkappascaled} we see that
at optimality, the squared magnitudes of the top and bottom eigenvectors
coincide (up to permutation). This shows immediately that optimal
solutions may be non-unique. This is specified using
\Cref{thm:optcondkappa} to illustrate that the optimal set
can be a large set. 
\Cref{prop:nonuniqd} shows that nonuniqueness is not pathological but
structural: whenever the extreme eigenspaces do not fully determine the
orthogonal complement, entire continua of optimal scalings exist. This
explains why normalization is essential for numerical stability and
motivates \Cref{Avoiding Pos Homogeneity} below.

\begin{prop}[Nonuniqueness of $\kappa$-optimal diagonal preconditioning]
\label{prop:nonuniqd}
Let $M\succ 0$ be a $\kappa$-optimal 
diagonal preconditioned matrix as given in~\Cref{thm:optcondkappa} with
$x_i,\lambda_i, i=1,n$, being two, \underline{singleton}, 
eigenpairs that satisfy the 
optimality conditions in \Cref{thm:optcondkappa}. 
Thus we have 
\[
\lambda_1 > \lambda_2\geq \ldots\geq \lambda_{n-1} > \lambda_n>0,
\quad \lambda = (\lambda_i)\in \Rnpp,\, \Lambda = \Diag(\lambda),
\]
and we let $Q = \begin{bmatrix} x_1 & \bar Q & x_n\end{bmatrix}$ be an
orthogonal matrix and $M=Q\Lambda Q^T$ from the spectral theorem. Let $V$ be the basis for $e_n^\perp$ defined in~\Cref{eq:defV}.
Suppose in addition that
the \emph{lack} of \textdef{strict complementarity condition} holds,
i.e.,~there exists $v$ such that
\begin{equation}
\label{eq:nonuniqcond}
0\neq v\in \{u\in \Rnm : 0 = Vu\bullet x_1 = Vu\bullet x_n\}.
\end{equation}
Then with $\Delta D = \Diag(Vv)$, there exists $\epsilon >0$ such that
\[
\kappa(M) = \kappa\left((I+t\Delta D)M(I+t \Delta D)\right), \,
\forall |t|\leq \epsilon.
\]

\end{prop}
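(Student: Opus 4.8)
The plan is to exhibit the entire one-parameter family $M(t):=(I+t\,\Delta D)\,A\,(I+t\,\Delta D)$, with $\Delta D=\Diag(Vv)$, and show that its largest and smallest eigenvalues never move, so that $\kappa(M(t))$ is constant. First I would unpack the hypothesis~\Cref{eq:nonuniqcond}: the identities $Vv\bullet x_1=0$ and $Vv\bullet x_n=0$ say coordinatewise that $\Delta D\,x_1=Vv\bullet x_1=0$ and $\Delta D\,x_n=0$, i.e.\ the extreme eigenvectors lie in $\ker\Delta D$. (Since $A$ is $\kappa$-optimally preconditioned, \Cref{thm:optcondkappa} gives $x_1\bullet x_1=x_n\bullet x_n$, so $x_1$ and $x_n$ share the same support and one of the two equations is redundant; I will not need this. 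Also $V$ is injective, so $v\neq 0$ forces $\Delta D\neq 0$.) A one-line computation using $Ax_1=\lambda_1x_1$ then gives $(I+t\,\Delta D)x_1=x_1$ and hence $M(t)x_1=(I+t\,\Delta D)Ax_1=\lambda_1x_1$ for every $t$, and likewise $M(t)x_n=\lambda_nx_n$. Thus $\lambda_1$ and $\lambda_n$ are eigenvalues of $M(t)$ for all $t$.

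Next I would fix $\epsilon_0>0$ small enough that $I+t\,\Delta D=\Diag(e+tVv)\succ 0$ for $|t|\le\epsilon_0$; then $M(t)$ is symmetric and congruent to $A\succ 0$, hence positive definite with positive real eigenvalues, so $\kappa(M(t))$ is well defined. Write $\mu_1(t)\ge\cdots\ge\mu_n(t)$ for its sorted eigenvalues, so $\mu_j(0)=\lambda_j$. Because $\lambda_1=\lambda_{\max}(A)$ and $\lambda_n=\lambda_{\min}(A)$ are \emph{singleton} eigenvalues there are strictly positive gaps $\lambda_1-\lambda_2$ and $\lambda_{n-1}-\lambda_n$; by continuity of $t\mapsto M(t)$ together with Weyl's perturbation inequality I can shrink to some $\epsilon\in(0,\epsilon_0]$ so that $|\mu_j(t)-\lambda_j|<\tfrac12\min\{\lambda_1-\lambda_2,\ \lambda_{n-1}-\lambda_n\}$ for all $j$ whenever $|t|\le\epsilon$. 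Then $\mu_j(t)<\lambda_1$ for every $j\ge 2$ and $\mu_j(t)>\lambda_n$ for every $j\le n-1$; since $\lambda_1$ is an eigenvalue of $M(t)$ it can only be $\mu_1(t)$, i.e.\ $\lambda_{\max}(M(t))=\lambda_1$, and similarly $\lambda_{\min}(M(t))=\lambda_n$. Hence $\kappa\bigl((I+t\,\Delta D)A(I+t\,\Delta D)\bigr)=\mu_1(t)/\mu_n(t)=\lambda_1/\lambda_n=\kappa(A)$ for all $|t|\le\epsilon$, which is the assertion.

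The only genuinely delicate point is the last step: ruling out that the congruence produces a stray eigenvalue above $\lambda_1$ or below $\lambda_n$. This is handled purely by the spectral gaps of $A$ plus continuity of eigenvalues; no second-order Taylor expansion of $\kappa$ along the path is required, because the extreme eigenvalues are not merely critical but literally constant along it. The singleton hypothesis on $\lambda_1,\lambda_n$ is used exactly here — without a gap, a branch of eigenvalues of $M(t)$ could overtake $\lambda_1$ and break the argument.
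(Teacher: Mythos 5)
Your proof is correct and follows essentially the same route as the paper's: both arguments hinge on the observation that \cref{eq:nonuniqcond} forces $\Delta D\,x_1=\Delta D\,x_n=0$, so the extreme eigenpairs $(\lambda_1,x_1)$ and $(\lambda_n,x_n)$ are preserved exactly along the path, and then use continuity of eigenvalues together with the spectral gaps from the singleton hypothesis to keep $\lambda_1,\lambda_n$ extremal for small $t$. Your Weyl-inequality formulation of the last step is simply a more explicit version of the paper's ``continuity of eigenvalues'' argument via the decomposition $A=A_1+A_2$, and is fine as written.
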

\begin{proof}
The result follows from expanding
\[
(I+\epsilon \Delta D)M(I+\epsilon \Delta D) = M + O(\epsilon)
\]
and using the continuity of eigenvalues and the fact that the optimality
conditions imply
\[
\begin{array}{rcl}
	Vv\bullet x_i = 0, i=1,n &\iff & \Diag(Vv)x_i = 0, i=1,n \\
	&\iff & \Delta Dx_i = 0, i=1,n.
\end{array}
\]
Specifically, let 
\[
M_2=\bar Q\Diag((\lambda_2,\ldots,\lambda_{n-1})^T)\bar Q^T, \quad  M_1=M-M_2.
\]
The above equation then implies $\Delta DM_1 = M_1\Delta D = 0$.
Moreover, let $D=(I+\epsilon\Delta D)$, then
\[
DMD = M_1 + DM_2D = M + \epsilon\Delta DM_2 + \epsilon M_2\Delta D + \epsilon^2\Delta DM_2\Delta D.
\]
Since $\range(\Delta D)\subseteq\nul(M_1) = \range(M_2)$,
the range of the perturbation of $M$ above is restricted to 
the eigenspace of $M_2$, i.e.,~to the $\spanl(\{x_2,x_3,\dots,x_{n-1}\})$.
This means that after the perturbation, 
with $\epsilon>0$ sufficiently small, $\lambda_1$ and $\lambda_n$ remain
 the largest and smallest eigenvalues of $DMD$ respectively.
As stated above, we are using the continuity of eigenvalues
and the orthogonality $M_1M_2=0$ that arises using the spectral theorem
and $M_1,M_2\succeq 0$.
\end{proof}
Note that if condition
\cref{eq:nonuniqcond} holds, then we get a nonsingleton set 
in $\Rnm$ of solutions. Moreover, the structure of $V$ implies that 
\cref{eq:nonuniqcond} restricts the support of the eigenspace
$\spanl(\{x_1,x_n\})$.\footnote{Necessity is still an open problem.}

\subsection{A Projected Subgradient Method for Minimizing $\kappa(d)$}
\label{Asymptotic Section}
In \Cref{thm:optcondkappa}, we derived the gradient and optimality conditions for minimizing the pseudoconvex function $\kappa(d)$ in \Cref{eq:probdDoforkappa}, over the open set $d>0$.
Convergence of subgradient methods for pseudoconvex minimization typically requires optimizing over a closed set. Hence, we consider the following optimization problem:

\begin{equation}
\label{eq:DClosedSet}
\bar d \in \argmin \left\{\kappa(d)= \frac {\lambda_{\max}(\cDo(d))}
{\lambda_{\min}(\cDo(d))} :
        d\in \Omega:=\left\{d: d\geq \delta e_n\right\}\right\},
\end{equation}
where in this subsection $\kappa(d):=\infty$ for $d \notin
\Omega$ and $\delta\in (0,1)$ is small.

We now exploit the pseudoconvex structure that guarantees
all stationary points are global minimizers making subgradient methods natural.
We first treat the unconstrained formulation in \Cref{GD} and
address homogeneity in \Cref{Avoiding Pos Homogeneity}.

\begin{algorithm}[H]
\caption{A Subgradient Method for Minimizing $\kappa(d)$ over $\Omega$}
\label{GD}
\begin{algorithmic}[1]
\REQUIRE symmetric positive definite matrix $M \succ 0$; sequence of
positive stepsizes $\{t_k\} \rightarrow 0$ with $\sum_{k=1}^{\infty} t_k=\infty$, scalar $\delta\in (0,1)$; 
tolerance tol; rule for the stopping criterion, stopcrit.
\STATE set $k \gets 0$;
\STATE set $\textrm{stopcrit} \gets \infty$;
\STATE set $d_1=e_n\in \Rn$;
\WHILE{stopcrit$>$tol} 
\STATE  set $k\leftarrow k+1$;
\STATE compute min eigenpair $(\lambda_n^{k},x_n^{k})$ and max eigenpair $(\lambda_1^{k},x_1^{k})$ of $M\Diag(d_{k})$;
\STATE 
compute direction
\begin{equation}\label{direction}
s_k=\frac{\lambda_1^{k}}{\lambda_n^{k}}\left(\frac{1}{\langle x_1^{k}, d_k\bullet x_1^{k} \rangle}\left(x_1^{k}\bullet x_1^{k}\right)- \frac{1}{\langle x_n^{k}, d_k\bullet x_n^{k} \rangle}\left(x_n^{k}\bullet x_n^{k}\right) \right);
\end{equation}
\STATE perform projected gradient step
\begin{equation}\label{GD update}
d_{k+1}=\max\left\{d_k- t_k\frac{s_k}{\|s_k\|}, \delta e_n\right\};
\end{equation}


\STATE update stopcrit;

\ENDWHILE (main outer loop)

\ENSURE $\hat D:=\Diag(d_{k+1})$.

\end{algorithmic}
\end{algorithm}

\begin{remark}
For \Cref{GD}, we use a
popular choice for stepsize sequence $\left\{t_k\right\}$ in 
subgradient methods, $t_k=1/k$, where $k$ is the iteration index \cite{kiwiel2001convergence}. 
We note that since \Cref{GD} is a subgradient method, in 
general it is not a descent method. 
Finally, the algorithm
is general in that it does not specify the stopping rule stopcrit.
There are several possible rules that the user
can employ in practice for updating stopcrit in step 9. For example, at every iteration the user can
take \textrm{stopcrit} as $\|s_k\|$ or $|\kappa(d_{k+1})-\kappa(d_k)|$.
\end{remark}

We now briefly describe each of the steps of \Cref{GD}. 
First, step~6
computes a maximal and minimal eigenpair of $M\Diag(d_{k})$ to 
construct the search direction $s_{k}$. It should be noted that the user never has to form the 
matrix $M\Diag(d_{k})$ to compute the eigenpairs.
Instead, the user only has to construct subroutines
which compute $M\Diag(d_{k})*x$ and
$(M\Diag(d_{k}))$\textbackslash$x$ efficiently, where $x\in \Rn$.
Second, it will be shown in \Cref{Asymptotic Analysis}
that the direction $s_k$ computed in step~7 lies in the quasisubdifferential 
(also to be defined in \Cref{Asymptotic Analysis}) 
of $\kappa(d_k)$.
Finally, using this direction, step~8 performs 
the projected gradient update $d_{k+1}=\Pi_{\Omega}(d_k-t_k\frac{s_k}{\|s_k\|})$, 
where \textdef{$\Pi_{\Omega}(x):=\argmin_{\Omega}\|x-\cdot\|$} denotes the projection onto $\Omega$. 

\subsubsection{Asymptotic Convergence Analysis}\label{Asymptotic Analysis}
Our convergence analysis of \Cref{GD} relies mostly on
\cite{kiwiel2001convergence} where efficient subgradient methods for
minimizing quasiconvex functions are presented. Under various assumptions in 
addition to quasiconvexity, the author establishes asymptotic 
convergence of subgradient methods with decaying stepsizes. 
Since our 
function $\kappa(d)$ is pseudoconvex, it is quasiconvex. For a more detailed 
discussion related to the assumptions of Kiwiel \cite{kiwiel2001convergence} and how our set-up
satisfies these assumptions, the reader is 
referred to \Cref{Kiweil assumptions}.
In the remaining part
of this subsection, we show that \Cref{GD} asymptotically converges
by showing that it is an instance of the subgradient framework proposed
by Kiwiel. 

First, we need to define a special subdifferential called the quasisubdifferential for a quasiconvex function~$f$.
Define the strict sublevel set or inner slice of $f$ as:
\index{$\Int \bar D$, interior}
\index{interior, $\Int \bar D$}
\begin{equation}\label{Sublevel set}
\textdef{$\bar S(x):=\left\{y \in \Int\bar D: f(y)<f(x) \right\}$},
\end{equation}
where $\Int \bar D$ denotes the interior of the domain of $f$.
The quasisubdifferential of a quasiconvex function $f$ relative to the above sublevel set is defined as
\begin{equation}\label{Quasisubdiff}
\textdef{${\bar{\partial}}^{\circ} f(x):=\left\{g:\langle g, y-x\rangle <0, \quad \forall y \in \bar S(x)\right\}$}.
\end{equation}
To minimize a quasiconvex function $f$ over a closed convex set $X$, Kiwiel proposes in \cite{kiwiel2001convergence} the following
basic subgradient algorithm:
\begin{align*}
x_{k+1}:=\Pi_{X}(x_k-t_k\hat g_k), \quad \hat g_k:=g_k/\|g_k\|, \quad g_k\in {\bar{\partial}}^{\circ} f(x_k), \quad k=1,2,\ldots, \quad x_1\in X,
\end{align*}
where $t_k>0$ are the stepsizes.

Hence, we need to characterize vectors that lie in the quasisubdifferential of $\kappa(d)$ to show that \Cref{GD} is an instance of 
Kiwiel's subgradient framework.  
The result below presents one characterization of vectors that lie in the quasisubdifferential of fractional programs
like the one in our set-up \Cref{eq:DClosedSet}.

\begin{lemma}\label{Refined Quasisubdiff}
Suppose $f(x):=a(x)/b(x)$ for all $x \in X$ and $f(x):=\infty$ for $x \notin X$, where $a(x)$ is a convex function that is 
positive
on $X$, 
$b(x)$ is a concave function that is positive on $X$. Let $\bar D^{b}$ denote the domain
of $b$.
If for $x\in X \cap \bar D^{b}$,
$B:=1/b(x)$	and $A:=a(x)/b^2(x)$, then
\begin{equation*}
	B[\partial a (x)]+A[\partial (-b)(x)]\subseteq {\bar{\partial}}^{\circ} f(x).
\end{equation*}

\end{lemma}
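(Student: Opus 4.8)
The plan is to take an arbitrary element $g = Bg_a + Ag_b$ with $g_a \in \partial a(x)$ and $g_b \in \partial(-b)(x) = -\partial b(x)$, and show directly that $\langle g, y-x\rangle < 0$ for every $y$ in the strict sublevel set $\bar S(x) = \{y \in \Int \bar D : f(y) < f(x)\}$; here the domain of $f$ is $X \cap \bar D^b$ (where $a$ is finite everywhere relevant and $b$ contributes the only domain restriction). Since $B = 1/b(x) > 0$ and $A = a(x)/b^2(x) > 0$ by the positivity hypotheses, and the subdifferential sum is what appears, the whole argument reduces to a one-line manipulation of the fractional inequality $a(y)/b(y) < a(x)/b(x)$ once the subgradient inequalities are invoked.

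First I would fix $y \in \bar S(x)$, so that $a(y)/b(y) < a(x)/b(x)$, and since $b(y) > 0$ on $X$ this rearranges to $a(y)b(x) - a(x)b(y) < 0$, equivalently $a(y)b(x) < a(x)b(y)$. Next, apply the Fenchel subgradient inequalities: $\langle g_a, y-x\rangle \le a(y) - a(x)$ and $\langle g_b, y-x\rangle \le (-b)(y) - (-b)(x) = b(x) - b(y)$. Then I would compute
\[
\langle g, y-x\rangle = B\langle g_a, y-x\rangle + A\langle g_b, y-x\rangle
\le B\bigl(a(y)-a(x)\bigr) + A\bigl(b(x)-b(y)\bigr).
\]
Substituting $B = 1/b(x)$ and $A = a(x)/b^2(x)$, the right-hand side becomes
\[
\frac{a(y)-a(x)}{b(x)} + \frac{a(x)\bigl(b(x)-b(y)\bigr)}{b^2(x)}
= \frac{b(x)\bigl(a(y)-a(x)\bigr) + a(x)\bigl(b(x)-b(y)\bigr)}{b^2(x)}
= \frac{a(y)b(x) - a(x)b(y)}{b^2(x)},
\]
after the $a(x)b(x)$ terms cancel in the numerator. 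Since $b^2(x) > 0$ and the numerator $a(y)b(x) - a(x)b(y)$ is strictly negative by the sublevel-set inequality, we conclude $\langle g, y-x\rangle < 0$, which is exactly membership in ${\bar\partial}^\circ f(x)$.

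The only subtlety — and the one point I would be careful about rather than a genuine obstacle — is that the final inequality must be \emph{strict}, which is why it matters that $y$ lies in the \emph{strict} sublevel set and that $b(y) > 0$: the product $b^2(x)$ stays positive and does not degrade the strict sign. The subgradient inequalities themselves are only non-strict, but they enter multiplied by the strictly positive constants $A, B$ and are then dominated by the strictly negative quantity $a(y)b(x) - a(x)b(y)$, so strictness is preserved. One should also note that the sum on the left, $B[\partial a(x)] + A[\partial(-b)(x)]$, is a Minkowski sum of sets, so the argument must hold for every choice of $g_a, g_b$ simultaneously — but since the computation above used only the defining subgradient inequalities for each, it goes through verbatim for arbitrary representatives, giving the claimed inclusion.
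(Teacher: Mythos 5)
Your proof is correct and follows essentially the same route as the paper's: both reduce to the identity $B\bigl(a(y)-a(x)\bigr)+A\bigl(b(x)-b(y)\bigr)=\bigl(a(y)b(x)-a(x)b(y)\bigr)/b^2(x)$ and conclude strict negativity from $f(y)<f(x)$ and the positivity of $b$. The only cosmetic difference is that the paper first packages the Minkowski sum into $\partial(Ba-Ab)(x)$ via the subdifferential sum rule before applying a single subgradient inequality, whereas you apply the two subgradient inequalities separately and add them --- the computations are identical.
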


\begin{proof}
Let $x \in X \cap \bar D^{b}$
and consider $B$ and $A$ as in the assumptions
of the lemma.
It follows from the fact that $a$ and
$-b$ are convex functions, $B$ and $A$ are positive scalars, and Fenchel subdifferential calculus rules that
\[B[\partial a(x)]+A[\partial (-b)(x)]=
\partial(Ba)(x)+\partial(-Ab)(x)
\subseteq 
\partial(Ba-Ab)(x).
\]
Now, let  $g \in B[\partial a(x)]+A[\partial (-b)(x)]\subseteq \partial(Ba-Ab)(x)$, and suppose $y \in \bar S(x)$, i.e., $y$ is in the interior of the domain of $f$ and $f(y)<f(x)$.
It then follows that
\begin{align*}
\langle g, y-x \rangle &\leq B a(y)-A b(y)-B a(x)+Ab(x)\\
&=\frac{a(y)}{b(x)}-\frac{a(x)b(y)}{b^2(x)}<0,
\end{align*}
where the first inequality follows from the definition of Fenchel subdifferential,
the equality follows from the definitions of $A$ and $B$, and the last inequality 
follows from the fact that $a(y)/b(y)<a(x)/b(x)$ and $b(y)$ and $b(x)$ are positive. It then follows 
from the definition of 
quasisubdifferential in \Cref{Quasisubdiff}
that $g \in {\bar{\partial}}^{\circ} f(x)$. 
\end{proof}

\Cref{Cor:Translation} constructs a vector that lies in the 
quasisubdifferential of $\kappa(d)$.
\begin{corollary} 
\label{Cor:Translation}
Consider $M \succ 0$ as in \Cref{eq:DClosedSet} and $d\in \Omega \cap \Int \bar D^{\lambda_{\min}}$, where $\bar D^{\lambda_{\min}}$ is the domain
of $\lambda_{\min}(\cDo(d))$. Let $(\lambda_1,x_1)$ and $(\lambda_n,x_n)$ be a maximal and minimal eigenpair of 
$\cDo(d)$, respectively. Then, it holds that
\begin{equation*}
	\frac{\lambda_1}{\lambda_n}\left( \frac 1{x_1^T(d\bullet x_1)}(x_1\bullet x_1) 
     - \frac 1{x_n^T(d\bullet x_n)}(x_n\bullet x_n) \right) \in {\bar{\partial}}^{\circ} \left(\kappa(d) \right).
\end{equation*}

\end{corollary}

\begin{proof}
It follows from \Cref{lem:eigsADconvconc} that $\lambda_{\max}(\cDo(d))$ and $\lambda_{\min}(\cDo(d))$ are convex and concave
functions of $d$, respectively. Also, it is easy to see that $\lambda_{\max}(\cDo(d))$ and $\lambda_{\min}(\cDo(d))$ are 
positive on $\Omega$.  
Hence, it follows from \Cref{Refined Quasisubdiff} that
\begin{equation}\label{quasisubdiff inclusion}
\frac{1}{\lambda_{\min}(\cDo(d))}\partial \lambda_{\max}(\cDo(d))+\frac{\lambda_{\max}(\cDo(d))}{\lambda^2_{\min}(\cDo(d))} \partial \left(-\lambda_{\min}(\cDo(d))\right) \subseteq {\bar{\partial}}^{\circ} \left(\kappa(d) \right)
\end{equation}
holds for $d \in \Omega \cap \Int \bar D^{\lambda_{\min}}$.
Moreover, it follows from \Cref{lem:optkappacond}, the definition of 
Clarke subdifferential in \Cref{Clarke subdifferential}, and the fact that the Fenchel
and Clarke subdifferentials coincide for convex functions that the
following inclusions hold.
\begin{equation}\label{quasi inclusion 2}
\frac{\lambda_1}{x_1^{T}\left(d\bullet x_1\right)}\left(x_1\bullet
x_1\right)\in \partial \lambda_{\max}(\cDo(d)), \quad
-\frac{\lambda_n}{x_n^{T}\left(d\bullet x_n\right)}\left(x_n\bullet
x_n\right)\in \partial \left(-\lambda_{\min}(\cDo(d))\right),
\end{equation}
where here $(\lambda_1,x_1)$ and $(\lambda_n,x_n)$ are maximal and minimal eigenpairs of $\cDo(d)$, respectively.
The result then follows from \Cref{quasisubdiff inclusion} and
\Cref{quasi inclusion 2}.

\end{proof}

\begin{remark}\label{Quasi Remark}
	It follows from \Cref{Cor:Translation} that the update rule \Cref{GD update} in step~8
	is of the form
	\[d_{k+1}=\Pi_{\Omega}\left(d_k-t_k\frac{s_k}{\|s_k\|} \right), \text{ where } s_k \in {\bar{\partial}}^{\circ} \left(\kappa(d_k) \right).\]
\end{remark}

We are now ready to present the main theorem
that shows \Cref{GD} converges asymptotically.

\begin{theorem}[Asymptotic convergence]
Let $\{d_k\}$ be generated by \Cref{GD}.
Let 
\[
\kappa_{*}:=\min\left\{\kappa(d): d\in \Omega\right\};\quad 
	\kappa_{*}^{k}=\min \{\kappa(d_j), j = 1\ldots,k\}.
\]
 Then 
\[
\underline{\lim}_{k \rightarrow \infty} \kappa(d_k)=\kappa_{*}; 
 \quad \text{and  } \kappa_{*}^{k}\downarrow \kappa_{*}.
\]
\end{theorem}

\begin{proof}
It follows from the last remark in \Cref{lem:eigsADconvconc} and the definitions of $\kappa(d)$
and $\Omega$ in \Cref{eq:DClosedSet}, that $\lambda_{\max}(\cDo(d))$ (resp.
$\lambda_{\min}(\cDo(d))$) is a convex (resp. concave) function that is positive
on $\Omega$. It then follows from this observation, \Cref{Remark Kiwiel Fractional}, and the definition of
$\kappa(d)$, that
assumptions \textbf{A1-A4} in \Cref{Kiweil assumptions} hold for the minimization problem in 
\Cref{eq:DClosedSet}. Clearly, also assumption \textbf{A5} in \Cref{Kiweil assumptions} also holds since $\Omega$ in \Cref{eq:DClosedSet}
is a closed set and the intersection of $\Omega$ with the interior of the domain of $\kappa(d)$ is clearly nonempty. 
The result of the theorem then immediately follows from this observation, 
\Cref{Quasi Remark}, the facts that $t_k\rightarrow 0$, and $\sum_{k=1}^{\infty} t_k=\infty$,
and Theorem 1 in \cite{kiwiel2001convergence}.
\end{proof}

\subsection{Avoiding Positive Homogeneity to Ensure Well-Posedness in Minimizing $\kappa(d)$}
\label{Avoiding Pos Homogeneity}
As mentioned in \Cref{Remark Homogeneity}, $\kappa(d)$ is a
positively homogenous function (of degree zero). Thus there are multiple optimal
solutions and our problem is \emph{Hadamard ill-posed}.
From a computational
stability perspective, it is more efficient to minimize an equivalent 
smaller dimensional formulation that is not positively homogeneous. 
With this in mind,
we let $M\succ 0$ and $V\in \R^{n \times (n-1)}$
be a matrix whose columns form a basis for $e_n^{\perp}$, where $e_n\in
\Rn$ is the vector of all ones.
We consider the function
\begin{equation}\label{non homogoenous function}
\textdef{$\cVo(v):=M\Diag(e_n+Vv)$},
\end{equation}
where $v\in \R^{n-1}$.
In this subsection, we consider the following formulation
\begin{equation}\label{Not Homogoenous Formulation}
\min \left\{
\textdef{$\kappa(v):=
\frac
{\lambda_{\max}(\cVo(v))}
{\lambda_{\min}(\cVo(v)) }
$} : 
    e_n+Vv\geq \hat \delta e_n,\, v\in \Rnm \right\},
\end{equation}
where $\kappa(v):=\infty$ if $e_n+Vv< \hat \delta e_n$ and $\hat \delta\in (0,1)$ is a scalar.
It is easy to see that with the choice of  $V$ in~\cref{eq:defV},
the above \Cref{Not Homogoenous Formulation} can be rewritten as
\begin{equation}\label{Simplex Formulation}
\min \left\{\kappa(v) : v\in \hat \Omega\right\},
\end{equation}
where
\begin{equation}\label{Simplex Def}
\textdef{$\hat \Omega$}:=\left\{v\in \R^{n-1}: \sum_{i=1}^{n-1}v_i\leq -\sqrt{2}(\hat \delta-1), \quad v_i\geq \sqrt{2}(\hat \delta-1), \quad i=1,\ldots n-1 \right\}.
\end{equation}
Note that the set $\hat \Omega$ is convex and compact.
Projecting onto it is also easy since it is just a simplex.
Also, since $\hat \Omega$ is bounded, we will be able to get
nonasymptotic convergence guarantees for the subgradient method 
that we propose to minimize \Cref{Simplex Formulation}.

The following lemma will be useful for developing our subgradient method.
It gives
useful characterizations of the derivatives of
$\cVo(v)$ and $\kappa(v)$ in the smooth setting when the maximum and minimum eigenvalues
of $\cVo(v)$ have multiplicity one.

\begin{lemma}[Derivatives of $\cVo(v),\kappa(v)$]
\label{cor:optandgradcVo}
Let $M\succ 0, v\in \Rnm$ be given and set 
$w=e_n+Vv\in \Rnpp$ and $D = \Diag(w)$. 
Also, let $(\lambda_1,x_1)$ and $(\lambda_{n},x_n)$ be maximal
and minimal eigenpairs of $\cVo(v)$, respectively,
where $\lambda_1$ and $\lambda_n$ have multiplicity one and
$x_1$ and $x_n$ are assumed to be normalized. 
Then the following hold:
\begin{enumerate}
\item
The derivative at v acting on $\Delta v\in \Rnm$ is
$
 \dot \cVo(v)(\Delta v) := \cVo^\prime(v)(\Delta v) = M\Diag(V\Delta v).
$
\item
The gradient of the composite function $\kappa(v):=\kappa(\cVo(v))$ is
\begin{equation}\label{gradient of V}
 \nabla \kappa(v) = \kappa(v)V^T
  \left( \frac 1{x_1^T(w\bullet x_1)}(x_1\bullet x_1) 
     - \frac 1{x_n^T(w\bullet x_n)}(x_n\bullet x_n) \right).
\end{equation}
\vspace{2pt}
\end{enumerate}
\end{lemma}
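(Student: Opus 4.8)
The plan is to derive both formulas by a single chain-rule computation, writing $\cVo$ as the affine reparametrization $v\mapsto e+Vv$ followed by the linear transformation $\cDo$ of \Cref{linear transform}, and then quoting the derivative formulas already established in \Cref{lem:optkappacond} and \Cref{thm:optcondkappa}. Item~1 is immediate: $\cVo$ is the composition $v\mapsto e+Vv\mapsto \Diag(e+Vv)\mapsto A\Diag(e+Vv)$ of an affine map with two linear maps, hence it is affine in $v$ with constant linear part, and differentiating term by term gives $\cVo^\prime(v)(\Delta v)=A\Diag(V\Delta v)$, independently of the base point $v$.

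For item~2, first record that $\cVo(v)=\cDo(w)$ with $w:=e+Vv\in\Rnpp$, so $\kappa(v)=(\kappa\circ\cDo)(w)$. Since by hypothesis $\lambda_1$ and $\lambda_n$ of $\cVo(v)$ are simple, \Cref{lem:eigderiv}, together with the identification of the left eigenvectors in \Cref{lem:eigsADconvconc}, shows that $\lambda_{\max}$ and $\lambda_{\min}$, viewed as composite functions of $w$, are Fr\'echet differentiable near $w$; moreover $\lambda_{\min}(\cDo(w))>0$ there because $w\in\Rnpp$ (cf.\ \Cref{lem:KposdefInertia}), so $\kappa\circ\cDo=\lambda_{\max}(\cDo(\cdot))/\lambda_{\min}(\cDo(\cdot))$ is Fr\'echet differentiable at $w$ and the chain rule applies. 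The Jacobian of $v\mapsto e+Vv$ is $V$, whose adjoint is $V^T$, so
\[
\langle\nabla\kappa(v),\Delta v\rangle=\langle\nabla_{w}(\kappa\circ\cDo)(w),\,V\Delta v\rangle=\langle V^{T}\nabla_{w}(\kappa\circ\cDo)(w),\,\Delta v\rangle,
\]
which gives $\nabla\kappa(v)=V^{T}\nabla_{w}(\kappa\circ\cDo)(w)$.

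It then remains to substitute the $w$-gradient from \Cref{thm:optcondkappa} (equivalently, to combine the two eigenvalue-gradient formulas of \Cref{lem:optkappacond} via the quotient rule), namely, with $D=\Diag(w)$,
\[
\nabla_{w}(\kappa\circ\cDo)(w)=\kappa(\cDo(w))\left(\frac{1}{x_1^{T}Dx_1}(x_1\bullet x_1)-\frac{1}{x_n^{T}Dx_n}(x_n\bullet x_n)\right),
\]
where $x_1,x_n$ are the normalized right eigenvectors of the simple eigenvalues $\lambda_1,\lambda_n$ of $\cDo(w)=\cVo(v)$. Using $x_i^{T}Dx_i=x_i^{T}\Diag(w)x_i=x_i^{T}(w\bullet x_i)$ and $\kappa(\cDo(w))=\kappa(\cVo(v))=\kappa(v)$ yields exactly \Cref{gradient of V}. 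The main point requiring care is not a deep one: it is the justification that the multiplicity-one assumption upgrades the subdifferential statements of \Cref{lem:optkappacond} to genuine Fr\'echet differentiability, so that the chain rule for the composite $\kappa\circ\cDo\circ(\text{affine})$ is legitimate, together with the bookkeeping that the eigenvectors appearing in the $w$-gradient are precisely the normalized eigenvectors of $\cVo(v)$ named in the statement and that $\kappa$ evaluated along the chain equals $\kappa(v)$.
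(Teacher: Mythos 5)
Your proposal is correct and follows essentially the same route as the paper: item~1 is the same observation that $\cVo$ is affine, and item~2 is the same chain-rule/quotient-rule computation, the only difference being that you factor through the already-established $d$-gradient of $\kappa\circ\cDo$ (\Cref{thm:optcondkappa}, whose proof gives the formula with the correct $x_1^TDx_1$ in the first denominator) and the adjoint $V^T$ of the affine reparametrization, whereas the paper re-derives $\nabla(\lambda\circ\cVo)$ directly via the trace manipulation with the left eigenvector $y=Dx/(x^TDx)$. Your added remark that simplicity of $\lambda_1,\lambda_n$ upgrades the statement to genuine Fr\'echet differentiability is a point the paper leaves implicit, and the scale-invariance of $\tfrac{\lambda}{x^TDx}\,x\bullet x$ makes the normalization bookkeeping harmless.
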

\vspace{-9pt}
\begin{proof}
\begin{enumerate}
\item
The proof follows from the function being affine.
\item
For a singleton eigenvalue $\lambda$ with normalized
right eigenvector $x$, we have the left eigenvector $y=Dx/(x^TDx)$ as in 
\Cref{lem:eigsADconvconc}, which satisfies $x^Ty=1$. Then
\[
\begin{array}{rcl}
\langle \nabla (\lambda\circ \cVo)(v),\Delta v\rangle
&=&
y^T \dot \cVo(v)(\Delta v) x=y^T (M\Diag(V\Delta v) )  x
\\&=&
\langle V^T\diag(\lambda D^{-1}yx^T), \Delta v   \rangle
\\&=&
\frac 1{x^TDx}\langle V^T\diag(\lambda D^{-1}Dxx^T), \Delta v   \rangle
\\&=&
\frac 1{x^T(w\bullet x)}\langle V^T\diag(\lambda xx^T), \Delta v   \rangle
=\frac 1{x^T(w\bullet x)}\langle \lambda V^T(x\bullet x), \Delta v   \rangle.
\end{array}
\]

Therefore the gradient of $\kappa(v):=\kappa(\cVo(v))$ is:
\begin{equation}
\label{eq:evalgradkappa}
\begin{array}{rcl}
\nabla \kappa(v)
&=&
\frac 
   {\lambda_n \dot \lambda_1 -\lambda_1 \dot \lambda_n  } 
{\lambda_n^2}=\frac 1{\lambda_n^2}  
\left(
\frac {\lambda_n}{x_1^T(w\bullet x_1)} \lambda_1 V^T(x_1\bullet x_1)
-
\frac {\lambda_1}{x_n^T(w\bullet x_n)}
\lambda_n V^T(x_n\bullet x_n)
\right)
\\&=&
\kappa(v)V^T
  \left( \frac {1}{x_1^T(w\bullet x_1)}(x_1\bullet x_1) 
         - \frac {1}{x_n^T(w\bullet x_n)}(x_n\bullet x_n) \right).
\end{array}
\end{equation}
\end{enumerate}
\end{proof}

The following remark
shows that, as in \Cref{cor:optandgradcVo}, $\nabla \kappa(v)$ 
lies in the quasisubdifferential of $\kappa(v)$.

\begin{remark}\label{Quasisubdiff Nonhomogeneous}
	Let $M\succ 0$ and $v\in \R^{n-1}$. Also, suppose that $w=e_n+Vv$ and let
	$(x_i, \lambda_i)$, $i=1,n$, be eigenpairs of $\cVo(v)$, where $\|x_i\|=1$. It then follows from
	\Cref{Refined Quasisubdiff} and a similar argument as in the
	proof of \Cref{Cor:Translation}
	that
	\begin{equation}\label{Subdiff Inclusion}
	\kappa(v)V^T
    \left( \frac 1{x_1^T(w\bullet x_1)}(x_1\bullet x_1) 
     - \frac 1{x_n^T(w\bullet x_n)}(x_n\bullet x_n) \right) \in {\bar{\partial}}^{\circ} \left(\kappa(v)\right)
	\end{equation}
where recall that $\kappa(v):=\kappa(\cVo(v))$.
\end{remark}
We now present our subgradient algorithm for minimizing \Cref{Simplex Formulation}.

\begin{algorithm}[H]
\caption{A Subgradient Method for Minimizing $\kappa(v):=\kappa(\cVo(v))$ over $\hat\Omega$}
\label{Subgrad V}
\begin{algorithmic}[1]
\REQUIRE symmetric positive definite matrix $M \succ 0$; $V\in \R^{n \times (n-1)}$ 
a basis matrix for the
orthogonal complement $e^\perp$;
sequence of stepsizes $\{t_k\}=1/\sqrt{k}$; scalar $\hat \delta\in(0,1)$; a tolerance tol$>0$; a rule for the stopping criterion, stopcrit.
\STATE set $k \gets 0$;
\STATE set stopcrit $\gets\infty$;
\STATE set $v_1=0 \in \R^{n-1}$ and $w_1=e_n \in \Rnp$;
\WHILE{stopcrit$>$tol}
\STATE  set $k\leftarrow k+1$.
\STATE compute min eigenpair $(\lambda_n^{k},x_n^{k})$ and max eigenpair $(\lambda_1^{k},x_1^{k})$ of $M\Diag(w_{k})$;
\STATE
compute direction
\begin{equation}\label{Dir Nonhm}
g_k=\frac{\lambda_1^{k}}{\lambda_n^{k}}V^{T}\left(\frac{1}{\langle x_1^{k}, w_k\bullet x_1^{k} \rangle}\left(x_1^{k}\bullet x_1^{k}\right)- \frac{1}{\langle x_n^{k}, w_k\bullet x_n^{k} \rangle}\left(x_n^{k}\bullet x_n^{k}\right) \right)
\end{equation}
\STATE perform projected gradient step
\begin{equation}\label{Descent Nonh}
v_{k+1}=\Pi_{\hat \Omega}\left(v_k-t_k\frac{g_k}{\|g_k\|}\right)
\end{equation}
\hspace{\algorithmicindent}where $\hat \Omega$ is as in \Cref{Simplex Def} and set 
\begin{equation}
	w_{k+1}=e+Vv_{k+1};
\end{equation}


\STATE update stopcrit;

\ENDWHILE (main outer loop)

\ENSURE $\hat D:=\Diag(w_{k+1})$.

\end{algorithmic}
\end{algorithm}

Several remarks about \Cref{Subgrad V} are now given. First, the matrix 
$V$ does not need to be stored in memory and is actually not needed as input.
All the user needs to input is a subroutine that outputs $Vv$ given a vector $v\in \R^{n-1}$.
Likewise, the matrix $M\Diag(w_{k})$ does not need to be stored.
Second, it follows
from \Cref{Quasisubdiff Nonhomogeneous} that $g_k \in  {\bar{\partial}}^{\circ} \left(\kappa(v_k)\right)$,
which 
is defined in \Cref{Quasisubdiff}. Finally, the projected gradient 
step in \Cref{Descent Nonh} can be performed very efficiently
since projecting onto $\hat \Omega$ just involves computing the root 
of a simple equation.

\subsubsection{Nonasymptotic Convergence  Rate Analysis for Minimizing $\kappa(v)$ on $\hat \Omega$}
In this subsection, we show a nonasymptotic convergence rate for \Cref{Subgrad V} for minimizing
$\kappa(v)$ over $\hat \Omega$. More specifically,
we show that
\[\min_{1\leq k \leq K}\kappa(v_k)-\kappa_{*} \leq \epsilon\]
holds for $K=\mathcal O(1/\epsilon^2)$, where $\kappa_{*}=\min_{v\in \hat \Omega} \kappa(v)$. 
Our nonasymptotic convergence analysis of \Cref{Subgrad V} relies mostly on the results 
from \cite{kiwiel2001convergence}
and \cite{hu2020convergence}. The function $\kappa(v)$ is pseudoconvex
since it is the ratio of a convex function and a positive concave function. Also,
observe that the set $\hat \Omega$ is just a box so it is a convex compact
set that is easy to project onto. The only other assumption that needs
to be verified to be able to show a nonasymptotic convergence rate of \Cref{Subgrad V}
is that the function $\kappa(v)$ is Lipschitz continuous on
$\hat \Omega$. This result is proved in the following proposition.

\begin{prop}\label{Lipschitz Prop}
	The function $\kappa(v)$ is Lipschitz continuous on 
	$\hat \Omega$, i.e.,
    \[
	\|\kappa(v_1)-\kappa(v_2)\|\leq L \|v_1-v_2\|, \quad \forall v_1, v_2 \in \hat \Omega
	\]
where $L>0$ and $\hat \Omega$ is as in \Cref{Simplex Def}.
\end{prop}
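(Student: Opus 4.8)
The plan is to establish Lipschitz continuity of $\kappa(v):=\kappa(\cVo(v))=\lambda_{\max}(\cVo(v))/\lambda_{\min}(\cVo(v))$ on the compact box $\hat\Omega$ by showing that the numerator is Lipschitz and bounded above, and the denominator is Lipschitz and bounded below away from zero, on $\hat\Omega$; the quotient rule for Lipschitz functions then finishes the argument. First I would recall from \Cref{lem:eigsADconvconc} that $\lambda_{\max}(\cVo(v))$ is convex and $\lambda_{\min}(\cVo(v))$ is concave in $v$, hence both are continuous; since $\hat\Omega$ is compact, both attain finite positive extrema on $\hat\Omega$, so there exist $0<m\le M<\infty$ with $m\le\lambda_{\min}(\cVo(v))\le\lambda_{\max}(\cVo(v))\le M$ for all $v\in\hat\Omega$. (For the lower bound $m>0$ one can also invoke the explicit estimate $\lambda_{\min}(\cVo(v))\ge\min_i A_{ii}\cdot(\text{entrywise lower bound on }w)>0$ as in \Cref{Remark Homogeneity}, using that $w=e+Vv\ge\hat\delta e$ on $\hat\Omega$.)

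Next I would prove that $v\mapsto\lambda_{\max}(\cVo(v))$ and $v\mapsto\lambda_{\min}(\cVo(v))$ are globally Lipschitz on all of $\Rnm$ (in particular on $\hat\Omega$). The map $v\mapsto\cVo(v)=A\Diag(e+Vv)$ is affine, hence Lipschitz from $\Rnm$ (Euclidean norm) to $\Sn$ or $\Mn$; more precisely $\|\cVo(v_1)-\cVo(v_2)\|_F=\|A\Diag(V(v_1-v_2))\|_F\le\|A\|_2\,\|V\|_2\,\|v_1-v_2\|$. On the other hand, using $\lambda_i(A\Diag(w))=\lambda_i(A^{1/2}\Diag(w)A^{1/2})$ and that $B\mapsto\lambda_{\max}(B)$, $B\mapsto\lambda_{\min}(B)$ are $1$-Lipschitz on $\Sn$ with respect to the operator (hence Frobenius) norm by Weyl's inequality, one gets
\[
|\lambda_{\max}(\cVo(v_1))-\lambda_{\max}(\cVo(v_2))|
\le \|A^{1/2}\Diag(V(v_1-v_2))A^{1/2}\|_2
\le \|A\|_2\,\|V\|_2\,\|v_1-v_2\|,
\]
and identically for $\lambda_{\min}$. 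So both numerator and denominator are Lipschitz with an explicit constant $L_0:=\|A\|_2\|V\|_2$ (indeed with $\|V\|_2=1$ for the specified $V$).

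Finally I would combine these facts via the standard quotient estimate: for positive functions $a,b$ on $\hat\Omega$ with $a\le M$, $b\ge m$, and $|a(v_1)-a(v_2)|\le L_0\|v_1-v_2\|$, $|b(v_1)-b(v_2)|\le L_0\|v_1-v_2\|$,
\[
\left|\frac{a(v_1)}{b(v_1)}-\frac{a(v_2)}{b(v_2)}\right|
=\left|\frac{a(v_1)b(v_2)-a(v_2)b(v_1)}{b(v_1)b(v_2)}\right|
\le \frac{b(v_2)|a(v_1)-a(v_2)|+a(v_2)|b(v_2)-b(v_1)|}{m^2}
\le \frac{(M+M)L_0}{m^2}\|v_1-v_2\|,
\]
so $L:=2ML_0/m^2$ works. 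The only mildly delicate point — the main obstacle — is the lower bound $m>0$ on $\lambda_{\min}(\cVo(v))$ over $\hat\Omega$: this rests on the fact that $\hat\Omega$ forces $w=e+Vv\ge\hat\delta e>0$ entrywise, so $\Diag(w)\succ0$ and hence $\cVo(v)=A\Diag(w)$ has all positive eigenvalues by \Cref{lem:KposdefInertia}, and compactness of $\hat\Omega$ with continuity of $\lambda_{\min}(\cVo(\cdot))$ then yields the strictly positive minimum. Everything else is a routine assembly of Weyl's inequality, the affine bound on $\cVo$, and the quotient rule.
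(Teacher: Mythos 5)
Your proposal is correct, and its overall architecture matches the paper's: show the numerator $\lambda_{\max}(\cVo(\cdot))$ is Lipschitz and bounded, show the denominator $\lambda_{\min}(\cVo(\cdot))$ is Lipschitz and bounded below away from zero on the compact set $\hat\Omega$, and combine. The genuine difference is in how you obtain Lipschitz continuity of the two eigenvalue maps. The paper argues qualitatively: $\lambda_{\max}(v)$ is convex (by \Cref{lem:eigsADconvconc}), convex functions are locally Lipschitz on the interior of their domain, and a compact subset of that interior (here $\hat\Omega$) yields a uniform Lipschitz constant via a cited result from the literature; the concave case is symmetric, and the quotient is handled by composing with $g(y)=1/y$ on a positive interval and then using the product rule for bounded Lipschitz functions. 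You instead exploit the similarity $\lambda_i(A\Diag(w))=\lambda_i(A^{1/2}\Diag(w)A^{1/2})$ together with Weyl's inequality (the $1$-Lipschitzness of $\lambda_{\max},\lambda_{\min}$ on $\Sn$ in operator norm) and the affineness of $v\mapsto A^{1/2}\Diag(e+Vv)A^{1/2}$ to get a \emph{global, explicit} Lipschitz constant $L_0=\norm{A}_2\norm{V}_2$ for both eigenvalue maps, then finish with the quotient rule. Your route buys an explicit constant $L=2ML_0/m^2$ (and in fact $m\ge\hat\delta\,\lambda_{\min}(A)$ is also explicit from $w\ge\hat\delta e$), whereas the paper's route requires no spectral perturbation theory, only convexity, and generalizes to situations where an explicit affine symmetrization is unavailable. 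You also correctly isolate the one delicate point, the strict positivity of $\lambda_{\min}$ over $\hat\Omega$, and justify it via $w=e+Vv\ge\hat\delta e$, \Cref{lem:KposdefInertia}, and compactness, which the paper handles only implicitly.
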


\begin{proof}
First, it is easy to see that $\lambda_{\max}(\cVo(v))$ (resp. $\lambda_{\min}(\cVo(v))$) is a convex (resp. concave) function.
It then follows from this observation, the fact $\hat \Omega$ is a compact set that is contained in the domain of both functions,
and Proposition A.48(b) in \cite{lee2003smooth} that $\lambda_{\max}(\cVo(v))$ (resp. $\lambda_{\min}(\cVo(v))$) is $L_1$-Lipschitz (resp. $L_2$-Lipschitz)
on $\hat \Omega$.
Consider now the composite function
$h(v)=g(\lambda_{\min}(\cVo(v)))$ where $g(y)=1/y$.
The above conclusion and the fact
that $g(y)=1/y$
is Lipchitz continuous on any positive open interval
then imply that $h$ is $L_3$-Lipschitz continuous on $\hat \Omega$, where $L_3>0$.


We now show that $\kappa(v)$ is Lipschitz continuous. 
First, it holds that
$|\lambda_{\max}(v)|\leq M_1$ and $|h(v)|\leq M_2$ for $v\in \hat \Omega$
since a Lipschitz function on a compact set is bounded.
Then, for any $v_1 \in \hat \Omega$ and $v_2 \in \hat \Omega$, the following holds:

\begin{align*}
    \|\kappa(\cVo(v_1))-\kappa(\cVo(v_2))\|&=\|\lambda_{\max}(\cVo(v_1))h(v_1)-\lambda_{\max}(\cVo(v_2))h(v_2)\|\\
	&=\|\lambda_{\max}(\cVo(v_1))h(v_1)-\lambda_{\max}(\cVo(v_1))h(v_2)+\lambda_{\max}(\cVo(v_1))h(v_2)-\lambda_{\max}(v_2)h(v_2)\|\\
	&\leq M_1\|h(v_1)-h(v_2)\|+M_2\|\lambda_{\max}(\cVo(v_1))-\lambda_{\max}(\cVo(v_2))\|\\
	&\leq M_1L_3\|v_1-v_2\|+M_2L_1\|v_1-v_2\|=(M_1L_3+M_2L_1)\|v_1-v_2\|,\\
\end{align*}
which immediately implies the statement of the proposition with $L=M_1L_3+M_2L_1$.

\end{proof}

We now state \Cref{Nonasymptotic Theorem}, which displays that \Cref{Subgrad V}
is able to find an $\epsilon$-approximate optimal solution of \Cref{Simplex Formulation}
with a sublinear $\mathcal O(1/\epsilon^2)$ rate of convergence. 

\begin{theorem}\label{Nonasymptotic Theorem}
Let $\epsilon>0$ be a given tolerance
and suppose that $K=\mathcal O(1/\epsilon^2)$. It then holds that
\begin{equation}\label{Sublinear Complexity Result}
\min_{1\leq k\leq K} \kappa(v_k)-\kappa_{*} \leq \epsilon,
\end{equation}
where $\kappa_{*}=\min_{v\in \hat \Omega} \kappa(v)$ and $\kappa(v):=\kappa(\cVo(v))$. 
\end{theorem}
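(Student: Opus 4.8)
The plan is to show that \Cref{Subgrad V} is a particular instance of the normalized subgradient method for quasiconvex minimization analyzed in \cite{hu2020convergence} (whose analysis in turn rests only on \cite[Lemmas~6 and~14]{kiwiel2001convergence}), and then to invoke the corresponding $\mathcal O(1/\epsilon^2)$ complexity bound. Concretely, I would verify the required hypotheses in the following order. \emph{Feasible set:} $\hat\Omega$ in \Cref{Simplex Def} is a box, hence convex and compact and cheap to project onto, and the starting point $v_1=0$ lies in $\hat\Omega\cap\Int\bar D^{\kappa}$ because $\hat\delta<1$ forces $w_1=e>\hat\delta e$. \emph{Objective:} $\kappa(v)=\kappa(\cVo(v))=\lambda_{\max}(\cVo(v))/\lambda_{\min}(\cVo(v))$ is pseudoconvex, hence quasiconvex, on $\hat\Omega$, since by \Cref{lem:eigsADconvconc} it is the ratio of a convex function to a strictly positive concave function; and by \Cref{Lipschitz Prop} it is $L$-Lipschitz on $\hat\Omega$. \emph{Search direction:} by \Cref{Quasisubdiff Nonhomogeneous}, the vector $g_k$ in \cref{Dir Nonhm} satisfies $g_k\in{\bar\partial}^{\circ}\kappa(v_k)$, so the update \cref{Descent Nonh} is exactly $v_{k+1}=\Pi_{\hat\Omega}(v_k-t_k g_k/\|g_k\|)$ with a normalized quasisubgradient, matching the iteration of \cite{hu2020convergence} (when $g_k=0$ the iterate is already a global minimizer by pseudoconvexity and there is nothing to prove). \emph{Stepsizes:} $t_k=1/\sqrt k$ is precisely the diminishing rule for which, combined with Lipschitzness, those references certify the $\mathcal O(1/\sqrt K)$ decay of $\min_{1\le k\le K}\kappa(v_k)-\kappa_{*}$, i.e.\ it suffices to take $K=\mathcal O(1/\epsilon^2)$ to obtain \cref{Sublinear Complexity Result}.

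The one point needing genuine (if minor) care is that the rate theorem in \cite{hu2020convergence} is stated under the Lipschitz-type error-bound condition $\|\kappa(v)-\kappa_{*}\|\le L\|v-v_{*}\|$ for \emph{all} $v$, whereas \Cref{Lipschitz Prop} only gives Lipschitz continuity of $\kappa$ on the compact set $\hat\Omega$. As already indicated in the discussion preceding the theorem, this is not a real gap: the proof in \cite{hu2020convergence} invokes only \cite[Lemmas~6 and~14]{kiwiel2001convergence}, and every inequality in that argument evaluates $\kappa$ at iterates $v_k$ (which remain in $\hat\Omega$ because of the projection $\Pi_{\hat\Omega}$) or at a minimizer $v_{*}$, which exists and can be taken in $\hat\Omega$ by compactness together with continuity of $\kappa$ on $\hat\Omega$. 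Hence restricting the error-bound hypothesis to $\hat\Omega$ leaves the entire chain of estimates intact, and the conclusion $\min_{1\le k\le K}\kappa(v_k)-\kappa_{*}\le\epsilon$ for $K=\mathcal O(1/\epsilon^2)$ follows.

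I therefore expect the proof to be short: assemble the four checks above (each already established in the excerpt), point out that the iterate sequence and an optimal $v_{*}$ never leave $\hat\Omega$, and quote the complexity theorem of \cite{hu2020convergence} (with \cite{kiwiel2001convergence} as the underlying source). The main ``obstacle'' is purely bookkeeping — confirming that no step of the borrowed analysis requires evaluating $\kappa$ outside $\hat\Omega$ — rather than any new estimate.
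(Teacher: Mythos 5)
Your proposal is correct and follows essentially the same route as the paper: verify that $\hat\Omega$ is convex and compact, that $\kappa(v)$ is quasiconvex as a ratio of a convex and positive concave function and Lipschitz on $\hat\Omega$ by \Cref{Lipschitz Prop}, that the update is a normalized quasisubgradient step by \Cref{Quasisubdiff Nonhomogeneous}, and then invoke Theorem~3.2(ii) of \cite{hu2020convergence} with the stepsize $t_k=1/\sqrt{k}$. Your careful remark about restricting the Lipschitz-type condition to $\hat\Omega$ is exactly the point the paper itself makes in the discussion preceding the theorem.
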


\begin{proof}
It is immediate to see that $\hat \Omega$ is a convex compact set and that
$\kappa(v)$ is a quasiconvex function since it is the ratio of a convex
and a positive concave function. It follows from 
\Cref{Lipschitz Prop} that $\kappa(v)$ is Lipschitz continuous on
$\hat \Omega$. Finally, it follows from \Cref{Quasisubdiff Nonhomogeneous} 
that the update \Cref{Descent Nonh} in step 8
of \Cref{Subgrad V} is of the form
$v_{k+1}=\Pi_{\hat \Omega}\left(v_k-t_k\frac{g_k}{\|g_k\|}\right)$
where $g_{k} \in {\bar{\partial}}^{\circ} \left(\kappa(v_k)\right)$. Hence, it follows from these observations, the fact
that $\left\{t_k\right\}=1/\sqrt{k}$, 
and Theorem 3.2(ii) in \cite{hu2020convergence} with $s=1/2$, $\delta=\epsilon$, and $p=1$
that the statement of \Cref{Nonasymptotic Theorem} holds.

\end{proof}

\section{$\omega$-Optimal Structured Preconditioning}
\phantomsection
\label{sect:optomegadiagproc}

This section focuses on obtaining 
$\omega$-optimal preconditioners of special structure for finding
least squares solutions of $Ax=b$, where $A$  might not  be symmetric nor square. In what follows, $A$ will be  assumed to have full column rank, so that the matrix \textdef{$M:=A^{T}A$} is positive definite.  Recall that for $M\succ 0$, we define $\omega(M)$ as
\[
\omega(M) = \frac {\trace(M)/n}{\det(M)^{1/n}},
\]
i.e., it is the ratio of the arithmetic to geometric
means of the eigenvalues of $M$. Below, this measure will be used to study the conditioning of systems $Ax=b$, when $A$ is not necessarily positive definite. 

This section is divided into three main subsections. First,
\Cref{sect:rightsided} focuses on right-sided 
$\omega$-optimal diagonal preconditioning, for both symmetric positive definite and full rank matrices. We treat left-sided
$\omega$-optimal diagonal preconditioning  for invertible matrices in \Cref{sect:leftsided}. 
Then in \Cref{sect:twosided} we focus on two-sided $\omega$-optimal
diagonal preconditioning.
Finally, \Cref{sect:optblockdiag} considers $\omega$-optimal block diagonal
preconditioning for non-square matrices. In particular, we focus on preconditioners
with triangular blocks.

Interestingly, we see that many popular preconditioners used in the literature can
be found as being $\omega$-optimal with special structure constraints, 
as illustrated in~\Cref{table:omegaoptpreconds}.

\begin{table}[t]
\scriptsize
    \caption{Relation of $\omega$-Optimal Preconditioners to the Literature}
    \centering
    \begin{tabular}{|c|c|c|c|}
        \hline
        \textbf{Special Structure} & \textbf{Preconditioner} &
\textbf{Location} & \textbf{Literature} \\
\hline
       Right Diagonal   & Column Normalization  & \Cref{prop:omegaoptDAD}&
    \cite{DeWo:90,DoanW:11} \\
	\hline 
	Diagonal for Symmetric $M\succ 0$  
	     & Jacobi Preconditioner & \Cref{cor:twosidedposdefJacobi}&
    \cite{Jacobi:45}\\
        \hline
        Left Diagonal   & Row Normalization  & \Cref{thm:optleftprecond}&
                            \\
	    \hline
        Two-Sided Diagonal   & Sinkhorn--Knopp/Matrix Balancing & \Cref{thm:twosidedinvert}&
    \cite{soules1991rate,knight2008sinkhorn,scetbon2025gradient}  \\
        \hline
	Right Block-Diagonal &Block QR&
	   \Cref{cor:partialqr}   &\cite{JungTorrWolkOmega2024,DoanW:11} \\
        \hline
	Left Block-Diagonal &Characterization &
	   \Cref{thm:leftblkgeneral}   &  \\
	\hline
    \end{tabular}
    \label{table:omegaoptpreconds}
\end{table}

\subsection{Right- and Left-Sided Diagonal}
\label{RightLeftDiagSec}
Suppose that $\diag(A)\neq 0$.  The classical Jacobi preconditioner
\cite{Jacobi:45}, \cite[Sect. 10.2]{saad2019iterative}  uses:
\begin{equation}
\label{eq:jacobiprec}
P^{-1}A = P^{-1}b, \quad P = \Diag(\diag(A)).
\end{equation}

We note that this can be found by using: a splitting $A=M-N$ so that
$A=\Diag(\diag(A))-N$; or
the following variational problem that 
implicitly finds the best approximation of the identity using diagonal matrices:
\[
\min_d \|A-\Diag(d)\|_F, \quad \Diag(d)^{-1}A \approx I.
\]
We now consider variational problems using $\omega$.

\subsubsection{Right-Sided  Diagonal}
\label{sect:rightsided}
The following result shows that a $\omega$-optimal
right-sided diagonal scaling of an overdetermined full rank matrix $A$ 
is formed using ($\pm$) the reciprocal of the column norms of $A$, see \cite{DeWo:90}.
\begin{prop}
\label{prop:omegaoptDAD}
Let $A$ be $m\times n$ full column rank, and
$D = \Diag(\diag(A^TA))^{-1}$.
Then a $\omega$-optimal diagonal right scaling is $A D^{1/2}$, i.e.,~
\[
D \in \argmin \left\{\omega(D^{1/2}A^TAD^{1/2}) : 
\diag(D) \in \Rnpp \right\}.
\]
\end{prop}
\begin{proof}
The proof of the result can be found in 
\cite[Proposition 2.1(v)]{DeWo:90} and a modified proof in
\cite[Proposition 2.1(3)]{JungTorrWolkOmega2024}.
\end{proof}

\begin{cor}
\phantomsection
\label{cor:twosidedposdefJacobi}
Let $M\in \Snpp$. Then the Jacobi preconditioner, $D=\Diag(\diag(M))^{-1}$,
is the $\omega$-optimal diagonal scaling, i.e.,
\[D\in \argmin \left\{\omega(D^{1/2}MD^{1/2}) : 
 \diag(D) \in \Rnpp \right\}.\]

\end{cor}
\begin{proof}
The proof is immediate from \Cref{prop:omegaoptDAD} with 
$A^{T}A$ replaced by $M\in \Snpp$.
\end{proof}

\subsubsection{Left-Sided Diagonal}
\label{sect:leftsided}
In this section we let $A\in \Mn$ be invertible, and
we consider the optimal left-sided diagonal preconditioning problem:
\begin{equation}
\label{eq:leftsidedoptomega}
\begin{array}{rcll}
& \min &
         \omega\big((\Diag(c)^{1/2}A)^T\Diag(c)^{1/2}A\big)
\\ & \text{s.t.} & c\in\Rnpp.
\end{array}
\end{equation}
For simplicity, let $C:=\Diag(c)$.
We can characterize the $\omega$-optimal left-sided diagonal
preconditioner using the above results in \Cref{sect:rightsided} 
for the right preconditioner.
\begin{theorem}
\label{thm:optleftprecond}
	Let $A\in \Mn$ be nonsingular. Then an $\omega$-optimal
left-sided diagonal preconditioner minimizing
\cref{eq:leftsidedoptomega} is given by $C = \Diag(\diag(AA^T))^{-1}$.
\end{theorem}
\begin{proof}
	Note that
	\[
	\omega\big((\Diag(c)^{1/2}A)^T\Diag(c)^{1/2}A\big) = \omega\big(A^T\Diag(c)A\big) = \omega\big(\Diag(c)^{1/2}AA^{T}\Diag(c)^{1/2}\big).
	\]
	Hence, \Cref{prop:omegaoptDAD} and the above equivalence implies that
	\begin{equation*}
		\begin{array}{rcl}
			C = \Diag(\diag(AA^T))^{-1} &\iff& C \in \argmin \left\{\omega(C^{1/2}AA^{T}C^{1/2}) : C = \diag(c) \in \Snpp \right\} \cr
			&\iff& C \in \argmin \left\{\omega\big((\Diag(c)^{1/2}A)^T\Diag(c)^{1/2}A\big) : C = \diag(c) \in \Snpp \right\}.
		\end{array}
	\end{equation*}
\end{proof}

\subsection{Two-Sided Diagonal}
\label{sect:twosided}
We now consider the problem of finding the two-sided $\omega$-optimal
 diagonal scaling for nonsingular matrices $A\in \Mn$.
This subsection is broken up into two smaller
parts. The first part derives the optimality conditions for
the two-sided $\omega$-optimal scaling problem. 
The second part presents an 
iterative matrix balancing scheme that reduces $\omega$ at every iteration
and whose output is proved 
to be a stationary point of the two-sided $\omega$-optimal scaling problem. 

\subsubsection{Optimality Conditions for Two-Sided Problem}
This part presents the formulation of the two-sided $\omega$-optimal diagonal scaling 
problem and derives its optimality conditions.
Namely, we consider
\begin{equation}
\label{eq:twosidedoptomegaprob}
\begin{array}{rcll}
& \min &
         \omega((\Diag(c)^{1/2}A\Diag(d)^{1/2})^T(\Diag(c)^{1/2}A\Diag(d)^{1/2}))
\\ & \text{s.t.} & c,d\in\Rnpp,
\end{array}
\end{equation}
where $A\in \Mn$ is nonsingular.
For simplicity, let $C:=\Diag(c)$ and $D:=\Diag(d)$.
We also define 
\begin{equation}\label{two sided definitions}
\textdef{$\cQ(c,d) := A^T\Diag(c)A\Diag(d)$},\,\,
\textdef{$f(c,d):= \frac{1}{n}\trace(\cQ(c,d))$},\,\,
\textdef{$g(c,d):= \det(\cQ(c,d))^{1/n}$}.
\end{equation}
Therefore we have the following equivalent problem to \Cref{eq:twosidedoptomegaprob}:
\begin{equation}
\label{equivtwo-sided}
\begin{array}{rcll}
&\min &\textdef{$\omega_{\mathcal M}(c,d):=\frac{f(c,d)}{g(c,d)}$}
\\ & \text{s.t.} & c,d\in\Rnpp.
\end{array}
\end{equation}
Finally, for notational convenience, we define $\mathrm{inv}:\R^{2n}\to\R^{2n}$ by
\begin{equation}\label{cinvdinv}
\textdef{$\mathrm{inv}(c,d) = \begin{pmatrix}
	\diag\left(\Diag(c)^{-1}\right)\cr
	\diag\left(\Diag(d)^{-1}\right)
	\end{pmatrix}$}.
\end{equation}

The following result provides the gradient of $\omega_{\mathcal M}(c,d)$.
\begin{prop}\label{Omega Two Sided Gradient}
	The gradient of $\omega_{\mathcal M}(c,d)$ is given by
	\begin{equation}\label{gradientOmegaTwoSided}
	\nabla \omega_{\mathcal M}(c,d)=\frac{1}{ng(c,d)} \begin{pmatrix}
\diag(A\Diag(d) A^T) \cr
\diag(A^T\Diag(c) A)
      \end{pmatrix}
- \frac 1n\omega_{\mathcal M}(c,d)\mathrm{inv}(c,d),
	\end{equation} 
where
$g(c,d)$ is as in \Cref{two sided definitions}, $\omega_{\mathcal M}(c,d)$
is defined in
\Cref{equivtwo-sided}, and $\mathrm{inv}(c,d)$ is as in \Cref{cinvdinv}.
\end{prop}
\begin{proof}
To compute $\nabla \omega_{\mathcal M}(c,d)$, we have to compute $\nabla f(c,d)$ and $\nabla g(c,d)$.
It is easy to see from the definition of $f(c,d)$ in \Cref{two sided definitions} that $\nabla f(c,d)$
can be computed as
\begin{equation}
\nabla f(c,d) = \frac{1}{n}
      \begin{pmatrix}
\diag(A\Diag(d) A^T) \cr
\diag(A^T\Diag(c) A)
      \end{pmatrix}.
\end{equation}
To compute $\nabla g(c,d)$,
observe that it follows from the definition of $g(c,d)$ in \Cref{two sided definitions} and the
formula for the gradient of the determinant that
\begin{equation*}
	\begin{array}{rcl}
		\langle \nabla g(c,d), (\Delta c, \Delta d)\rangle 
&=& 
		\det(A^TA)^{1/n}
\langle \nabla
\det\big(CD\big)^{1/n},(\Delta c,\Delta d)\rangle \\
		&=& \det(A^TA)^{1/n}
          \left\langle
\frac{1}{n}\det\big(CD\big)^{\frac{1}{n}-1}\adj\big(CD
          \big),
       (C\Delta D+\Delta CD)\right\rangle
	\\	&=& 
\frac{1}{n}g(c,d)\left\langle \big(CD\big)^{-1}, 
       (C\Delta D+\Delta CD)\right\rangle
		\\&=& 
\frac{1}{n}g(c,d)
    \trace \left( C^{-1} \Delta C + D^{-1} \Delta D\right).
	\end{array}
\end{equation*}
Hence, it follows from the above equations 
\begin{equation}\label{eq:gradg}
\nabla g(c,d)= 
\frac{1}{n}g(c,d)
\begin{pmatrix}
\diag\left(\Diag(c)^{-1}\right)\cr
\diag\left(\Diag(d)^{-1}\right)
 \end{pmatrix}
=\frac{1}{n}g(c,d)\mathrm{inv}(c,d).
\end{equation}
It then follows from \Cref{gradientOmegaTwoSided}, \Cref{eq:gradg}
and the quotient rule that 
$\nabla \omega_{\mathcal M}(c,d)$
can be computed as
\begin{align*}
\nabla \omega_{\mathcal M}(c,d)&=\frac{1}{g(c,d)^2}\left(g(c,d)\nabla f(c,d) - f(c,d)\nabla
g(c,d)\right)
=\frac{1}{g(c,d)} 
\nabla f(c,d) - \frac{\omega_{\mathcal M}(c,d)}{g(c,d)} \nabla g(c,d)\\
&= \frac{1}{ng(c,d)} \begin{pmatrix}
\diag(A\Diag(d) A^T) \cr
\diag(A^T\Diag(c) A)
      \end{pmatrix}
- 
\frac 1n\omega_{\mathcal M}(c,d)\mathrm{inv}(c,d).
\end{align*}

\end{proof}

\begin{remark}
It is easy to see that $\nabla \omega_{\mathcal M}(c,d)$
can be equivalently written in the useful form:
\[\nabla \omega_{\mathcal M}(c,d) = \frac{1}{n}\begin{bmatrix}
		0 & A\bullet A\\ (A\bullet A)^T & 0	\end{bmatrix}\begin{pmatrix}
			c \\d \end{pmatrix}- 
\frac 1n\omega_{\mathcal M}(c,d)\mathrm{inv}(c,d).
\]

\end{remark}

We now state our main theorem on a
necessary condition for a vector
to be globally optimal for \Cref{eq:twosidedoptomegaprob}.
This relates to optimal matrix balancing discussed below.

\begin{theorem}
\label{thm:twosidedinvert}
Let $A\in \Rnn$ be nonsingular.
Let $c^{*},d^{*}\in \Rnpp$ be
a global optimal solution pair of \Cref{eq:twosidedoptomegaprob}. 
Let $e_{2n} \in \mathbb R^{2n}$ be the vector of all ones, 
$C^{*}=\Diag(c^{*})$, and $D^{*}=\Diag(d^{*})$. Then:
\begin{equation}
\label{eq:neccondtwosided}
      \begin{pmatrix}
\diag(C^{*}AD^{*} A^T) \cr
\diag(D^{*}A^TC^{*} A)
      \end{pmatrix} = \alpha e_{2n}, \, \text{ for some  } \alpha>0.
\end{equation}
Hence, if $c^{*},d^{*}$ 
is a global optimal solution of \Cref{eq:twosidedoptomegaprob},
then the column and row norms of $\sqrt{C^{*}}A\sqrt{D^{*}}$
must all be equal to the same constant, i.e.,~the matrix is balanced.
\end{theorem}

\begin{proof}
For $c^{*},d^{*} \in \Rnpp$
to be a global optimal solution of \Cref{eq:twosidedoptomegaprob},
it must satisfy the necessary condition
$\nabla \omega_{\mathcal M}(c^{*},d^{*}) =0$.
Thus, it follows from \Cref{gradientOmegaTwoSided}
that $c^{*},d^{*}$
must satisfy 
\[
0=\frac{1}{ng(c^{*},d^{*})} \begin{pmatrix}
\diag(AD^{*} A^T) \cr
\diag(A^TC^{*} A)
      \end{pmatrix}
- 
\frac 1n\omega_{\mathcal M}(c^{*},d^{*})
\mathrm{inv}(c^{*},d^{*}).
\]
It follows from taking the Hadamard product with the vector
$\begin{pmatrix}
{\diag(C^{*})}\cr
{\diag(D^{*})}\cr
 \end{pmatrix}$
on both sides of the above equation as well as
multiplying both sides
by $ng(c^{*},d^{*})$
that the following necessary condition must hold
\begin{equation}\label{necCondforOpt}
      f(c^{*},d^{*})e_{2n}=\begin{pmatrix}
\diag(C^{*}AD^{*} A^T) \cr
\diag(D^{*}A^TC^{*} A)
      \end{pmatrix}.
\end{equation}
Observe that the definition of $f(c,d)$ in \Cref{two sided definitions}
implies that 
if $c^{*},d^{*}$ satisfies the above relation
then any positive scalar multiple 
of $c^{*},d^{*}$ also satisfies
\Cref{necCondforOpt}.
It then follows from this observation and \Cref{necCondforOpt}
that a globally optimal solution $c^{*},d^{*}$ must satisfy the necessary condition
\Cref{eq:neccondtwosided} for some scalar $\alpha>0$.
The last observation of the theorem then immediately follows from the condition in
\Cref{eq:neccondtwosided}.

\end{proof}

\begin{remark}
Suppose we consider a modification of \Cref{eq:twosidedoptomegaprob}, and consider the following optimization problem:
\begin{equation}\label{omegaOptProtectBoundary}
\begin{array}{rcll}
& \min &
         \omega((\Diag(c)^{1/2}A\Diag(d)^{1/2})^T(\Diag(c)^{1/2}A\Diag(d)^{1/2}))
\\ & \text{s.t.} & c\geq \delta e_{n}, d\geq \delta e_{n},
\end{array}
\end{equation}
where $\delta \in (0,1)$.
It is easy to see that a global optimal solution $c^{*},d^{*}$ of \Cref{omegaOptProtectBoundary} exists. This follows immediately from the fact that
$
\omega\big((\Diag(c)^{1/2}A\Diag(d)^{1/2})^T(\Diag(c)^{1/2}A\Diag(d)^{1/2})\big)
$
is a positively homogeneous function of degree 0, so \Cref{omegaOptProtectBoundary} is equivalent to minimizing a continuous function over a compact set.
\end{remark}

\subsubsection{Square-Root Sinkhorn--Knopp Algorithm for Two Sided
Problem}
Let a nonsingular matrix $A\in \Mn$ be given.
We have seen above that the $\omega$-optimal right-sided preconditioner is
equivalent to the column normalization preconditioner.
Similarly, the $\omega$-optimal left-sided preconditioner is
equivalent to the row normalization preconditioner.
Therefore, we can alternate between $\omega$-optimal right- and left-sided
preconditioners and decrease the value of $\omega$ at each iteration.
This yields the following computationally efficient matrix balancing
scheme where we alternate between column and row normalization.
This algorithm can be seen as equivalent to the Square-Root Sinkhorn--Knopp
algorithm and is related to other balancing methods in the literature 
\cite{soules1991rate,knight2008sinkhorn,scetbon2025gradient,MR3231982}.

We prove below in \Cref{thm:cgnce2sided}
that the Square-Root Sinkhorn-Knopp algorithm, namely \Cref{alg:Two-Sided}, converges and 
the outputted two-sided diagonal preconditioner satisfies the necessary condition 
in \Cref{eq:neccondtwosided} for global optimality of a two-sided $\omega$-optimal
preconditioner.
This continues the theme that the $\omega$-measure
provides justification for heuristics used in the literature.

\begin{algorithm}[H]
\caption{Square-Root Sinkhorn--Knopp Algorithm for Two Sided $\omega$-Optimal Preconditioner}
\label{alg:Two-Sided}
\begin{algorithmic}[1]
\REQUIRE A square nonsingular matrix $A \in \Mn$, a tolerance tol$>0$, and a rule for the 
stopping criterion, stopcrit. 
\STATE set stopcrit $\gets\infty$;
\STATE set $A_0 \gets A$;
\STATE set $k \gets 0$;
\WHILE{stopcrit$>$tol}
\STATE set $k \gets k+1$;
\STATE 
compute $(\sqrt{d_k})_{i}=1./\|(A_{k-1})_{:,i}\|$, $i=1,\ldots n$, where $(A_{k-1})_{:,i}$ is the $i$-th column of $A_{k-1}$;

\STATE set $\tilde A_k \gets A_{k-1}\Diag(\sqrt{d_k})$;
\STATE compute $\sqrt{c_k} \in \mathbb R^{n}$ with $(\sqrt{c_k})_{i}=1./\|(\tilde A_k)_{i,:}\|$, $i=1,\ldots n$, where $(\tilde A_k)_{i,:}$ is the $i$-th row of $\tilde A_k$;
\STATE update stopcrit;
\STATE set $A_{k} \gets \Diag(\sqrt{c_k})\tilde A_k$;

\ENDWHILE (main outer loop)

\ENSURE  \[\hat C^{1/2}:=\Diag\left(\prod_{j=1}^{k}
\sqrt{c_j}\right) \text{ and } \hat D^{1/2}:=\Diag\left(\prod_{j=1}^{k}
\sqrt{d_j}\right).\]
\end{algorithmic}
\end{algorithm}
There several viable options for the choice of stopcrit in
\Cref{alg:Two-Sided}. One natural choice is
\[
\text{stopcrit}=\max\left(
\max_i \left| \|(A_k)_{i,:}-1\|\right|,
\max_j \left| \|(A_k)_{:,j}-1\|\right|
\right).
\]
It is shown in \Cref{thm:cgnce2sided} that the sequence $A_k$
converges to a matrix that is balanced, i.e., one that
has column and row norms all equal to one.
The remark below also discusses that
the $\omega$ values of the iterates are monotonically
decreasing.

\begin{remark}
At each iteration, $\tilde A_k$ (resp. $A_k$) is computed using the $\omega$-optimal right-sided (resp. left-sided)
preconditioner. Hence, the $\omega$ values of the iterates are monotonically decreasing,
i.e., $\omega(A_kA_k^{T})\leq \omega(\tilde A_k\tilde A_k^{T})\leq \omega(A_{k-1}x A_{k-1}^{T})$
for all $k\geq 1$. The output of \Cref{alg:Two-Sided} thus satisfies
$\omega((\hat C^{1/2} A\hat D^{1/2})^{T})(\hat C^{1/2} A \hat D^{1/2}))<\omega(A^{T}A)$.
\end{remark}

\Cref{thm:cgnce2sided} now provides 
a convergence guarantee for \Cref{alg:Two-Sided} 
and shows that the necessary condition 
\Cref{eq:neccondtwosided} for a two-sided $\omega$-optimal
diagonal preconditioner
holds in the limit for \Cref{alg:Two-Sided}.

\begin{theorem}\label{thm:cgnce2sided}
Let $A$ be such that $A\bullet A$ has total support.\footnote{A nonnegative square matrix $B$ is said to have total support if $B\neq 0$ and all its
nonzero elements lie on a positive diagonal. In this case, a diagonal of a matrix is just a collection of elements with one element from each row and one element from each column of $B$. More specifically, it will consist of $b_{i,\sigma(i)}$ for $i=1,2,\ldots n$ for a permutation $\sigma$ and where $b_{ij}$ denotes the $(i,j)$ entry of $B$.} Then the sequence $A_k$ converges
linearly to a matrix $\bar A:=\bar C^{1/2}A \bar D^{1/2}$ that has column and row norms all equal to 1.
That is, 
\[\lim_{k\rightarrow \infty} \Diag\left(\prod_{j=1}^{k}
\sqrt{c_j}\right) A \Diag\left(\prod_{j=1}^{k}
\sqrt{d_j}\right)=\bar C^{1/2}A\bar D^{1/2}\]
with a linear rate of convergence. Hence, it follows from \Cref{thm:twosidedinvert}
that limiting matrices $\bar C$ and $\bar D$ satisfy the 
necessary condition \Cref{eq:neccondtwosided} for a two-sided $\omega$-optimal diagonal preconditioner.
\end{theorem}

\begin{proof}
Let $\tilde P_k=\tilde A_k \bullet \tilde A_k$ and let $P_k =A_k \bullet A_k$.
It follows that $\tilde P_{k}=(A_{k-1}\bullet A_{k-1})\Diag(d_k)=P_{k-1}\Diag(d_k)$
where $(d_k)_{i}=1./\|(A_{k-1})_{:,i}\|^2$. Note $\|(A_{k-1})_{:,i}\|^2$
is exactly the $i$-th column sum of $P_{k-1}$
so $(d_k)_{i}=1./\sum_{l=1}^{n} (P_{k-1})_{l,i}$.
Moreover, also observe that 
$P_{k}=\Diag(c_k)(\tilde A_k \bullet \tilde A_k)=\Diag(c_k)\tilde P_k$.
Note $(c_k)_{i}=1./\sum_{l=1}^{n} (\tilde P_{k})_{i,l}$.
Hence, the sequences of iterates $\tilde P_k$ and $P_k$ can be viewed as
the iterates of Sinkhorn--Knopp Algorithm applied to $A\bullet A$. By
\cite{knight2008sinkhorn}, we know then that the sequence $P_k$ converges linearly to a matrix
$\bar P$ where $\bar P:=\bar C (A \bullet A) \bar D$ is 
a doubly stochastic matrix. 
The sequences of iterates $\tilde A_k$ and $A_k$  
are just the square roots of the iterates of the Sinkhorn--Knopp
algorithm applied to $A\bullet A$. Therefore, it follows from the above
argument that the sequence $A_k$ converges
linearly to a matrix $\bar A:=\bar C^{1/2}A\bar D^{1/2}$ that has column
and row norms all equal to $1$. The last conclusion of the theorem
follows from this observation and the 
necessary condition \Cref{eq:neccondtwosided} in \Cref{thm:twosidedinvert}.

\end{proof}

\subsection{Right-, Left-Sided Block Diagonal}
\label{sect:optblockdiag}

We now show that the above results extend directly to finding
\emph{block diagonal} preconditioners for least
squares solutions of full rank possibly overdetermined linear systems $Ax=b, A
\in \R^{m\times n}, m\geq n$. The preconditioner is extended from
diagonal to block diagonal (block upper-triangular). 
This relates to sparse QR preconditioning and
extends the results for $\omega$-optimal preconditioning with
partial Cholesky structure in~\cite[Theorem 2.7]{JungTorrWolkOmega2024}
and the block diagonal $\omega$-optimal preconditioner 
in~\cite[Prop. 3 part 3]{DoanW:11}.

\subsubsection{Right-Sided  Block Diagonal}
\label{sect:rightsidedblockdiag}
We let the linear transformation $B=$\textdef{$\Blkdiag(\cB)\in \Mn$}
denote the block diagonal matrix with blocks formed from the
set of square matrices $\cB=\{B_i\}_{i=1}^k$
of order $n_i, \sum_{i=1}^k n_i = n$. 
For our application we restrict the blocks to be of upper-triangular
structure denoted $\cR=\{R_i\}_{i=1}^k$, and see that the
best, with respect to the $\omega$ measure, comes
from Q-less QR decompositions of the corresponding blocks of $A$.

First we choose the number of blocks $k$ and the block
sizes $n_i, \sum_{i=1}^k n_i = n$. 
Thus we get the block structure
\[
A = \begin{bmatrix} A_1 & A_2 &\ldots & A_k \end{bmatrix},\quad
A / \Blkdiag(\cR) = 
\begin{bmatrix} A_1/ R_1 & A_2 / R_2&\ldots & A_k / R_k\end{bmatrix},
\]
where we use the \textsc{MATLAB} notation that $/$ denotes matrix 
division \textdef{$A/R=AR^{-1}$}.
Optimally, we want the sizes of the blocks chosen so that
the matrices $A_i^TA_i$ are chordal so that there is no loss of
sparsity. Moreover, we can allow a preliminary permutation of the
columns $A \leftarrow AP$ so that we can increase the 
effect of the preconditioner.

To extend the diagonal preconditioner results we solve:
\[
\min_\cB \{\omega ((AB)^T(AB)) : B = \Blkdiag(\cB) \}.
\]
The basic result we use follows.

\begin{prop}[{\cite[Prop. 3 part 3]{DoanW:11},\cite[Prop.
2.2]{KrukDoanW:10}}]
\label{prop:blckdiaf}
Let 
\[
A = \begin{bmatrix} A_1 & A_2 &\ldots & A_k \end{bmatrix},\quad
A_i \in \R^{m\times n_i}, \forall i,
\]
be a full rank $m\times n$ matrix, $m\geq n$.
Then an optimal block diagonal scaling 
\[
\cB:= \{B_1 , B_2 ,\ldots ,B_k\},\, B_i\in \R^{n_i\times n_i},\qquad
B:= \Blkdiag(\cB),
\]
that minimizes the measure $\omega$, i.e.,
\[
\min  \, \{\omega((A B)^T(A B)) : B = \Blkdiag(\cB) \},
\]
is found by satisfying the factorization
\[
B_iB_i^T = (A_i^TA_i)^{-1}, \quad i=1,\ldots,k.
\]
\end{prop}

\begin{corollary}
\phantomsection
\label{cor:partialqr}
Let $A,\cB,B$ be as in~\Cref{prop:blckdiaf}.
\\(i) If we restrict the diagonal blocks $B_i$ to be diagonal themselves, 
then we get the optimal diagonal preconditioner in~\Cref{prop:omegaoptDAD}.
\\(ii) If we restrict the diagonal blocks $B_i$ to be upper-triangular, 
then we get $B_i = R_i^{-1}, A_i = Q_iR_i$ from the QR decomposition of
$A_i$ (up to sign of rows of $R$).
\end{corollary}

\subsubsection{Left-Sided  Block Diagonal}
\label{sect:leftsidedblockdiag}

We continue with $A$ full column rank but with left-sided
preconditioning. We present a nonlinear equation that
characterizes optimality.
\begin{thm}\label{thm:leftblkgeneral}
Let $A\in\mathbb{R}^{m\times n}$ be full column rank with block
structure given by 
\[
A=\begin{bmatrix} \bar{A}_1 \cr \bar{A}_2 \cr \ldots \cr
\bar{A}_{\ell}\end{bmatrix},
  \bar{A}_j\in\mathbb{R}^{m_j \times n}, 
  \rank(\bar{A}_j) = m_j, \, \forall j.
\]
Then the  $\omega$-optimal left-sided block diagonal
preconditioner
\[
E := \Blkdiag(E_1,E_2,\ldots,E_{\ell}), \; E_j \in
\mathbb{R}^{m_j\times m_j},
\]
that minimizes the measure $\omega$, i.e., 
\[
\min \omega((EA)^TEA),
\]
is characterized by:
\[
\bar{A}_j\bar{A}_j^T =  \frac{1}{n} 
\left(\trace\sum_{t=1}^\ell 
\left(\bar{A}_t^T(E_t^TE_t)\bar{A}_t\right)
\right)
\, \bar{A}_j 
\left(A^T(E^TE)A\right)^{-1}
\bar{A}_j^T, \,\, \forall j=1,\ldots \ell.
\]
\end{thm}

\begin{proof}
To begin, we substitute $K:= E^TE$. That is, we solve 
\[
\min\{\omega(A^TKA): K = \Blkdiag(K_1,K_2,\ldots,K_{\ell}), \; K_j \in\mathbb{R}^{m_j\times m_j}\}.
\]
Notice that $A^TKA =  \sum_{j=1}^\ell \bar{A}_j^T K_j \bar{A}_j$. 
Define the linear transformation $\mathcal{G}(K) =  \sum_{j=1}^\ell \bar{A}_j^T K_j \bar{A}_j$. This yields
\[
\omega_{\mathcal{G}}(K) := \frac{ \trace(A^TKA)/n}{\det(A^TKA)^{1/n}} = \frac{\displaystyle\sum_{j=1}^{\ell} \trace(\bar{A}_j^T K_j \bar{A}_j)/n}{\det(A^TKA)^{1/n}}.
\]
Hence, for convenience and for the remainder of the proof define
\[
f(K):=\sum_{j=1}^{\ell} \trace(\bar{A}_j^T K_j \bar{A}_j)/n \quad \text{and} \quad g(K):= \det(A^TKA)^{1/n}.
\]
The partial derivative of $g$ with respect to the block $K_j$ is given by
\begin{equation*}
\begin{aligned}
\langle \nabla_{K_j} g(K), \Delta K_j \rangle 
& = \langle \nabla \det(\mathcal{G}(K))^{1/n}, \Delta K_j \rangle  \\
&= \langle \frac{1}{n}\det (\mathcal{G}(K))^{1/n-1}\adj(\mathcal{G}(K)), \bar{A}_j^T \Delta K_j \bar{A}_j \rangle \\
&=  \frac{1}{n}  g(K) \langle  \mathcal{G}(K)^{-1}, \bar{A}_j^T  \Delta K_j \bar{A}_j \rangle  \\
&=  \frac{1}{n}  g(K) \langle  \bar{A}_j \mathcal{G}(K)^{-1} \bar{A}_j^T, \Delta K_j  \rangle .
\end{aligned}
\end{equation*}
Therefore, we have
\[
\nabla_{K_j} f(K) = \frac{1}{n} \bar{A}_j\bar{A}_j^T \quad \text{and} \quad \nabla_{K_j} g(K) = \frac{1}{n} g(K) \bar{A}_j (A^TKA)^{-1} \bar{A}_j.
\]
These in turn lead to the partial derivative of $\omega_{\mathcal{G}}$ with respect to $K_j$ given by
\[
\begin{aligned}
\nabla_{K_j} \omega_{\mathcal{G}}(K)  
& = \frac{1}{g(K)^2} (g(K) \nabla_{K_j} f(K) - f(K) \nabla_{K_j} g(K)) \\
& = \frac{1}{ng(K)} (\bar{A}_j\bar{A}_j^T  - f(K) \bar{A}_j (A^TKA)^{-1} \bar{A}_j^T) 
\end{aligned}
\]
Therefore 
\[
\nabla \omega_{\mathcal{G}} (K) = 0    \quad \Longleftrightarrow \quad \bar{A}_j\bar{A}_j^T =  \frac{1}{n} \sum_{t=1}^\ell \trace(\bar{A}_t\bar{A}_t^TK_t)\, \bar{A}_j (A^TKA)^{-1} \bar{A}_j^T, \quad \forall j.
\]
We can then recover the $E_j$ from $K$ using factorizations of the
blocks $K_j=E_j^TE_j$. 
\end{proof}

Finding an explicit solution for $E$ in~\Cref{thm:leftblkgeneral} is an
open question. However, if we
assume further that $A$ is a square matrix, the following
\Cref{cor:leftblksqure} connects the result from \Cref{cor:partialqr} to
this left sided preconditioning.

\begin{cor}\label{cor:leftblksqure}
	Let $A\in\Mn$ be nonsingular. Let the block of rows, $\bar{A}_j\in\mathbb{R}^{n_j \times n}$, be given by $A^T=[\bar{A}_1^T \, \bar{A}_2^T \, \ldots \, \bar{A}_{\ell}^T]$, $\sum_j n_j=n$. Define $\cE:=\{E_1,E_2,\dots,E_k\}$ by $E_i:= \bar R_i^{-T}$, where $\bar A_i\bar A_i^T:= \bar R_i^T\bar R_i$ is the Cholesky decomposition. Then, $\cE$ is the optimal \underline{lower triangular} block diagonal scaling for
	\[
	\min  \, \{\omega((E A)^T(E A)) : E = \Blkdiag(E_1,E_2,\dots,E_k) \}.
	\]
\end{cor}
\begin{proof}
	Since $A$ and $E$ are square matrices, we observe 
	\[
	\omega\big((EA)^TEA\big) = \omega\big(A^TE^TEA\big) = \omega\big(EAA^{T}E^T\big),
	\]
	and thus \Cref{cor:partialqr} implies $E_i= \bar R_i^{-T}$, where $\bar A_i\bar A_i^T:= \bar R_i^T\bar R_i$ as defined above.
\end{proof}

\begin{remark}
	The result in \Cref{cor:leftblksqure} agrees with our
characterization  in \Cref{thm:leftblkgeneral}. Indeed, by defining $E_i:= \bar R_i^{-T}$ with $\bar A_i\bar A_i^T= \bar R_i^T\bar R_i$, we get that $\trace(\bar{A}_t\bar{A}_t^TE_t^TE_t) = \trace(\bar{R}_t^T\bar{R}_t\bar R_t^{-1}\bar R_t^{-T}) = \trace(I_{n_t}) = n_t$. Thus, 
	\[
	 \sum_{t=1}^\ell \trace(\bar{A}_t\bar{A}_t^TE_t^TE_t) =  \sum_{t=1}^\ell n_t = n.
	\]
	Now, define $\cI_j:=\begin{bmatrix}	0 & I_{n_j} & 0	\end{bmatrix}\in\R^{n_j\times n}$, with the identity $I_{n_j}\in\cM^{n_j}$ of order $n_j$, appropriately located such that $\cI_j A = \bar A_j$. Then,
	\[
	\begin{aligned}
		 \frac{1}{n} 
\left(\trace\sum_{t=1}^\ell 
\left(\bar{A}_t^TE_t^TE_t\bar{A}_t\right)
\right)
\bar{A}_j 
\left(A^TE^TEA\right)^{-1}
\bar{A}_j^T &= \bar{A}_j (A^TE^TEA)^{-1} \bar{A}_j^T\\
		&= \cI_j AA^{-1}E^{-1}E^{-T}A^{-T}A^T\cI_j^T\\
		&= \cI_j E^{-1}E^{-T}\cI_j^T\\
		&= E_j^{-1}E_j^{-T} = \bar R_j^T\bar R_j = \bar A_j\bar A_j^T.
	\end{aligned}
	\]
\end{remark}

\section{Computational Experiments}
\phantomsection
\label{sect:numerics}

Our computational experiments are divided into several parts. 
In \Cref{Min Kappa Comparison}, we compare
the computational efficiency of our subgradient algorithms, \Cref{Subgrad V,alg:GDeplustVv}, with the SDP-based approaches proposed in \cite{doi:10.1287/opre.2022.0592}
and \cite{gao2023scalable}
for finding an approximate $\kappa$-optimal preconditioner.
Both our codes and the code in \cite{gao2023scalable} take as input $M\succ0$. 
Our subgradient algorithms find a diagonal $D$ that minimizes $\kappa(D^{1/2}MD^{1/2})$
while \cite{gao2023scalable}
solves an SDP using {\sc
MOSEK} \cite{mosek} to find
a $D$ that is $\kappa$-optimal over the subspace spanned
by a small list of given diagonal preconditioners.
Meanwhile, \cite{doi:10.1287/opre.2022.0592}
obtains a diagonal scaling $E$
that minimizes $\kappa((ME)^{T}(ME))$ by solving a possibly larger SDP.
Hence, when using their code,
we input a Cholesky factor $B$ of $M$, i.e., $M=B^{T}B$, so that their
algorithm finds $E$ that
minimizes $\kappa(EME)$. 
For every experiment,
$\kappa(M)$, $\kappa(D^{1/2}MD^{1/2})$, and $\kappa(EME)$ were each evaluated
by performing a minimum and maximum eigenvalue
computation
using {\sc MATLAB}'s eigs function with $10^{-10}$ precision.

In \Cref{PCG Linear Systems}, we compare PCG using the preconditioner from \Cref{Subgrad V} with PCG using the preconditioner of \cite{doi:10.1287/opre.2022.0592} for solving $Mx=b$, where $M\succ 0$.
In \Cref{Two-Sided Empirics}, we compare 
the efficiency of our \Cref{alg:Two-Sided},
with the two-sided $\kappa$-optimal preconditioner presented in 
\cite{doi:10.1287/opre.2022.0592}. In particular,
we compare the time to compute each preconditioner
as well as the performance of each preconditioner when used
inside LSQR to minimize $\|b-Ax\|$ where $A\in \Mn$
is nonsingular.
Finally, in 
\Cref{sect:optkappimprovomega},
we use the optimality conditions in \Cref{thm:optcondkappa}
in order to construct 
a matrix $M$ that is $\kappa$-optimal with respect to diagonal preconditioning.
We then compare with results
after applying the Jacobi scaling, the $\omega$-optimal scaling.

All experiments in \Cref{Min Kappa Comparison,PCG Linear Systems,Two-Sided Empirics}
are run on a 2023 Macbook Pro with an 8-core CPU and 128 GB of memory
using {\sc MATLAB} 2024a. The 
medium experiments in \Cref{sect:optkappimprovomega} were also run
with this Macbook Pro while the very large instances were run
with a large Linux machine from University of Waterloo with 256 GB of memory.
The latest codes are available at this
\href{https://github.com/asujanani6/Optimal_Preconditioning}
{clickable-link} or with URL 
\url{https://github.com/asujanani6/Optimal_Preconditioning}.

\subsection{Minimizing $\kappa$ Efficiently}
\label{Min Kappa Comparison}
In this subsection, 
\Cref{table:OptKappaSuiteSparse,table:OptKappaRandom4}
compare the computational efficiency 
of \Cref{Subgrad V} with the algorithms in \cite{doi:10.1287/opre.2022.0592} and \cite{gao2023scalable} for minimizing $\kappa$.
The tables also compare these algorithms with our own line-search based subgradient method, namely \Cref{alg:GDeplustVv}, that 
was slightly more computationally efficient than our \Cref{Subgrad V} although \Cref{Subgrad V}
has theoretical convergence guarantees. The precise details of \Cref{alg:GDeplustVv} can be found in \Cref{Efficient_Subgradient_Method}.
In our experiments, \Cref{Subgrad V} uses a tol of $10^{-4}$, sets stopcrit=$2|\kappa(d_{k+1})-\kappa(d_k)|/|\kappa(d_{k+1}))+\kappa(d_k)|$ at 
every iteration, and uses 
a maxiter of $500$ and $\hat \delta=10^{-3}$. 
The default settings for the algorithms in \cite{doi:10.1287/opre.2022.0592} and \cite{gao2023scalable} are used.
\Cref{alg:GDeplustVv} uses a maximum of $80$ iterations and uses the same tolerance $10^{-4}$. 
\begin{table}[t]
\centering
\caption{Comparison of Algorithms for Min $\kappa$ on SuiteSparse}
\resizebox{0.8\columnwidth}{!}
{
\begin{tabular}{|ccc|cccc|cccc|cc|} \hline
\multicolumn{3}{|c||}{Dim \& Density, \& $\kappa(M)$} & \multicolumn{4}{|c|}{\% Reduction in $\kappa$} & \multicolumn{4}{|c|}{CPU Time (seconds)} &\multicolumn{2}{|c|}{CPU Ratios}\cr\cline{1-3}\cline{4-13}
  $n$ & density & $\kappa(M)$ & \cite{doi:10.1287/opre.2022.0592} & \cite{gao2023scalable} & \Cref{Subgrad V} & \Cref{alg:GDeplustVv} & \cite{doi:10.1287/opre.2022.0592} & \cite{gao2023scalable} & \Cref{Subgrad V} & \Cref{alg:GDeplustVv} & \cite{doi:10.1287/opre.2022.0592}/\Cref{Subgrad V} & \cite{doi:10.1287/opre.2022.0592}/\Cref{alg:GDeplustVv} \cr\hline
 1074 & 1.1e-02 & 2.6e+07 & 0.0e+00 & 0.0e+00 & 9.9e+01 & 9.9e+01 & 48.203 & 51.700 & 8.469 & 5.644 &  5.7 &  8.5 \cr\hline
 2003 & 2.1e-02 & 1.1e+10 & 0.0e+00 & 0.0e+00 & 9.8e+01 & 6.8e+01 & 330.887 & 324.334 & 12.968 & 0.120 & 25.5 & 2752.3 \cr\hline
 3600 & 2.1e-03 & 1.8e+07 & 0.0e+00 & 0.0e+00 & 6.0e+01 & 6.0e+01 & 510.244 & 1102.270 & 24.644 & 9.538 & 20.7 & 53.5 \cr\hline
 3134 & 4.6e-03 & 2.6e+12 & -9.7e-04 & 0.0e+00 & 7.5e+01 & 7.0e+01 & 178.360 & 1347.551 & 2.542 & 0.410 & 70.2 & 435.3 \cr\hline
 3562 & 1.3e-02 & 1.9e+11 & 0.0e+00 & 0.0e+00 & 8.6e+01 & 7.8e+01 & 1621.648 & 948.059 & 36.940 & 1.694 & 43.9 & 957.4 \cr\hline
 1922 & 8.2e-03 & 1.7e+08 & 0.0e+00 & 0.0e+00 & 9.6e+01 & 8.8e+01 & 182.146 & 448.809 & 4.524 & 0.405 & 40.3 & 450.0 \cr\hline
 4410 & 1.1e-02 & 9.5e+08 & 0.0e+00 & 8.7e+01 & 9.6e+01 & 8.3e+01 & 3170.182 & 2602.683 & 4.481 & 2.784 & 707.5 & 1138.7 \cr\hline
  588 & 6.2e-02 & 2.8e+04 & 9.9e+01 & 0.0e+00 & 9.0e+01 & 9.4e+01 & 19.195 & 7.152 & 0.463 & 1.379 & 41.5 & 13.9 \cr\hline
  494 & 6.8e-03 & 2.4e+06 & 9.0e+01 & 0.0e+00 & 9.1e+01 & 3.3e+01 & 9.222 & 2.959 & 0.832 & 0.116 & 11.1 & 79.8 \cr\hline
  662 & 5.6e-03 & 7.9e+05 & 9.5e+01 & 0.0e+00 & 7.4e+01 & 4.5e+01 & 16.293 & 5.947 & 0.324 & 0.203 & 50.2 & 80.3 \cr\hline
 1824 & 1.2e-02 & 1.9e+06 & 0.0e+00 & 0.0e+00 & 8.7e+01 & 7.1e+01 & 182.642 & 165.698 & 3.468 & 1.958 & 52.7 & 93.3 \cr\hline
 2146 & 1.6e-02 & 1.7e+03 & 0.0e+00 & 5.3e+01 & 5.5e+01 & 4.9e+01 & 428.711 & 137.240 & 2.604 & 16.508 & 164.6 & 26.0 \cr\hline
 2910 & 2.1e-02 & 6.0e+06 & 0.0e+00 & 0.0e+00 & 9.1e+01 & 7.5e+01 & 619.758 & 613.440 & 8.306 & 3.731 & 74.6 & 166.1 \cr\hline
  237 & 1.8e-02 & 2.0e+07 & 2.1e+01 & 5.5e+00 & 7.9e+01 & 6.5e+01 & 2.178 & 0.620 & 0.276 & 0.056 &  7.9 & 38.8 \cr\hline
  957 & 4.5e-03 & 5.1e+09 & 2.8e+01 & 3.5e+01 & 6.0e+01 & 6.5e+01 & 10.107 & 8.676 & 6.419 & 0.145 &  1.6 & 69.7 \cr\hline
  100 & 5.9e-02 & 1.6e+03 & 3.8e+01 & 3.1e+01 & 3.5e+01 & 3.5e+01 & 0.800 & 0.297 & 0.036 & 0.970 & 22.2 &  0.8 \cr\hline
  468 & 2.4e-02 & 1.1e+04 & 8.3e+01 & 7.9e+01 & 7.7e+01 & 7.1e+01 & 15.428 & 2.957 & 1.081 & 1.298 & 14.3 & 11.9 \cr\hline
  729 & 8.7e-03 & 2.4e+09 & 9.2e+01 & 8.9e+01 & 8.6e+01 & 8.4e+01 & 39.553 & 6.439 & 0.987 & 1.108 & 40.1 & 35.7 \cr\hline
\end{tabular}

}
\label{table:OptKappaSuiteSparse}
\end{table}

\begin{table}[t]
\centering
\caption{Comparison of Algorithms for Min $\kappa$ on Random}
\resizebox{0.8\columnwidth}{!}{%
\begin{tabular}{|ccc|cccc|cccc|cc|} \hline
\multicolumn{3}{|c||}{Dim \& Density, \& $\kappa(A)$} & \multicolumn{4}{|c|}{\% Reduction in $\kappa$} & \multicolumn{4}{|c|}{CPU Time (seconds)} &\multicolumn{2}{|c|}{CPU Ratios}\cr\cline{1-3}\cline{4-13}
  $n$ & density & $\kappa(M)$ & \cite{doi:10.1287/opre.2022.0592} & \cite{gao2023scalable} & \Cref{Subgrad V} & \Cref{alg:GDeplustVv} & \cite{doi:10.1287/opre.2022.0592} & \cite{gao2023scalable} & \Cref{Subgrad V} & \Cref{alg:GDeplustVv} & \cite{doi:10.1287/opre.2022.0592}/\Cref{Subgrad V} & \cite{doi:10.1287/opre.2022.0592}/\Cref{alg:GDeplustVv}\cr\hline
 2000 & 1.0e-03 & 2.0e+06 & -4.8e+01 & 4.8e+01 & 5.3e+01 & 6.6e+01 & 14.498 & 76.446 & 4.470 & 1.154 &  3.2 & 12.6 \cr\hline
 2500 & 8.1e-04 & 2.5e+06 & -5.0e+01 & 2.9e+01 & 5.2e+01 & 5.2e+01 & 21.500 & 229.744 & 9.068 & 1.233 &  2.4 & 17.4 \cr\hline
 3000 & 6.7e-04 & 3.0e+06 & -1.8e+01 & 0.0e+00 & 1.9e+01 & 1.3e+01 & 32.757 & 358.866 & 5.011 & 0.476 &  6.5 & 68.8 \cr\hline
 3500 & 5.8e-04 & 3.5e+06 & -1.9e+01 & 2.5e+01 & 4.3e+01 & 4.9e+01 & 43.032 & 565.527 & 8.757 & 1.961 &  4.9 & 21.9 \cr\hline
 4000 & 5.0e-04 & 4.0e+06 & -4.4e+01 & 0.0e+00 & 2.8e+01 & 6.3e+00 & 56.246 & 631.303 & 11.281 & 0.519 &  5.0 & 108.4 \cr\hline
 4500 & 4.5e-04 & 4.5e+06 & -3.0e+01 & 0.0e+00 & 3.3e+01 & 4.2e+01 & 69.000 & 888.172 & 3.774 & 2.202 & 18.3 & 31.3 \cr\hline
 5000 & 4.0e-04 & 5.0e+06 & -4.4e+01 & 0.0e+00 & 3.9e+01 & 3.9e+01 & 85.069 & 2009.496 & 13.994 & 2.341 &  6.1 & 36.3 \cr\hline
 5500 & 3.7e-04 & 5.5e+06 & -3.5e+01 & 1.4e+01 & 2.8e+01 & 2.8e+01 & 104.179 & 2512.849 & 2.763 & 2.614 & 37.7 & 39.8 \cr\hline
 6000 & 3.3e-04 & 6.0e+06 & -4.3e+01 & 0.0e+00 & 3.0e+01 & 3.4e+01 & 127.060 & 3680.562 & 5.383 & 2.748 & 23.6 & 46.2 \cr\hline
 6500 & 3.1e-04 & 6.5e+06 & -2.9e+01 & 1.9e+01 & 2.7e+01 & 3.2e+01 & 146.476 & 3755.564 & 4.694 & 3.124 & 31.2 & 46.9 \cr\hline
 7000 & 2.9e-04 & 7.0e+06 & -2.9e+01 & 2.4e+01 & 3.3e+01 & 3.0e+01 & 168.726 & 3838.763 & 11.441 & 3.243 & 14.7 & 52.0 \cr\hline
 7500 & 2.7e-04 & 7.5e+06 & -4.1e+01 & 2.5e+01 & 3.2e+01 & 2.8e+01 & 193.993 & 3879.417 & 12.929 & 3.649 & 15.0 & 53.2 \cr\hline
 8000 & 2.5e-04 & 8.0e+06 & -1.9e+01 & 3.0e+01 & 3.2e+01 & 2.7e+01 & 215.347 & 3790.944 & 16.234 & 3.645 & 13.3 & 59.1 \cr\hline
 8500 & 2.4e-04 & 8.5e+06 & -2.8e+01 & 3.6e+01 & 2.5e+01 & 2.6e+01 & 246.213 & 3929.044 & 8.086 & 4.716 & 30.5 & 52.2 \cr\hline
 9000 & 2.2e-04 & 9.0e+06 & -1.5e+01 & -9.9e+00 & 9.1e+00 & 3.2e+00 & 271.863 & 3903.914 & 7.354 & 0.931 & 37.0 & 292.1 \cr\hline
 9500 & 2.1e-04 & 9.5e+06 & -4.2e+01 & 3.6e+01 & 2.6e+01 & 2.3e+01 & 323.748 & 4147.347 & 13.329 & 5.681 & 24.3 & 57.0 \cr\hline
10000 & 2.0e-04 & 1.0e+07 & -3.2e+01 & 1.2e+01 & 2.2e+01 & 1.8e+01 & 341.504 & 4304.042 & 8.240 & 5.232 & 41.4 & 65.3 \cr\hline
10500 & 1.9e-04 & 1.0e+07 & -1.2e+01 & 0.0e+00 & 2.7e+01 & 2.1e+01 & 384.690 & 4302.243 & 20.255 & 6.627 & 19.0 & 58.0 \cr\hline
11000 & 1.8e-04 & 1.1e+07 & -4.1e+01 & 3.0e+00 & 2.2e+01 & 1.9e+01 & 463.158 & 4273.218 & 11.945 & 6.085 & 38.8 & 76.1 \cr\hline
11500 & 1.7e-04 & 1.1e+07 & -3.4e+01 & 0.0e+00 & 2.3e+01 & 2.0e+01 & 487.216 & 4000.550 & 9.915 & 6.705 & 49.1 & 72.7 \cr\hline
12000 & 1.7e-04 & 1.2e+07 & -3.0e+01 & 0.0e+00 & 2.2e+01 & 1.9e+01 & 511.686 & 4581.828 & 10.963 & 6.709 & 46.7 & 76.3 \cr\hline
12500 & 1.6e-04 & 1.3e+07 & -3.1e+01 & 0.0e+00 & 4.0e+00 & 3.2e+00 & 546.856 & 4059.532 & 3.496 & 2.080 & 156.4 & 263.0 \cr\hline
13000 & 1.5e-04 & 1.3e+07 & -2.9e+01 & 0.0e+00 & 2.4e+01 & 1.8e+01 & 593.252 & 4586.045 & 40.099 & 7.383 & 14.8 & 80.4 \cr\hline
13500 & 1.5e-04 & 1.4e+07 & -3.0e+01 & 0.0e+00 & 2.2e+01 & 1.8e+01 & 665.554 & 5145.811 & 15.941 & 8.300 & 41.8 & 80.2 \cr\hline
14000 & 1.4e-04 & 1.4e+07 & -3.0e+01 & 0.0e+00 & 2.2e+01 & 1.7e+01 & 719.336 & 4122.784 & 14.430 & 7.864 & 49.9 & 91.5 \cr\hline
14500 & 1.4e-04 & 1.4e+07 & -3.4e+01 & 0.0e+00 & 2.1e+01 & 1.2e+01 & 901.989 & 4606.777 & 24.572 & 6.888 & 36.7 & 130.9 \cr\hline
15000 & 1.3e-04 & 1.5e+07 & -4.1e+01 & 0.0e+00 & 1.6e+01 & 1.1e+01 & 920.641 & 5115.021 & 14.827 & 6.354 & 62.1 & 144.9 \cr\hline
\end{tabular}

}
\label{table:OptKappaRandom4}
\end{table}

We now give several remarks about the results presented in each table.
\Cref{table:OptKappaSuiteSparse} compares the four algorithms on eighteen matrices taken from the
SuiteSparse Matrix Collection \cite{davis2011university}. The list
of the SuiteSparse matrices considered can be found in \Cref{Appendix SuiteSparse}.
Columns four to seven
of \Cref{table:OptKappaSuiteSparse} list the percent reduction in $\kappa$ that each method achieved.
The percentage reduction is computed as 
$100*(\kappa(M)-\kappa(\hat M))/(\kappa(M))$,
where $\hat M$ is $D^{1/2}MD^{1/2}$ for our codes and \cite{gao2023scalable} or $EME$ for \cite{doi:10.1287/opre.2022.0592}.
As seen from these columns, both \Cref{Subgrad V,alg:GDeplustVv}
reduced $\kappa$ much more significantly than \cite{doi:10.1287/opre.2022.0592} and \cite{gao2023scalable}. \Cref{Subgrad V} reduced $\kappa$ on average by $79.7\%$ while \Cref{alg:GDeplustVv} reduced $\kappa$ by $68.5\%$. 
On the other hand, \cite{doi:10.1287/opre.2022.0592} reduced $\kappa$ on average by $30.3\%$ while \cite{gao2023scalable}
reduced $\kappa$ by $21.08\%$. Columns eight to eleven of \Cref{table:OptKappaSuiteSparse} show that \Cref{Subgrad V,alg:GDeplustVv} 
were also much faster in reducing $\kappa$ than \cite{doi:10.1287/opre.2022.0592} and \cite{gao2023scalable}.
On average,
\Cref{Subgrad V} was $77.5$ (resp. $85.80$) times faster than \cite{doi:10.1287/opre.2022.0592} (resp. \cite{gao2023scalable}) while \Cref{alg:GDeplustVv}
was $356.2$ (resp. $506.31$) times faster than \cite{doi:10.1287/opre.2022.0592} (resp. \cite{gao2023scalable}).

Similar results are presented in \Cref{table:OptKappaRandom4}, which considers 
matrices that were randomly generated to have a specified 
reciprocal condition number using {\sc MATLAB}'s sprandsym function. 
For these instances, we impose a time limit for the algorithm proposed in \cite{gao2023scalable}, as these problems 
were more computationally demanding for their code. We set the time
limit to 3600 seconds, although in some cases, their
 code terminates slightly after this limit.

On every instance considered in \Cref{table:OptKappaRandom4}, \Cref{Subgrad V} 
and \Cref{alg:GDeplustVv}
reduced $\kappa$ more than \cite{doi:10.1287/opre.2022.0592} and \cite{gao2023scalable} and were also much more
efficient than these two algorithms. \Cref{Subgrad V} and \Cref{alg:GDeplustVv}
reduced $\kappa$ on every instance while \cite{doi:10.1287/opre.2022.0592} (resp. \cite{gao2023scalable})
reduced $\kappa$ on 0 (resp. 12) of the 27 instances considered. Moreover, \Cref{Subgrad V} (resp. \Cref{alg:GDeplustVv})
was also over $15$ times faster than \cite{doi:10.1287/opre.2022.0592}
and \cite{gao2023scalable}
on 18 (resp. 26) of the 27 instances considered.


\Cref{table:LargeSuite,table:LargeRandom} illustrate the scalability of our 
\Cref{Subgrad V,alg:GDeplustVv} in efficiently minimizing $\kappa$ on
large test instances. We do not compare against \cite{doi:10.1287/opre.2022.0592} and \cite{gao2023scalable}
since both codes were not able to handle such large instances in a reasonable amount of time.
\Cref{table:LargeSuite} considers matrices that were taken from the SuiteSparse Matrix Collection
while \Cref{table:LargeRandom} considers matrices that were randomly generated using sprandsym to have very large $\kappa$ values.
The results in \Cref{table:LargeSuite} show that on average, \Cref{Subgrad V} 
reduced $\kappa$ by $45.7$\%
while \Cref{alg:GDeplustVv} reduced $\kappa$ by $41.9$\%. Both algorithms also
frequently took less than one minute to minimize $\kappa$.
\Cref{table:LargeRandom} shows that, across eleven difficult instances with $\kappa(M) > 10^{11}$, \Cref{Subgrad V} reduced $\kappa$ by an average of $11.6\%$, 
while \Cref{alg:GDeplustVv} achieved a $4.4\% $ reduction.

\begin{table}[t]
\tiny
\centering
\caption{Comparison of Algorithms for Min $\kappa$ on Large SuiteSparse}
\resizebox{0.6\columnwidth}{!}
{
\begin{tabular}{|ccc|cc|cc|} \hline
\multicolumn{3}{|c||}{Dim \& Density, \& $\kappa(M)$} & \multicolumn{2}{|c|}{\% Reduction in $\kappa$} & \multicolumn{2}{|c|}{CPU Time}\cr\cline{1-3}\cline{3-7}
  $n$ & density & $\kappa(M)$ & \Cref{Subgrad V} & \Cref{alg:GDeplustVv} & \Cref{Subgrad V} & \Cref{alg:GDeplustVv} \cr\hline
14822 & 3.3e-03 & 2.0e+06 & 3.9e+01 & 3.0e+01 & 7.850 & 283.641 \cr\hline
15439 & 1.1e-03 & 4.4e+12 & 8.0e+01 & 7.2e+01 & 8.734 & 2.185 \cr\hline
15439 & 6.5e-05 & 6.1e+09 & 4.2e+01 & 5.4e+01 & 2.283 & 2.725 \cr\hline
17361 & 1.1e-03 & 2.5e+06 & 1.3e+01 & 1.2e+01 & 1.097 & 9.086 \cr\hline
17361 & 3.4e-03 & 1.1e+09 & 5.6e+01 & 3.0e+01 & 19.675 & 9.306 \cr\hline
23052 & 2.2e-03 & 7.4e+11 & 9.0e+01 & 7.2e+01 & 503.668 & 6.933 \cr\hline
30401 & 5.1e-04 & 5.8e+03 & 9.1e+01 & 9.3e+01 & 204.718 & 14.951 \cr\hline
36441 & 4.3e-04 & 2.6e+03 & 8.5e+01 & 9.2e+01 & 459.869 & 59.632 \cr\hline
40806 & 1.2e-04 & 8.1e+01 & 4.5e-02 & 1.1e-01 & 29.901 & 16.301 \cr\hline
48962 & 2.1e-04 & 1.6e+05 & 6.2e+00 & 4.1e+00 & 22.109 & 43.123 \cr\hline
150102 & 3.2e-05 & 1.3e+07 & 4.6e-03 & 1.5e+00 & 53.450 & 864.384 \cr\hline
\end{tabular}

}
\label{table:LargeSuite}
\end{table}

\begin{table}[t]
\tiny
\centering
\caption{Comparison of Algorithms for Min $\kappa$ on Large Random}
\label{table:LargeRandom}
\resizebox{0.6\columnwidth}{!}
{
\begin{tabular}{|ccc|cc|cc|} \hline
\multicolumn{3}{|c||}{Dim \& Density, \& $\kappa(M)$} & \multicolumn{2}{|c|}{\% Reduction in $\kappa$} & \multicolumn{2}{|c|}{CPU Time}\cr\cline{1-3}\cline{3-7}
  $n$ & density & $\kappa(M)$ & \Cref{Subgrad V} & \Cref{alg:GDeplustVv} & \Cref{Subgrad V} & \Cref{alg:GDeplustVv}  \cr\hline
50000 & 3.8e-05 & 1.0e+11 & 1.7e+01 & 7.7e+00 & 244.577 & 74.798 \cr\hline
60000 & 3.2e-05 & 1.2e+11 & 1.5e+01 & 6.5e+00 & 400.467 & 103.404 \cr\hline
70000 & 2.7e-05 & 1.4e+11 & 1.4e+01 & 5.6e+00 & 492.698 & 123.284 \cr\hline
80000 & 2.4e-05 & 1.6e+11 & 1.1e+01 & 5.0e+00 & 436.111 & 198.894 \cr\hline
90000 & 2.1e-05 & 1.8e+11 & 1.0e+01 & 4.3e+00 & 550.245 & 223.299 \cr\hline
100000 & 1.9e-05 & 2.0e+11 & 1.2e+01 & 4.0e+00 & 986.187 & 250.367 \cr\hline
110000 & 1.7e-05 & 2.2e+11 & 1.1e+01 & 3.7e+00 & 1162.321 & 293.284 \cr\hline
120000 & 1.6e-05 & 2.4e+11 & 1.1e+01 & 3.4e+00 & 1172.119 & 332.887 \cr\hline
130000 & 1.5e-05 & 2.6e+11 & 8.5e+00 & 3.1e+00 & 1274.404 & 386.352 \cr\hline
140000 & 1.4e-05 & 2.8e+11 & 9.1e+00 & 2.9e+00 & 1500.467 & 480.042 \cr\hline
150000 & 1.3e-05 & 3.0e+11 & 8.7e+00 & 2.7e+00 & 1677.360 & 498.512 \cr\hline
\end{tabular}

}
\end{table}

\subsection{PCG Comparison for Solving Linear Systems}
\label{PCG Linear Systems}
In this subsection, we assume $M\succ 0$
and compare the performance of PCG for solving $Mx=b$ using the preconditioner from \Cref{alg:GDeplustVv} with the one obtained by \cite{doi:10.1287/opre.2022.0592}.
PCG is run with a tolerance of $1e-4$ and a maximum iteration count of $5e6$.
The results are presented in \Cref{table:PCG1}. 
All matrices $M$ considered in both tables 
are randomly generated, using {\sc MATLAB}'s sprandsym 
function, with varying dimensions, densities, and 
$\kappa$ values.

We see in \Cref{table:PCG1} that 
on average, across the 47 instances in \Cref{table:PCG1}, PCG takes approximately $442,147$
iterations using the preconditioner obtained from \Cref{alg:GDeplustVv} while it
takes $549,002$ iterations
using the preconditioner from \cite{doi:10.1287/opre.2022.0592}.
Moreover, the average total CPU time (including time for both preconditioning and PCG) for 
\Cref{alg:GDeplustVv} is $34.0$ seconds, compared to $334.1$ seconds for the algorithm in 
\cite{doi:10.1287/opre.2022.0592}. Hence, in terms of total CPU time, \Cref{alg:GDeplustVv} is on average 
approximately ten times faster than the method in \cite{doi:10.1287/opre.2022.0592}.

\begin{table}[H]
\tiny
\centering
\caption{PCG Comparison Using Preconditioners Found by Algorithm in \cite{doi:10.1287/opre.2022.0592} and \Cref{alg:GDeplustVv}}
\label{table:PCG1}
\resizebox{0.85\columnwidth}{!}
{
\begin{tabular}{|ccc|cc|cc|cc|cc|} \hline
\multicolumn{3}{|c||}{Dim, Density, \& $\kappa(A)$} & \multicolumn{2}{|c|}{\% Reduction in $\kappa$} & \multicolumn{2}{|c|}{PCG Iterations} &\multicolumn{2}{|c|}{PCG CPU Time} &\multicolumn{2}{|c|}{Total CPU Time}\cr\cline{1-3}\cline{4-11}
  $n$ & density & $\kappa(M)$ & \cite{doi:10.1287/opre.2022.0592} & \Cref{alg:GDeplustVv} & \cite{doi:10.1287/opre.2022.0592} & \Cref{alg:GDeplustVv} & \cite{doi:10.1287/opre.2022.0592} & \Cref{alg:GDeplustVv} & \cite{doi:10.1287/opre.2022.0592} & \Cref{alg:GDeplustVv} \cr\hline
 1000 & 3.3e-03 & 1.0e+09 & -2.6e+01 & 3.6e+01 & 115264 & 99023 & 1.14 & 0.99 & 5.33 & 1.29 \cr\hline
 1300 & 2.5e-03 & 1.3e+09 & -2.6e+01 & 6.0e+01 & 153760 & 101197 & 1.89 & 1.23 & 8.41 & 1.49 \cr\hline
 1600 & 2.0e-03 & 1.6e+09 & -4.3e+01 & -1.0e-13 & 180929 & 179469 & 2.67 & 2.67 & 12.17 & 2.71 \cr\hline
 1900 & 1.7e-03 & 1.9e+09 & -3.5e+01 & 6.9e+01 & 202276 & 121629 & 3.43 & 2.07 & 16.17 & 2.87 \cr\hline
 2200 & 1.4e-03 & 2.2e+09 & -4.9e+01 & 7.1e+01 & 245521 & 126567 & 4.82 & 2.50 & 21.54 & 3.39 \cr\hline
 2500 & 1.2e-03 & 2.5e+09 & -2.3e+01 & 2.7e+01 & 252757 & 221181 & 5.43 & 4.69 & 29.06 & 5.07 \cr\hline
 2800 & 1.1e-03 & 2.8e+09 & -4.2e+01 & 1.9e+01 & 289351 & 248376 & 6.69 & 5.74 & 33.60 & 6.12 \cr\hline
 3100 & 9.9e-04 & 3.1e+09 & -2.9e+01 & 6.6e+01 & 306195 & 174755 & 8.02 & 4.56 & 45.28 & 6.01 \cr\hline
 3400 & 9.0e-04 & 3.4e+09 & -3.3e+01 & 5.4e+01 & 325838 & 215488 & 9.07 & 6.05 & 54.35 & 7.46 \cr\hline
 3700 & 8.3e-04 & 3.7e+09 & -2.5e+01 & 4.4e+01 & 330583 & 256787 & 9.89 & 7.67 & 63.61 & 9.21 \cr\hline
 4000 & 7.6e-04 & 4.0e+09 & -4.4e+01 & 5.7e+01 & 366269 & 234646 & 11.60 & 7.40 & 65.77 & 9.21 \cr\hline
 4300 & 7.1e-04 & 4.3e+09 & -2.9e+01 & 5.4e+01 & 378370 & 252209 & 13.49 & 9.00 & 88.27 & 10.97 \cr\hline
 4600 & 6.6e-04 & 4.6e+09 & -4.2e+01 & 4.1e-13 & 408179 & 384608 & 15.52 & 14.65 & 89.55 & 14.74 \cr\hline
 4900 & 6.2e-04 & 4.9e+09 & -1.1e+01 & 8.7e+00 & 399371 & 382842 & 15.87 & 15.31 & 108.88 & 15.75 \cr\hline
 5200 & 5.8e-04 & 5.2e+09 & -3.7e+01 & -3.7e-14 & 458497 & 418282 & 19.04 & 17.32 & 115.01 & 17.43 \cr\hline
 5500 & 5.5e-04 & 5.5e+09 & -3.3e+01 & 3.9e+01 & 460157 & 340511 & 19.97 & 14.79 & 128.37 & 16.99 \cr\hline
 5800 & 5.2e-04 & 5.8e+09 & -3.2e+01 & 4.6e+01 & 469674 & 336957 & 21.25 & 15.32 & 143.80 & 17.73 \cr\hline
 6100 & 4.9e-04 & 6.1e+09 & -5.0e+01 & 4.3e+01 & 506736 & 353612 & 23.89 & 16.68 & 157.42 & 19.34 \cr\hline
 6400 & 4.7e-04 & 6.4e+09 & -2.2e+01 & 4.3e+01 & 492739 & 361695 & 23.98 & 17.59 & 166.18 & 20.56 \cr\hline
 6700 & 4.4e-04 & 6.7e+09 & -4.0e+01 & 4.1e+01 & 526574 & 378065 & 25.65 & 18.54 & 189.81 & 21.40 \cr\hline
 7000 & 4.2e-04 & 7.0e+09 & -2.9e+01 & 3.9e+01 & 533440 & 394622 & 28.16 & 20.13 & 202.50 & 22.93 \cr\hline
 7300 & 4.1e-04 & 7.3e+09 & -5.3e+01 & 2.7e+00 & 572826 & 511612 & 32.32 & 27.95 & 234.05 & 28.40 \cr\hline
 7600 & 3.9e-04 & 7.6e+09 & -2.9e+01 & -3.8e-14 & 555453 & 528584 & 31.27 & 29.77 & 232.71 & 29.89 \cr\hline
 7900 & 3.7e-04 & 7.9e+09 & -2.2e+01 & 3.5e+01 & 569653 & 434589 & 33.24 & 25.44 & 274.74 & 29.14 \cr\hline
 8200 & 3.6e-04 & 8.2e+09 & -6.6e+01 & 3.5e+01 & 614529 & 448278 & 37.09 & 26.97 & 323.07 & 30.42 \cr\hline
 8500 & 3.5e-04 & 8.5e+09 & -2.7e+01 & 3.4e+01 & 573590 & 458996 & 35.49 & 28.28 & 292.58 & 32.18 \cr\hline
 8800 & 3.3e-04 & 8.8e+09 & -3.6e+01 & 1.6e+01 & 625430 & 532398 & 40.04 & 34.12 & 322.79 & 35.85 \cr\hline
 9100 & 3.2e-04 & 9.1e+09 & -1.8e+01 & 1.5e+01 & 597815 & 541439 & 39.21 & 35.60 & 324.73 & 37.50 \cr\hline
 9400 & 3.1e-04 & 9.4e+09 & -2.0e+01 & 1.1e+01 & 635767 & 565196 & 42.77 & 37.92 & 374.03 & 39.77 \cr\hline
 9700 & 3.0e-04 & 9.7e+09 & -2.9e+01 & 3.1e+01 & 639683 & 508698 & 44.28 & 35.17 & 391.15 & 40.05 \cr\hline
10000 & 2.9e-04 & 1.0e+10 & -4.5e+01 & 3.1e+01 & 687294 & 521357 & 48.91 & 37.36 & 454.89 & 41.62 \cr\hline
10300 & 2.8e-04 & 1.0e+10 & -2.5e+01 & 2.9e+01 & 667536 & 538749 & 49.08 & 39.58 & 506.66 & 44.78 \cr\hline
10600 & 2.8e-04 & 1.1e+10 & -4.4e+01 & 2.9e+01 & 725311 & 548238 & 54.63 & 41.32 & 510.40 & 46.14 \cr\hline
10900 & 2.7e-04 & 1.1e+10 & -3.9e+01 & 2.6e+01 & 725604 & 566231 & 55.80 & 43.64 & 491.97 & 49.26 \cr\hline
11200 & 2.6e-04 & 1.1e+10 & -2.4e+01 & 2.8e+01 & 693639 & 569480 & 55.03 & 44.93 & 498.31 & 49.79 \cr\hline
11500 & 2.5e-04 & 1.1e+10 & -8.9e+00 & 2.4e+01 & 673910 & 597322 & 54.77 & 48.20 & 521.32 & 53.85 \cr\hline
11800 & 2.5e-04 & 1.2e+10 & -3.9e+01 & 1.7e+01 & 757749 & 627850 & 62.92 & 52.02 & 581.98 & 55.67 \cr\hline
12100 & 2.4e-04 & 1.2e+10 & -2.9e+01 & 2.6e+01 & 748235 & 601448 & 63.64 & 51.45 & 640.96 & 57.59 \cr\hline
12400 & 2.3e-04 & 1.2e+10 & -4.3e+01 & 2.5e+01 & 777435 & 614047 & 68.12 & 54.42 & 653.76 & 60.37 \cr\hline
12700 & 2.3e-04 & 1.3e+10 & -3.3e+01 & 2.5e+01 & 775871 & 620193 & 70.07 & 56.35 & 686.85 & 63.40 \cr\hline
13000 & 2.2e-04 & 1.3e+10 & -3.2e+01 & 1.1e+01 & 771101 & 681834 & 73.02 & 65.22 & 703.04 & 68.91 \cr\hline
13300 & 2.2e-04 & 1.3e+10 & -2.7e+01 & 2.4e+01 & 766793 & 648685 & 75.93 & 64.69 & 727.64 & 71.98 \cr\hline
13600 & 2.1e-04 & 1.4e+10 & -3.5e+01 & 1.0e+01 & 814120 & 708483 & 82.37 & 72.15 & 752.62 & 75.16 \cr\hline
13900 & 2.1e-04 & 1.4e+10 & -5.5e+01 & 2.3e+01 & 863835 & 662119 & 89.35 & 68.57 & 824.58 & 76.07 \cr\hline
14200 & 2.0e-04 & 1.4e+10 & -4.4e+01 & 2.3e+01 & 858674 & 673736 & 90.12 & 71.18 & 887.64 & 78.25 \cr\hline
14500 & 2.0e-04 & 1.5e+10 & -4.3e+01 & 9.4e+00 & 872710 & 741010 & 94.29 & 80.77 & 869.02 & 84.41 \cr\hline
14800 & 2.0e-04 & 1.5e+10 & -3.0e+01 & 8.5e+00 & 836054 & 747797 & 91.59 & 82.12 & 874.94 & 84.96 \cr\hline
\end{tabular}

}
\end{table}

\subsection{LSQR Comparison: \Cref{alg:Two-Sided} vs. Two-Sided $\kappa$-optimal Scaling in \cite{doi:10.1287/opre.2022.0592}}\label{Two-Sided Empirics}
In \Cref{Two-Sided Empirics}, we compare 
the efficiency of our two-sided \Cref{alg:Two-Sided} which reduces $\omega$ at every iteration,
with the two-sided $\kappa$-optimal preconditioner presented in 
\cite{doi:10.1287/opre.2022.0592}. We consider matrices $A\in \Mn$
that are nonsingular and compare the performance of both preconditioners for minimizing $\|b-Ax\|$ with LSQR. The results presenting the comparison are presented in 
\Cref{smallSuiteTwoSided,smallRandomTwoSided}. \Cref{smallSuiteTwoSided}
considers matrices from the SuiteSparse Matrix Collection 
while \Cref{smallRandomTwoSided} considers matrices using {\sc MATLAB}'s sprandn
function. Both tables consider matrices where $n$ does not exceed $300$
since the two-sided $\kappa$-optimal algorithm of \cite{doi:10.1287/opre.2022.0592} could not handle
larger matrices.  In both tables, LSQR is run 
with a tolerance of $10^{-8}$ and a maximum iteration count of $5000$.

\Cref{TwoSidedLarge} compares the performance of LSQR with no preconditioning
with its performance using the preconditioner from \Cref{alg:Two-Sided} on large
matrices from the SuiteSparse Matrix Collection with $n\geq 2000$. The algorithm in \cite{doi:10.1287/opre.2022.0592}
is not considered in \Cref{TwoSidedLarge} since 
it could not handle these large problems. Since these problems are
more difficult, LSQR is run 
with a tolerance of $10^{-6}$ and a maximum iteration count of $100000$. In all tables
$S:=(\hat C^{1/2} A\hat D^{1/2})^{T}(\hat C^{1/2} A \hat D^{1/2})$ and $T:=(D_1^{1/2} AD_2^{-1/2})^{T}(D_1^{1/2} A D_2^{-1/2})$
where $\hat C^{1/2}$ and $\hat D^{1/2}$ (resp. $D_1^{1/2}$ and $D_2^{-1/2}$)
are the preconditioners found by \Cref{alg:Two-Sided} (resp. \cite{doi:10.1287/opre.2022.0592}). 
Also, in all tables, the total CPU time is computed as the sum of the CPU time for preconditioning
and the CPU time of LSQR.
Finally, the list of SuiteSprase matrices considered in \Cref{smallSuiteTwoSided,TwoSidedLarge}
can be found in \Cref{Appendix SuiteSparse}.

\begin{table}[t]
\centering
\caption{LSQR Comparison on Small Suitesparse Matrices: \Cref{alg:Two-Sided} vs \cite{doi:10.1287/opre.2022.0592}}
\resizebox{0.7\columnwidth}{!}
{
\begin{tabular}{|ccc|cc|cc|cc|cc|} \hline
\multicolumn{3}{|c||}{Dim \& Density, \& $\kappa(A)$} & \multicolumn{2}{|c|}{Ratio of Condition Numbers} & \multicolumn{2}{|c|}{CPU Time for Prec} & \multicolumn{2}{|c|}{LSQR Iterations} & \multicolumn{2}{|c|}{Total CPU Time}\cr\cline{1-3}\cline{4-11}
  $n$ & density & $\kappa(A)$ & $\kappa(S)/\kappa(T)$ & $\omega(S)/\omega(T)$ & \Cref{alg:Two-Sided} & \cite{doi:10.1287/opre.2022.0592} & \Cref{alg:Two-Sided} & \cite{doi:10.1287/opre.2022.0592} & \Cref{alg:Two-Sided} & \cite{doi:10.1287/opre.2022.0592} \cr\hline
  300 & 6.6e-03 & 5.2e+03 & 1.3e+00 & 1.0e+00 & 0.006 & 23.358 &    5 &    4 & 0.015 & 23.36 \cr\hline
  130 & 6.1e-02 & 6.1e+10 & 1.1e-01 & 8.2e-01 & 0.002 & 27.943 &    9 &   35 & 0.005 & 27.95 \cr\hline
  200 & 2.0e-02 & 2.4e+03 & 1.0e+00 & 1.0e+00 & 0.000 & 19.308 &  750 &  756 & 0.022 & 19.32 \cr\hline
  104 & 9.2e-02 & 5.5e+03 & 1.6e+00 & 9.7e-01 & 0.001 & 10.056 &  301 &  288 & 0.004 & 10.06 \cr\hline
  216 & 9.3e-02 & 3.3e+04 & 8.9e-01 & 4.6e-01 & 0.002 & 309.028 &   56 &  175 & 0.004 & 309.03 \cr\hline
  115 & 9.3e-02 & 3.7e+02 & 1.5e+00 & 6.7e-01 & 0.001 & 14.324 &  267 &  363 & 0.004 & 14.33 \cr\hline
  185 & 2.8e-02 & 1.8e+05 & 5.7e+00 & 7.5e-01 & 0.003 & 40.382 &  242 &  328 & 0.006 & 40.39 \cr\hline
  216 & 1.7e-02 & 1.0e+02 & 1.3e+00 & 9.0e-01 & 0.001 & 17.672 &  183 &  215 & 0.004 & 17.67 \cr\hline
  207 & 1.3e-02 & 1.4e+08 & 3.9e-01 & 3.4e-01 & 0.003 & 19.862 &  122 &  709 & 0.006 & 19.87 \cr\hline
  137 & 2.1e-02 & 1.8e+04 & 1.2e+00 & 7.9e-01 & 0.002 & 12.505 &   61 &   95 & 0.003 & 12.51 \cr\hline
  225 & 2.6e-02 & 7.1e+06 & 2.5e+00 & 8.1e-01 & 0.003 & 146.868 &   78 &   86 & 0.004 & 146.87 \cr\hline
  198 & 1.4e-01 & 3.0e+03 & 1.3e+00 & 5.8e-01 & 0.002 & 88.701 &  631 &  973 & 0.010 & 88.71 \cr\hline
  265 & 2.5e-02 & 1.4e+03 & 1.6e+00 & 5.0e-01 & 0.002 & 38.714 &  735 & 1379 & 0.010 & 38.73 \cr\hline
  100 & 4.0e-02 & 1.5e+04 & 1.0e+00 & 9.7e-01 & 0.000 & 8.944 &  114 &  132 & 0.002 &  8.95 \cr\hline
  105 & 8.0e-02 & 7.2e+02 & 1.5e+00 & 6.6e-01 & 0.000 & 14.751 &  163 &  227 & 0.002 & 14.75 \cr\hline
  135 & 3.6e-02 & 9.2e+05 & 1.0e+00 & 2.3e-01 & 0.000 & 17.484 &  164 &  850 & 0.002 & 17.49 \cr\hline
  120 & 6.0e-02 & 4.3e+08 & 3.1e-03 & 2.7e-02 & 0.001 & 24.967 &   77 &  845 & 0.003 & 24.97 \cr\hline
  100 & 7.1e-02 & 2.4e+12 & 5.8e-02 & 2.4e-01 & 0.001 & 28.979 &   19 &  108 & 0.003 & 28.98 \cr\hline
  136 & 2.6e-02 & 2.5e+05 & 1.9e+00 & 8.0e-01 & 0.001 & 19.051 &  132 &  175 & 0.003 & 19.05 \cr\hline
  100 & 4.0e-02 & 1.3e+04 & 2.3e+00 & 9.3e-01 & 0.000 & 10.800 &  339 &  385 & 0.003 & 10.80 \cr\hline
  300 & 3.5e-02 & 8.5e+05 & 2.2e+00 & 6.0e-01 & 0.002 & 581.259 & 1195 & 1983 & 0.016 & 581.28 \cr\hline
  132 & 2.4e-02 & 4.2e+11 & 2.1e+00 & 1.0e-01 & 0.000 & 22.553 &   93 & 1356 & 0.002 & 22.56 \cr\hline
\end{tabular}

}
\label{smallSuiteTwoSided}

\end{table}

\begin{table}[t]
\centering
\caption{LSQR Comparison on Small Random Matrices: \Cref{alg:Two-Sided} vs \cite{doi:10.1287/opre.2022.0592}}
\resizebox{0.7\columnwidth}{!}
{
\begin{tabular}{|ccc|cc|cc|cc|cc|} \hline
\multicolumn{3}{|c||}{Dim \& Density, \& $\kappa(A)$} & \multicolumn{2}{|c|}{Ratio of Condition Numbers} & \multicolumn{2}{|c|}{CPU Time for Prec} & \multicolumn{2}{|c|}{LSQR Iterations} & \multicolumn{2}{|c|}{Total CPU Time}\cr\cline{1-3}\cline{4-11}
  $n$ & density & $\kappa(A)$ & $\kappa(S)/\kappa(T)$ & $\omega(S)/\omega(T)$ & \Cref{alg:Two-Sided} & \cite{doi:10.1287/opre.2022.0592} & \Cref{alg:Two-Sided} & \cite{doi:10.1287/opre.2022.0592} & \Cref{alg:Two-Sided} & \cite{doi:10.1287/opre.2022.0592} \cr\hline
   50 & 2.5e-01 & 1.0e+03 & 1.2e+00 & 9.4e-01 & 0.007 & 16.583 &   18 &   18 &  0.02 & 16.59 \cr\hline
   60 & 2.4e-01 & 1.0e+03 & 5.2e+00 & 5.1e-01 & 0.001 & 11.914 &   46 &   67 &  0.01 & 11.92 \cr\hline
   70 & 2.3e-01 & 1.1e+03 & 1.4e+00 & 8.3e-01 & 0.001 & 17.909 &   22 &   27 &  0.00 & 17.91 \cr\hline
   80 & 2.2e-01 & 1.1e+03 & 2.0e+00 & 2.6e-01 & 0.002 & 13.848 &   51 &  191 &  0.00 & 13.85 \cr\hline
   90 & 2.2e-01 & 1.2e+03 & 2.2e+00 & 5.0e-01 & 0.002 & 18.674 &   43 &   90 &  0.00 & 18.68 \cr\hline
  100 & 2.1e-01 & 1.2e+03 & 5.2e+00 & 4.9e-01 & 0.003 & 27.400 &   52 &   72 &  0.00 & 27.40 \cr\hline
  110 & 2.1e-01 & 1.3e+03 & 2.2e+00 & 6.4e-01 & 0.003 & 45.059 &   52 &   52 &  0.00 & 45.06 \cr\hline
  120 & 2.0e-01 & 1.4e+03 & 2.0e+00 & 7.0e-01 & 0.002 & 37.248 &   63 &   57 &  0.00 & 37.25 \cr\hline
  130 & 2.0e-01 & 1.5e+03 & 7.3e+00 & 4.0e-01 & 0.002 & 42.239 &   96 &  140 &  0.00 & 42.24 \cr\hline
  140 & 1.9e-01 & 1.6e+03 & 1.1e+01 & 5.4e-01 & 0.002 & 55.521 &   54 &   83 &  0.00 & 55.52 \cr\hline
  150 & 1.9e-01 & 1.7e+03 & 2.4e+00 & 8.0e-01 & 0.001 & 102.427 &   46 &   35 &  0.00 & 102.43 \cr\hline
  160 & 1.9e-01 & 1.8e+03 & 6.2e+00 & 5.3e-01 & 0.002 & 90.146 &   66 &   81 &  0.00 & 90.15 \cr\hline
  170 & 1.9e-01 & 1.9e+03 & 2.4e+01 & 5.1e-01 & 0.003 & 101.889 &   84 &  111 &  0.01 & 101.89 \cr\hline
  180 & 1.8e-01 & 2.1e+03 & 9.3e+00 & 4.3e-01 & 0.001 & 128.101 &   90 &  109 &  0.00 & 128.10 \cr\hline
  190 & 1.8e-01 & 2.2e+03 & 8.1e+00 & 3.4e-01 & 0.001 & 116.720 &  113 &  219 &  0.00 & 116.72 \cr\hline
  200 & 1.8e-01 & 2.5e+03 & 9.1e+00 & 4.7e-01 & 0.001 & 147.824 &   94 &  124 &  0.00 & 147.83 \cr\hline
  210 & 1.8e-01 & 2.7e+03 & 1.2e+01 & 5.2e-01 & 0.002 & 191.344 &  121 &  113 &  0.00 & 191.35 \cr\hline
  220 & 1.8e-01 & 3.1e+03 & 4.6e+00 & 5.3e-01 & 0.002 & 293.214 &   94 &   75 &  0.00 & 293.22 \cr\hline
  230 & 1.7e-01 & 3.5e+03 & 2.2e+00 & 3.8e-01 & 0.001 & 175.652 &  105 &  508 &  0.00 & 175.66 \cr\hline
  240 & 1.7e-01 & 4.0e+03 & 6.1e+00 & 4.5e-01 & 0.001 & 287.572 &  129 &  116 &  0.00 & 287.57 \cr\hline
  250 & 1.7e-01 & 4.8e+03 & 1.1e+01 & 5.4e-01 & 0.002 & 433.302 &  107 &   92 &  0.00 & 433.30 \cr\hline
  260 & 1.7e-01 & 5.9e+03 & 4.0e+00 & 5.1e-01 & 0.002 & 384.679 &  139 &  166 &  0.01 & 384.68 \cr\hline
  270 & 1.7e-01 & 7.8e+03 & 3.6e+00 & 6.3e-01 & 0.002 & 431.242 &  121 &   75 &  0.00 & 431.24 \cr\hline
  280 & 1.7e-01 & 1.1e+04 & 1.5e+01 & 3.9e-01 & 0.002 & 498.242 &  153 &  298 &  0.01 & 498.25 \cr\hline
  290 & 1.7e-01 & 2.0e+04 & 1.6e+01 & 4.5e-01 & 0.002 & 750.635 &  178 &  128 &  0.01 & 750.64 \cr\hline
  300 & 1.7e-01 & 1.0e+05 & 1.0e+01 & 2.8e-01 & 0.002 & 751.572 &  197 &  264 &  0.01 & 751.58 \cr\hline
\end{tabular}

}
\label{smallRandomTwoSided}
\end{table}

\begin{table}[t]
\centering
\caption{LSQR Comparison on Large Suitesparse Matrices: \Cref{alg:Two-Sided} vs No Preconditioning}
\resizebox{0.8\columnwidth}{!}
{
\begin{tabular}{|ccc|cc|cc|cc|} \hline
\multicolumn{3}{|c||}{Dim \& Density, \& $\omega(A)$} & \multicolumn{2}{|c|}{Ratio of Omega \& Prec CPU Time} & \multicolumn{2}{|c|}{LSQR Iter} & \multicolumn{2}{|c|}{Total CPU Time}\cr\cline{1-3}\cline{4-9}
  $n$ & density & $\omega(A)$ & $\omega(S)/\omega(A)$ & \Cref{alg:Two-Sided} CPU & No Prec & \Cref{alg:Two-Sided} & No Prec & \Cref{alg:Two-Sided} \cr\hline
14214 & 1.3e-03 & 5.1e+01 & 3.2e-02 & 1.0e-01 & 100000 &   8963 & 3.63e+01 & 3.43e+00 \cr\hline
 4119 & 2.1e-03 & 4.1e+05 & 3.6e-06 & 1.7e-02 &   5461 &    946 & 4.98e-01 & 1.04e-01 \cr\hline
 7479 & 1.2e-03 & 2.8e+04 & 4.8e-05 & 3.0e-02 &   9393 &   1758 & 1.85e+00 & 3.81e-01 \cr\hline
 9271 & 1.4e-03 & 8.2e+06 & 1.7e-07 & 4.1e-02 &   1798 &    377 & 4.44e-01 & 1.37e-01 \cr\hline
 4562 & 6.3e-03 & 2.3e+02 & 7.4e-03 & 1.3e-02 &   6497 &   1070 & 1.11e+00 & 2.00e-01 \cr\hline
 3072 & 1.3e-02 & 1.9e+00 & 1.0e+00 & 9.8e-03 &   1358 &   1357 & 5.43e-01 & 6.06e-01 \cr\hline
11341 & 7.6e-04 & 6.0e+01 & 5.0e-02 & 3.9e-02 &  26647 &   3050 & 7.55e+00 & 9.09e-01 \cr\hline
 2048 & 2.9e-03 & 2.8e+00 & 9.3e-01 & 1.6e-03 &   4730 &   4187 & 2.10e-01 & 1.93e-01 \cr\hline
 8192 & 7.3e-04 & 2.8e+00 & 9.4e-01 & 6.1e-03 &  19233 &  16963 & 3.15e+00 & 2.79e+00 \cr\hline
14734 & 4.4e-04 & 2.3e+00 & 7.2e-01 & 4.2e-02 &  10631 &   6883 & 3.43e+00 & 2.26e+00 \cr\hline
 2283 & 9.2e-03 & 3.6e+01 & 5.6e-02 & 9.1e-03 &  17604 &   1305 & 1.15e+00 & 9.74e-02 \cr\hline
 2000 & 2.0e-03 & 2.5e+00 & 1.0e+00 & 7.9e-04 &   1578 &   1578 & 6.33e-02 & 6.54e-02 \cr\hline
 2000 & 5.9e-03 & 4.2e+00 & 5.9e-01 & 3.5e-03 &   9674 &   3842 & 4.76e-01 & 1.95e-01 \cr\hline
 5000 & 2.4e-03 & 4.2e+00 & 5.7e-01 & 6.9e-03 &  29362 &   7483 & 4.56e+00 & 1.15e+00 \cr\hline
 7000 & 1.7e-03 & 4.3e+00 & 5.7e-01 & 9.3e-03 &  50422 &  13747 & 9.93e+00 & 2.82e+00 \cr\hline
 7000 & 1.7e-03 & 4.2e+00 & 5.7e-01 & 9.4e-03 &  48882 &  11272 & 9.71e+00 & 2.23e+00 \cr\hline
 3175 & 8.4e-03 & 3.0e+01 & 7.0e-02 & 9.3e-03 &  16004 &   2157 & 2.06e+00 & 2.90e-01 \cr\hline
 5952 & 6.3e-04 & 3.7e+02 & 4.6e-03 & 1.8e-02 &  13984 &   5192 & 1.55e+00 & 5.96e-01 \cr\hline
13694 & 3.9e-04 & 1.8e+02 & 7.1e-03 & 8.0e-02 & 100000 &   2551 & 3.08e+01 & 8.70e-01 \cr\hline
 5832 & 9.0e-03 & 1.7e+00 & 1.0e+00 & 1.0e-02 &   2397 &   2390 & 6.25e-01 & 6.36e-01 \cr\hline
 9801 & 9.1e-04 & 1.5e+00 & 1.0e+00 & 2.6e-03 &   3132 &   3036 & 7.93e-01 & 7.72e-01 \cr\hline
 4000 & 5.5e-04 & 1.3e+09 & 7.5e-10 & 7.3e-03 &   7194 &      5 & 5.05e-01 & 8.63e-03 \cr\hline
 5940 & 2.4e-03 & 6.6e+00 & 4.6e-01 & 3.6e-02 & 100000 &  44539 & 1.74e+01 & 7.78e+00 \cr\hline
 4326 & 3.3e-03 & 1.1e+05 & 1.7e-05 & 1.4e-02 &  34107 &   2486 & 4.85e+00 & 3.69e-01 \cr\hline
 9800 & 6.0e-04 & 2.6e+00 & 9.9e-01 & 6.4e-03 &  18382 &  18177 & 4.37e+00 & 4.29e+00 \cr\hline
\end{tabular}

}
\label{TwoSidedLarge}
\end{table}
We see in \Cref{smallSuiteTwoSided,smallRandomTwoSided} 
that \Cref{alg:Two-Sided}
computed a preconditioner in significantly less time 
than the two-sided $\kappa$-optimal preconditioner in 
\cite{doi:10.1287/opre.2022.0592}. Moreover,
reduced $\omega$ more significantly than \cite{doi:10.1287/opre.2022.0592}
although \cite{doi:10.1287/opre.2022.0592}
reduced $\kappa$ more significantly. Interestingly enough,
\Cref{alg:Two-Sided} produced a preconditioner
that resulted in fewer (or equal) LSQR iterations than \cite{doi:10.1287/opre.2022.0592} on 20 of 22 (resp. 18 of 26)
instances presented in \Cref{smallSuiteTwoSided} (resp. \Cref{smallRandomTwoSided}).
In summary, the results presented in \Cref{smallSuiteTwoSided,smallRandomTwoSided} display that $\omega$-optimal
preconditioners are much cheaper to compute than $\kappa$-optimal
preconditioners and that the $\omega$-condition number is also better
correlated with LSQR iterations than the $\kappa$-condition number for minimizing $\|b-Ax\|$.

\Cref{TwoSidedLarge} shows that \Cref{alg:Two-Sided} still
computes effective preconditioners in negligible CPU time 
even on larger matrices with dimensions exceeding $2000$.
Moreover, \Cref{alg:Two-Sided} reduced
$\omega$ very significantly on $21$ of the $25$ instances presented in \Cref{alg:Two-Sided}.
Finally, \Cref{TwoSidedLarge} shows that \Cref{alg:Two-Sided} led to 
a significant reduction in LSQR
iterations. The total CPU time using preconditioning, i.e., \Cref{alg:Two-Sided}'s CPU+ LSQR CPU,
was smaller than the LSQR's CPU time without preconditioning
on $22$ of the $25$ instances.




\subsection{From $\kappa$-optimal $M$ to ``Improve PCG" using
$\omega$-Optimal Scaling}
\label{sect:optkappimprovomega}

We now study the effect of applying $\omega$-optimal diagonal scaling to a
$\kappa$-optimally diagonally scaled matrix $M$, where $M\succ 0$.\footnote{For the large problems 
in \Cref{table:optkappaLarge} we used the
so-called fastlinux server, cpu155.math.private, 
a Dell PowerEdge R650 with two Intel Xeon Gold 6334 8-core 3.6 GHz (Ice
Lake) 	and 256 GB.}
We find a $\kappa$-optimally diagonally scaled matrix $M\succ 0$ using
\Cref{thm:optcondkappa}, i.e., the maximal and minimal eigenvectors of M, $x_1$ and $x_n$,
satisfy~\cref{eq:optkappascaled} and are chosen
using \cref{eq:optkappascaledsigns}. 
We then choose the remaining eigenvalues to be evenly spread out in the open interval
$\lambda_i \in (\lambda_n,\lambda_1), i=2,\ldots,n-1$, with corresponding
orthonormal eigenvectors $x_i$, $i=2,\ldots n-1$, 
chosen in the orthogonal complement of span$\{x_1,x_n\}$, i.e., $x_i$ are orthogonal to both $x_1$ and $x_n$.  
We use a (sparse) QR factorization to obtain the remaining $n-2$ orthonormal eigenvectors. Therefore, the density of $M$ cannot be determined accurately in advance. The time to generate a random problem is typically very small and is hence negligible.
For a more precise description of how $M$ is constructed, see our code.



After constructing $M$, we then apply $\omega$-optimal diagonal 
preconditioning to $M$ to implicitly get
$J$ as described above, i.e., $J=D^{1/2}MD^{1/2}$ where $D=\Diag(1./\diag(M))$.  
We then compare the number of PCG iterations of solving $Mx=b$ and $J(D^{-1/2}x)=D^{1/2}b$. 
Note that only the vector $b$ and the matrices $A$ and $D$ are needed as input
for {\sc MATLAB}'s built-in PCG function. 
Each linear system that we consider in our experiments is 
solved by PCG using 5 different initial points so the iteration 
counts and runtimes reported for each experiment are averaged 
across these 5 runs. For the computational results showing the 
comparison between $A$ and $J$, see \Cref{table:optkappaMedium,table:optkappaLarge}.

\begin{table}[t]
\tiny
\centering
\caption{PCG tol. $1e\!-\!7$; Medium, PC;
$A, \kappa$-opt \underline{\bf VS} $J, \omega$-opt of $A$}
\resizebox{0.6\columnwidth}{!}
{
\begin{tabular}{|cc|cc|cc|cc|} \hline
\multicolumn{2}{|c||}{Dim \& Density} & \multicolumn{2}{|c|}{Ratios in conds $(J,A)$ } & \multicolumn{2}{|c|}{$J$: $\omega$-opt of $A$} &\multicolumn{2}{|c|}{Ratios A/J}\cr\cline{1-2}\cline{2-8}
   $n$ & density &$\kappa(J)/\kappa(A)  $ & $\omega(J)/\omega(A)  $ & iters &     cpu & iters & cpu \cr\hline
  5000 & 9.40e-03 &2.98e+00 & 7.368e-01 & 19.4 & 3.927e-03 & 25.2 & 13.9 \cr\hline
 10000 & 7.44e-03 &3.31e+00 & 7.362e-01 & 18.8 & 6.698e-03 & 36.6 & 30.5 \cr\hline
 15000 & 6.02e-03 &2.64e+00 & 7.365e-01 & 17.0 & 1.418e-02 & 41.6 & 39.0 \cr\hline
 20000 & 4.57e-03 &3.56e+00 & 7.363e-01 & 20.6 & 2.861e-02 & 41.6 & 41.1 \cr\hline
 25000 & 3.47e-03 &3.71e+00 & 7.370e-01 & 20.6 & 8.006e-02 & 30.9 & 29.1 \cr\hline
 30000 & 2.42e-03 &3.52e+00 & 7.362e-01 & 20.2 & 4.327e-02 & 48.0 & 43.0 \cr\hline
 35000 & 1.56e-03 &2.77e+00 & 7.371e-01 & 16.0 & 4.174e-02 & 36.5 & 36.1 \cr\hline
 40000 & 8.93e-04 &2.89e+00 & 7.364e-01 & 18.0 & 2.535e-02 & 46.7 & 38.9 \cr\hline
 45000 & 4.15e-04 &3.93e+00 & 7.370e-01 & 21.8 & 1.820e-02 & 28.5 & 27.0 \cr\hline
 50000 & 4.12e-04 &4.12e+00 & 7.361e-01 & 24.0 & 2.679e-02 & 49.5 & 43.7 \cr\hline
\end{tabular}

}
\label{table:optkappaMedium}

\end{table}

\begin{table}[t]
\tiny
\centering
\caption{PCG tol. $1e\!-\!7$; Large, Linux;
$A, \kappa$-opt \underline{\bf VS} $J, \omega$-opt of $A$}
\resizebox{0.6\columnwidth}{!}
{
\begin{tabular}{|cc|cc|cc|cc|} \hline
\multicolumn{2}{|c||}{Dim \& Density} & \multicolumn{2}{|c|}{Ratios in conds $(J,A)$ } & \multicolumn{2}{|c|}{$J$: $\omega$-opt of $A$} &\multicolumn{2}{|c|}{Ratios A/J}\cr\cline{1-2}\cline{2-8}
   $n$ & density &$\kappa(J)/\kappa(A)  $ & $\omega(J)/\omega(A)  $ & iters &     cpu & iters & cpu \cr\hline
 60000 & 9.14e-03 &4.27e+00 & 7.366e-01 & 18.2 & 8.894e-01 & 42.0 & 36.5 \cr\hline
 65000 & 7.78e-03 &4.57e+00 & 7.362e-01 & 21.0 & 9.985e-01 & 52.9 & 47.9 \cr\hline
 70000 & 6.54e-03 &3.93e+00 & 7.365e-01 & 17.0 & 8.110e-01 & 48.3 & 42.9 \cr\hline
 75000 & 5.44e-03 &4.18e+00 & 7.363e-01 & 18.0 & 8.253e-01 & 56.4 & 49.2 \cr\hline
 80000 & 4.36e-03 &4.08e+00 & 7.369e-01 & 17.0 & 7.202e-01 & 38.3 & 33.8 \cr\hline
 85000 & 3.47e-03 &3.91e+00 & 7.362e-01 & 18.0 & 7.062e-01 & 60.9 & 54.0 \cr\hline
 90000 & 2.63e-03 &3.92e+00 & 7.371e-01 & 17.6 & 6.050e-01 & 34.2 & 29.8 \cr\hline
 95000 & 1.91e-03 &4.12e+00 & 7.364e-01 & 18.0 & 5.092e-01 & 48.0 & 41.6 \cr\hline
100000 & 1.31e-03 &4.13e+00 & 7.369e-01 & 18.2 & 4.029e-01 & 34.4 & 29.8 \cr\hline
105000 & 8.09e-04 &4.41e+00 & 7.361e-01 & 22.0 & 3.342e-01 & 58.8 & 50.3 \cr\hline
110000 & 4.29e-04 &3.58e+00 & 7.363e-01 & 17.0 & 1.709e-01 & 55.9 & 45.2 \cr\hline
115000 & 1.74e-04 &3.97e+00 & 7.369e-01 & 20.6 & 1.134e-01 & 31.6 & 23.7 \cr\hline
120000 & 3.32e-05 &3.14e+00 & 7.362e-01 & 20.2 & 3.648e-02 & 55.0 & 26.0 \cr\hline
\end{tabular}

}
\label{table:optkappaLarge}
\end{table}


We now give several remarks about the results
presented in \Cref{table:optkappaMedium,table:optkappaLarge}. Interestingly, on every
instance tested applying the Jacobi or $\omega$-optimal diagonal preconditioning to $A$ 
led to significant improvement in PCG's performance for solving the linear system
even though $\kappa(A)<\kappa(J)$. On average, across the 10 medium sized test instances 
ppresented in \Cref{table:optkappaMedium},
PCG performed $38.5$
times more number of iterations on the $\kappa$-optimal linear system
than it did on $\omega$-optimally scaled linear system. Moreover, PCG 
also on average
took $34.2$ times longer (in terms of CPU time) on the $\kappa$-optimal linear system than it did on the $\omega$-optimally scaled 
linear system.

Jacobi preconditioning led to even more significant improvements on the larger test instances
presented in \Cref{table:optkappaLarge}. On average, across the $13$ large test instances
in \Cref{table:optkappaLarge}, PCG performed $47.4$ times more number of iterations on the $\kappa$-optimal linear system
than it did on $\omega$-optimally scaled linear system. Moreover, PCG also on average
took $39.3$ times longer (in terms of CPU time) on the $\kappa$-optimal linear system than it did on the $\omega$-optimally scaled 
linear system.

\section{Conclusion}
\label{sect:concl}
In this paper, we studied optimal diagonal preconditioning 
through the lens of two condition numbers: the classical $\kappa$ and the 
averaging-based $\omega$-condition number. On the theoretical side, we introduced an 
affine-based pseudoconvex reformulation of the $\kappa$-optimal 
preconditioning problem, yielding simple optimality conditions and 
enabling efficient optimization over an $n$-dimensional vector. 
Moreover, since the $\kappa$-condition number is positively homogeneous 
of degree zero, the associated minimization problem is Hadamard ill-posed. 
To address this, we introduce a reformulation that removes this homogeneity, 
leading to improved computational stability in practice. 
Building on this formulation, we develop a highly efficient subgradient 
method with convergence guarantees that significantly outperforms existing SDP-based approaches in both scalability and accuracy.
For example, \Cref{table:OptKappaSuiteSparse} shows 
that \Cref{Subgrad V} (resp. \Cref{alg:GDeplustVv}) is, on 
average, 77.48 (resp. 356.22) times faster 
than \cite{doi:10.1287/opre.2022.0592} when 
minimizing $\kappa$ on relatively small SuiteSparse matrices. 
Our methods also substantially outperform \cite{gao2023scalable} 
in terms of runtime.
In addition to these speedups, our subgradient methods 
consistently achieve significantly greater reductions 
in $\kappa$ than both \cite{doi:10.1287/opre.2022.0592} 
and \cite{gao2023scalable}, often reducing it by more than half. 
Finally, our algorithms scale to large problem instances with
 hundreds of thousands of variables, solving them within minutes,
whereas existing 
methods \cite{doi:10.1287/opre.2022.0592,gao2023scalable} 
struggle to handle such instances within a reasonable time.


In parallel, we provided explicit and 
unified characterizations of $\omega$-optimal diagonal and block-diagonal 
preconditioners, showing that many classical schemes such as Jacobi scaling, 
row/column normalization, and matrix balancing arise naturally as $\omega$-optimal 
solutions or stationary points of the $\omega$-optimal
preconditioning problem. These results offer a new perspective on widely 
used preconditioning techniques and, to the best of our knowledge, 
constitute the first comprehensive comparison of $\kappa$- and $\omega$-based 
optimality conditions.

Our numerical results further reveal a clear and practically important 
message: while $\kappa$-optimal preconditioners reduce the worst-case 
condition number more aggressively, $\omega$-optimal preconditioners are 
substantially cheaper to compute and more strongly correlated with the 
performance of iterative methods such as PCG and LSQR. Moreover, 
applying the $\omega$-optimal diagonal scaling to linear systems that are already $\kappa$-optimally 
preconditioned leads to further significant improvements in PCG's performance.

Overall, our findings suggest that $\omega$-based preconditioning provides a 
computationally efficient and practically superior alternative to $\kappa$-based 
approaches for large-scale problems, and highlight the importance of moving 
beyond worst-case conditioning when designing preconditioners.

\section*{Acknowledgments}

The authors acknowledge the use of AI tools in revising the grammar and writing of this paper.

\newpage
\appendix

\section{Assumptions and Technical Results from \cite{kiwiel2001convergence}}\label{Kiweil assumptions}
Let $f:\mathbb R^{n}\rightarrow \bar {\mathbb R}:=\mathbb R \cup \left\{-\infty,\infty\right\}$ be an extended real-valued 
function with \textdef{domain $\bar D$}. 
The following minimization problem is considered in \cite{kiwiel2001convergence}:
\begin{equation}\label{Kiwiel optimization problem}
f_{*}:=\inf\left\{f(x):x\in X\right\},
\end{equation}
with the following assumptions on $f$ and $X$:
\index{$\bar D$, domain}
\begin{itemize}
\item[\textbf{A1}]: $\Int \bar D$ is nonempty and convex.
\item[\textbf{A2}]: ${\overline{\lim}}_{t\downarrow 0} f(x+t(y-x))\leq f(x)$, $\forall x \in \bar D, y \in \Int \bar D$. 
\item[\textbf{A3}]: $f$ is upper semicontinuous (usc) on $\Int \bar D$, i.e., $f(x)=\lim_{\epsilon \downarrow 0}\sup_{B(x,\epsilon)}f, \quad \forall x \in \Int\bar D$
where $B(x,\epsilon):=\left\{y:\|y-x\|\leq \epsilon \right\}$.
\item[\textbf{A4}]: $f$ is quasiconvex  on $\Int\bar D$, i.e., the set $\{x \in \Int\bar D: f(x)\leq \alpha\}$ is convex $\forall \alpha \in \mathbb R$.
\item[\textbf{A5}]: The constraint set $X \subset \mathbb R^{n}$ is closed convex, and $X\cap \Int\bar D\neq \emptyset$.
\end{itemize}

The following lemma from \cite{kiwiel2001convergence}
discusses that fractional programs with certain structures are quasiconvex and also
provides one characterization of their quasisubgradients.

\begin{lemma}[{\cite[Lemma 4]{kiwiel2001convergence}}]\label{Lemma Subgrad Sublevel}
Suppose $f(x)=a(x)/b(x)$ for all $x \in \Int\bar D$, where $a$ is a convex function, $b$ is finite
and positive on $\Int\bar D$, $\Int\bar D$ is convex, and one of
the following conditions holds:
\begin{itemize}
	\item[(a)] $b$ is affine;
	\item[(b)] $a$ is nonnegative on $\Int\bar D$ and $b$ is concave;
	\item[(c)] $a$ is nonpositive on $\Int\bar D$ and $b$ is convex.
\end{itemize}
Then $f$ is quasiconvex on $\Int\bar D$ and for each $x \in \Int\bar D$, if $\alpha:=f(x)$ is finite then $a-\alpha b$ is convex and $\partial[a-\alpha b](x)\subset {\bar{\partial}}^{\circ} f(x)$.
\end{lemma}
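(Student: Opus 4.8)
The plan is to establish the three assertions — quasiconvexity of $f$, convexity of $a-\alpha b$, and the inclusion $\partial[a-\alpha b](x)\subseteq{\bar{\partial}}^{\circ}f(x)$ — in that order, all resting on one elementary observation: since $b$ is finite and positive on $\Int\bar D$, for every $\gamma\in\R$ and every $y\in\Int\bar D$ we have $f(y)\le\gamma\iff a(y)-\gamma b(y)\le 0$ and $f(y)<\gamma\iff a(y)-\gamma b(y)<0$. Thus each sublevel set of $f$ is exactly the zero-sublevel set of an auxiliary function $a-\gamma b$, and these auxiliary functions are the ones appearing in the statement.

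For quasiconvexity and the convexity of $a-\alpha b$ I would run the case split (a)--(c). In case (a), $b$ affine makes $a-\gamma b$ convex for \emph{every} $\gamma$, so every sublevel set $\{a-\gamma b\le 0\}$ is convex; in particular $a-\alpha b$ is convex at $\gamma=\alpha$. In case (b), if $\gamma\ge 0$ then $-\gamma b$ is convex (as $b$ is concave), hence $a-\gamma b$ is convex and $\{a-\gamma b\le 0\}$ is convex; if $\gamma<0$ then $a-\gamma b=a+|\gamma|b>0$ on $\Int\bar D$ because $a\ge 0$ and $b>0$, so the sublevel set is empty and still convex. Since $a\ge 0$ and $b>0$ force $\alpha=a(x)/b(x)\ge 0$, the first branch applies at $\gamma=\alpha$, so $a-\alpha b$ is convex. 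Case (c) is symmetric: for $\gamma\le 0$, $a-\gamma b$ is convex; for $\gamma>0$, $a-\gamma b<0$ on all of $\Int\bar D$ (using $a\le 0$, $b>0$), so the sublevel set is the whole convex set $\Int\bar D$; and $a\le 0$ forces $\alpha\le 0$, so again $a-\alpha b$ is convex. In every case all sublevel sets of $f$ are convex, i.e. $f$ is quasiconvex on $\Int\bar D$, and $a-\alpha b$ is convex whenever $\alpha=f(x)$ is finite.

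For the subdifferential inclusion I would fix $x\in\Int\bar D$ with $\alpha:=f(x)$ finite (so $a(x)$ is finite as well, since $b(x)\in(0,\infty)$), put $h:=a-\alpha b$, which is convex by the previous step, and record the normalization $h(x)=a(x)-\alpha b(x)=a(x)-\tfrac{a(x)}{b(x)}b(x)=0$. Now take any $g\in\partial h(x)$ and any $y\in\bar S(x)$, i.e. $y\in\Int\bar D$ with $f(y)<f(x)=\alpha$. The global subgradient inequality for the convex function $h$ gives $\langle g,y-x\rangle\le h(y)-h(x)=a(y)-\alpha b(y)$, and $f(y)<\alpha$ together with $b(y)>0$ gives $a(y)-\alpha b(y)<0$. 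Hence $\langle g,y-x\rangle<0$ for all $y\in\bar S(x)$, which is exactly the defining property of ${\bar{\partial}}^{\circ}f(x)$ in \eqref{Quasisubdiff}; degenerate cases ($\bar S(x)=\emptyset$) are covered vacuously.

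There is no deep obstacle. The only points needing care are the sign bookkeeping in cases (b) and (c): one must notice that the hypotheses $a\ge 0$ (resp.\ $a\le 0$) push $\alpha$ into precisely the range of $\gamma$ for which $a-\gamma b$ is convex, and that the complementary range of $\gamma$ yields a degenerate (empty, resp.\ full) sublevel set rather than a genuinely nonconvex one. Minor hygiene: interpret $\partial$ throughout as the Fenchel subdifferential of the convex function $a-\alpha b$ (so the subgradient inequality is available globally), and use that $a$ is finite on the locus where $f$ is finite so that $h(x)=0$ is a legitimate real identity.
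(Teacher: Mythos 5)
Your proof is correct. The paper itself gives no proof of this lemma --- it is imported verbatim as Lemma~4 of \cite{kiwiel2001convergence} --- and your argument is precisely the standard one underlying that result: translate each sublevel set $\{f\le\gamma\}$ into the zero-sublevel set of $a-\gamma b$ using $b>0$, check via the case split (a)--(c) that $a-\gamma b$ is convex for the relevant range of $\gamma$ (with the complementary range giving an empty or full, hence still convex, sublevel set), and then derive the inclusion $\partial[a-\alpha b](x)\subset{\bar{\partial}}^{\circ}f(x)$ from the subgradient inequality for $h=a-\alpha b$ together with $h(x)=0$ and $h(y)<0$ for $y\in\bar S(x)$. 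The sign bookkeeping in cases (b) and (c), which is the only delicate point, is handled correctly.
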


The following lemma can be found in \cite{kiwiel2001convergence} and is useful for showing that assumptions (A1)-(A4) hold for our set-up in 
\Cref{eq:DClosedSet} since $\kappa(\cDo(d))$ is the ratio of a convex and concave function that is positive on $\Omega$.

\begin{lemma}[{\cite[Remark 2]{kiwiel2001convergence}}]\label{Remark Kiwiel Fractional}
Suppose $a$ and $\tilde b:=-b$ are proper convex functions on $\Int \bar D^{a} \cap \Int \bar D^{b}$, $a$
is nonnegative on the (open and convex set) $C:=\left\{y \in \Int \bar D^{a} \cap \Int \bar D^{b}: b(y)>0 \right\} \neq \emptyset$,
and
\[
f(x):=
\begin{cases}
a(x)/b(x), \quad \text{ if } x\in C,\\
\sup_{y\in C} \overline{\lim}_{t\downarrow 0} f(x+t(y-x)),  \quad \text{ if } x\in \textrm{bd } C,\\
\infty,  \quad \text{ if } x\notin \clo C
\end{cases}
\]
where \textdef{$\clo C$} denotes the closure,
\textrm{bd} $C$ denotes the boundary of $C$, and $\bar D^{a}$ and $\bar D^{b}$ 
denote the domains of $a$ and $b$, respectively. Then assumptions \textbf{A1}-\textbf{A4} hold with $\Int \bar D=C$.
\end{lemma}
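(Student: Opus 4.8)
The plan is to verify the four structural conditions \textbf{A1}–\textbf{A4} in turn, taking $\Int\bar D$ to be the set $C$, and to reduce each to standard continuity properties of convex/concave functions together with \Cref{Lemma Subgrad Sublevel}. First I would dispatch \textbf{A1}: $C$ is nonempty by hypothesis, and writing $U:=\Int\bar D^{a}\cap\Int\bar D^{b}$, the convex function $\tilde b=-b$ is finite on the open convex set $U$, hence continuous there; therefore $b$ is continuous on $U$ and the superlevel set $\{y\in U:b(y)>0\}$ is open. Convexity of $C$ follows since $b$ is concave on $U$, so $\{b>0\}$ is convex, and $C$ is its intersection with the convex set $U$. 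Thus $C=\Int\bar D$ is nonempty, open and convex.

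Next, on $C$ the function $f=a/b$ is the quotient of $a$ (convex and finite on the open set $C$, hence continuous) by $b$ (continuous and strictly positive on $C$), so $f$ is continuous on $\Int\bar D=C$; in particular $f$ is upper semicontinuous on $\Int\bar D$, which is \textbf{A3}. For \textbf{A2} I would split along the three cases in the definition of $f$. If $x\in C$, continuity of $f$ gives $\overline{\lim}_{t\downarrow0}f\bigl(x+t(y-x)\bigr)=f(x)$. If $x\in\textrm{bd}\,C$ and $y\in\Int\bar D=C$, then $x+t(y-x)\in C$ for all small $t>0$ since $C$ is open and convex and $y\in C$; hence the values are finite and $\overline{\lim}_{t\downarrow0}f\bigl(x+t(y-x)\bigr)\le\sup_{y'\in C}\overline{\lim}_{t\downarrow0}f\bigl(x+t(y'-x)\bigr)=f(x)$ directly from the definition of $f$ on $\textrm{bd}\,C$. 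If $x\notin\clo C$, then $f(x)=\infty$ and the inequality is trivial. Hence \textbf{A2} holds for all $x\in\bar D$, $y\in\Int\bar D$.

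Finally, \textbf{A4} asks that $f$ be quasiconvex on $\Int\bar D=C$, which is exactly \Cref{Lemma Subgrad Sublevel} in case (b): on $C$ we have $f=a/b$ with $a$ convex, $b$ finite, positive, and concave on the open convex set $C$, and $a$ nonnegative on $C$. Equivalently, one can check sublevel sets directly: for $\alpha<0$ the set $\{x\in C:f(x)\le\alpha\}$ is empty because $f\ge0$ on $C$, while for $\alpha\ge0$ it equals $\{x\in C:a(x)+\alpha\tilde b(x)\le0\}$, a sublevel set of the convex function $a+\alpha\tilde b$, hence convex.

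The step I expect to require the most care is \textbf{A2} at boundary points of $C$: one must invoke the piecewise definition of $f$ on $\textrm{bd}\,C$ consistently and check that a segment emanating from a boundary point toward a point of $C$ re-enters $C$ immediately, so that the relevant $\overline{\lim}$ is taken over finite values and is dominated by the supremum defining $f(x)$. Everything else is a routine consequence of continuity of convex and concave functions on the interiors of their domains, combined with \Cref{Lemma Subgrad Sublevel}.
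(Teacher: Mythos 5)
Your verification is correct. Note that the paper does not prove this statement at all --- it is imported verbatim as Remark~2 of \cite{kiwiel2001convergence} --- so there is no in-paper argument to compare against; your direct check of \textbf{A1}--\textbf{A4} (openness and convexity of $C$ from continuity of $b$ on $\Int\bar D^{a}\cap\Int\bar D^{b}$ and concavity of its superlevel sets; continuity of $a/b$ on $C$ for \textbf{A3}; the line-segment principle plus the boundary definition of $f$ for \textbf{A2}; and case (b) of \Cref{Lemma Subgrad Sublevel}, or equivalently convexity of the sublevel sets of $a+\alpha\tilde b$, for \textbf{A4}) is the standard and complete route. The only step left implicit is the identification $\Int\bar D=C$ itself: since $C\subseteq\bar D\subseteq\clo C$ and $C$ is open and convex, $\Int\clo C=C$ gives $\Int\bar D=C$, which is worth one line but follows from facts you have already established.
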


\section{An Efficient Line-Search Subgradient Method}
\label{Efficient_Subgradient_Method}
This section presents an efficient line-search subgradient method
for minimizing $\kappa(v):=\kappa(\cVo(v))$ where $\cVo(v)$ is as in \Cref{non homogoenous function}.

Let $w = e+Vv \in \Rnpp$ correspond to the current iterate $v$. Also, let $\sigma$ be
a small scalar between $0$ and $1$ and let $\Delta w =
V\Delta v$ where $\Delta v=-\nabla\kappa(v)$ where $\kappa(v)$ is as in \Cref{Not Homogoenous Formulation}.
From~\Cref{lem:KposdefInertia}, we can use a ratio test and guarantee
positive definiteness from
$w + tV\Delta v  = w +t\Delta w> \sigma w$, or equivalently
$t\Delta w> (\sigma-1) w$. Therefore, we conclude that the maximum step
with safeguarding, so as not to get too close to the boundary, is
\begin{equation}
\label{eq:tmax}
t_{\max}(\sigma) \cong t_{\max} :=  \min\left\{
\frac {(\sigma-1)w_i}{\Delta w_i} : \Delta w_i < 0 \right\}  > 0.
\end{equation}

Recall the gradient $\nabla \kappa(v) =
\nabla(\kappa \circ \cVo(v))$ in \Cref{cor:optandgradcVo}.
Our
line search in \Cref{alg:GDeplustVv} maintains: 
\begin{linesrch}[for \Cref{alg:GDeplustVv}]
\label{ln:linesalgGDeff}
Backtrack and maintain:
(i) $t \leq t_{\max}$
from \cref{eq:tmax}; (ii) monotonic nonincrease of
the objective $\kappa(v+t\Delta v)$; and nonpositivity of the directional
derivative $\nabla \kappa(v+t\Delta v)^T\Delta v\leq 0$. 
\end{linesrch}
\Cref{alg:GDeplustVv} is presented below. 
 \begin{algorithm}[H]
\caption{An Efficient Line-Search Subgradient Method for Minimizing $\kappa(v)$.}
\label{alg:GDeplustVv}
\begin{algorithmic}[1]
\REQUIRE A symmetric positive definite matrix $A \succ 0$, a max iteration count
mainmaxiter,
stopping tolerances \textrm{maintoler}$>0$ and \textrm{linesrchtoler}$>0$, and a small
scalar $0<\sigma\ll 1$.
\STATE set $k \gets 0$, $v_1=0 \in \mathbb R^{n-1}$, $w_1=e_n \in \mathbb R^{n}$, and $t_k \gets \infty$;
\STATE compute a min eigenpair $(\lambda_n^{1},x_n^{1})$ and max eigenpair $(\lambda_1^{1},x_1^{1})$ of $A$;
\STATE compute $\kappa(v_1)$ and $\nabla \kappa(v_1)$ according to \Cref{Not Homogoenous Formulation} and \Cref{gradient of V}, respectively;
\STATE set \textrm{relnormg}$=\|\nabla \kappa(v_1)\|^2/(1+\kappa(v_1 )^2)$;
\WHILE{\textrm{relnormg} $>$ \textrm{maintoler} \,\&\,  $t_k>$ \textrm{linesrchtoler} \,\&\,
$k\leq$ \textrm{mainmaxiter}} 
\STATE  set $k\leftarrow k+1$;
\STATE set $\Delta v_{k}\leftarrow -\nabla \kappa (v_k)$;
\STATE 
use \cref{eq:tmax} with $\Delta w:=V\Delta v_{k}$ and $w:=w_k$ to evaluate
$t_{\max}(\sigma)$;
\STATE
\label{LineSearchLabe}
perform~\Cref{ln:linesalgGDeff} to find $t_k$;
\STATE set
$v_{k+1}=v_k+t_k \Delta v_k$ and $w_{k+1}=e_n+Vv_{k+1}$;
\STATE compute min eigenpair $(\lambda_n^{k+1},x_n^{k+1})$ and max eigenpair $(\lambda_1^{k+1},x_1^{k+1})$ of $A\Diag(w_{k+1})$;
\STATE compute $\kappa(v_{k+1})$ and $\nabla \kappa(v_{k+1})$ according
to  \Cref{Not Homogoenous Formulation} and \Cref{gradient of V}, respectively;
\STATE update \textrm{relnormg}$=\|\nabla \kappa(v_{k+1})\|^2/(1+\kappa(v_{k+1} )^2)$;

\ENDWHILE  (main outer loop)

\ENSURE $\hat D:=\Diag(w_{k+1})$.

\end{algorithmic}
\end{algorithm}
In practice, we take \textrm{linesearchtol}
to be $10^{-7}$ and \textrm{maintoler} to be $10^{-4}$.

\section{List of SuiteSparse Matrices Used in Experiments}\label{Appendix SuiteSparse}
\Cref{table:OptKappaSuiteSparse} considers the following matrices:
\begin{itemize}[label={}]
	\item ``bcsstk08.mat'', ``bcsstk13.mat'', ``bcsstk21.mat'', ``bcsstk23.mat'', ``bcsstk24.mat''
	\item ``bcsstk26.mat'', ``bcsstk28.mat'', ``bcsstk34.mat'', ``494\_bus.mat'', ``662\_bus.mat''
	\item  ``nasa1824.mat'', ``nasa2146.mat'', ``nasa2910.mat'', ``nos1.mat'', ``nos2.mat'', ``nos4.mat''
	\item  ``nos5.mat'', ``nos7.mat''
\end{itemize}

\Cref{table:LargeSuite} considers the following matrices:
\begin{itemize}[label={}]
	\item ``Pres\_Poisson.mat'', ``bcsstk25.mat'', ``bcsstm25.mat'', ``gyro\_m.mat''
	\item ``gyro.mat'', ``bcsstk36.mat'', ``wathen100.mat'', ``wathen120.mat'', ``minsurfo.mat''
	\item  ``gridgena.mat'', ``G2\_circuit.mat''
\end{itemize}

\Cref{smallSuiteTwoSided} considers the following matrices:
\begin{itemize}[label={}]
	\item ``08blocks.mat'', ``arc130.mat'', ``bwm200.mat'', ``ck104.mat'', ``ex1.mat''
	\item ``football.mat'', ``gre\_185.mat'', ``gre\_216a.mat'', ``impcol\_a.mat'', ``impcol\_c.mat''
	\item  ``impcol\_e.mat'', ``jazz.mat'', ``lshp\_265.mat'', ``olm100.mat'', ``polbooks.mat''
	\item   ``rajat11.mat'', ``robot.mat'', ``rotor1.mat'', ``rw136.mat'', ``tub100.mat''
	\item   ``utm300.mat'', ``west013.mat''
\end{itemize}

\Cref{TwoSidedLarge} considers the following matrices:
\begin{itemize}[label={}]
	\item ``airfold\_2d.mat'', ``c-24.mat'', ``c-36.mat'', ``c-39.mat'', ``cavity21.mat''
	\item ``conf5\_-4x4-18.mat'', ``coupled.mat'', ``delaunay\_n11.mat'', ``delaunay\_n13.mat'', ``epb1.mat''
	\item  ``ex24.mat'', ``G34.mat'', ``G36.mat'', ``G59.mat'', ``G63.mat''
	\item   ``G64.mat'', ``garon1.mat'', ``Hamrle2.mat'', ``jan99jac040sc.mat'', ``Na5.mat''
	\item   ``t2d\_q9.mat'', ``tols4000.mat'', ``utm5940.mat'', ``viscoplastic1.mat''
	\item   ``whitaker3.mat''
\end{itemize}



\clearpage
\phantomsection
\bibliographystyle{siamplain}
\bibliography{master.bib,psd.bib,edm.bib,bjorBOOK.bib,davidt.bib,leo.bib}
\label{bib:bibl}

\end{document}